\documentclass{amsart}
\usepackage{amssymb}

\usepackage[all]{xy}

\textheight=640pt
\textwidth=480pt
\oddsidemargin=-10pt
\evensidemargin=-10pt
\topmargin=-20pt

\newtheorem{theorem}{Theorem}[section]
\newtheorem{claim}[theorem]{Claim}

\newtheorem{corollary}[theorem]{Corollary}
\newtheorem{lemma}[theorem]{Lemma}

\theoremstyle{definition}

\theoremstyle{definition}
\newtheorem{remark}[theorem]{Remark}

\newcommand{\IN}{\mathbb N}
\newcommand{\IR}{\mathbb R}

\newcommand{\IZ}{\mathbb Z}
\newcommand{\IP}{\mathbb P}

\newcommand{\e}{\varepsilon}
\newcommand{\w}{\omega}

\newcommand{\Ra}{\Rightarrow}
\newcommand{\Tau}{\mathcal T}

\title[A quantitative generalization of Prodanov-Stoyanov Theorem]{A quantitative generalization of Prodanov-Stoyanov Theorem\\ on minimal Abelian topological groups}
\author{Taras Banakh}
\address{T.~Banakh: Ivan Franko National University of Lviv
(Ukraine) and\newline\indent Jan Kochanowski University in Kielce
(Poland)}
\email{t.o.banakh@gmail.com}
\subjclass{22A05, 22A15, 54D30, 54E15}
\keywords{Abelian topological group, minimal topological group, paratopological group, topological semigroup, topological group of compact exponent}
\dedicatory{To the memory of Ivan Prodanov (1935--1985)}

\begin{document}
\begin{abstract} A topological group $X$ is defined to have {\em compact exponent} if for some number $n\in\IN$ the set $\{x^n:x\in X\}$ has compact closure in $X$. Any such number $n$ will be called a {\em compact exponent} of $X$. Our principal result states that a complete Abelian topological group $X$ has compact exponent (equal to $n\in\IN$) if and only if for any injective continuous homomorphism $f:X\to Y$ to a topological group $Y$ and every $y\in \overline{f(X)}$ there exists a positive number $k$ (equal to $n$) such that $y^k\in f(X)$. This result has many interesting implications: (1) an Abelian topological group is compact if and only if it is complete in each weaker Hausdorff group topology;
(2) each minimal Abelian topological group is precompact (this is the famous Prodanov-Stoyanov Theorem); (3) a topological group $X$ is complete and has compact exponent if and only if it is closed in each
Hausdorff paratopological group containing $X$ as a topological subgroup (this confirms an old conjecture of Banakh and Ravsky).
\end{abstract}
\maketitle

\section{Introduction}

In this paper we prove a theorem, which can be considered as a quantitative version of the fundamental Prodanov-Stoyanov Theorem on the precompactness of minimal Abelian topological groups. All topological groups considered in this paper are Hausdorff.

We recall that a topological group $X$ is {\em precompact} if its completion in the two-sided uniformity is compact. This happens if and only if $X$ is {\em totally bounded} in the sense that for any neighborhood $U\subset X$ of the unit there exists a finite subset $F\subset X$ such that $X=FU$. For a subset $A$ of a topological space $X$ by $\bar A$ we denote the closure of $A$ in $X$.

We shall say that a topological group $X$ has {\em compact exponent} if for some number $n\in\IN$ the set $nX=\{x^n:x\in X\}$ has compact closure $\overline{nX}$ in $X$. In this case, the number $n$ is called  {\em a compact exponent} of $X$. Observe that a topological group is compact if and only if 1 is its compact exponent.  For the first time the concept of compact exponent (without naming) has appeared in Banakh and Ravsky  \cite{BaR2001}.

By a {\em powertopological semigroup} we understand a semigroup $S$ endowed with a Hausdorff topology such that for any $a,b\in S$ and $n\in\IN$ the map $S\to S$, $x\mapsto ax^nb$, is continuous. It is clear that each topological semigroup is powertopological and each powertopological semigroup is {\em semitopological} which means that for any $a,b\in S$ the shift $S\to S$, $x\mapsto axb$, is continuous.

This paper contains a unique principal result:

\begin{theorem}\label{t:main} Let $n\in\IN$ be any number. For a complete Abelian topological group $X$ the following conditions are equivalent:
\begin{enumerate}
\item $X$ has compact exponent (equal to $n$);
\item for any injective continuous homomorphism $f:X\to Y$ to a topological group $Y$ and every point $y\in \overline{f(X)}\subset Y$ there exists a number $k\in\IN$ (equal to $n$) such that $y^k\in f(X)$;
\item for any continuous homomorphism $f:X\to Y$ to a powertopological semigroup $Y$ and every point $y\in \overline{f(X)}\subset Y$ there exists a number $k\in\IN$ (equal to $n$) such that $y^k\in f(X)$.
\end{enumerate}
\end{theorem}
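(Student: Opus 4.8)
The plan is to prove the cycle $(1)\Rightarrow(3)\Rightarrow(2)\Rightarrow(1)$, in which the first implication is a soft compactness argument, the second is essentially automatic, and the third carries all the weight. The implication $(3)\Rightarrow(2)$ needs no work: every topological group is in particular a powertopological semigroup, so the hypothesis of $(3)$ specializes verbatim to the hypothesis of $(2)$, and with the same value of $k$ (so the parenthetical ``equal to $n$'' is preserved).

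For $(1)\Rightarrow(3)$ I would argue with nets. Let $f:X\to Y$ be a continuous homomorphism into a powertopological semigroup, fix $y\in\overline{f(X)}$, and choose a net $(x_\alpha)$ in $X$ with $f(x_\alpha)\to y$. Put $e=f(e_X)$, where $e_X$ is the unit of $X$. Since $Y$ is semitopological and Hausdorff, the relations $e\,f(x_\alpha)=f(x_\alpha)=f(x_\alpha)\,e$ pass to the limit and yield $e y=y=y e$; hence $e$ is a two-sided unit on all of $\overline{f(X)}$, and by associativity $e\,y^n e=y^n$. Now the compact exponent enters: because $\overline{nX}$ is compact, the net $(x_\alpha^n)$, which lies in $nX$, has a subnet converging to some $z\in\overline{nX}\subseteq X$, and along it $f(x_\alpha)^n=f(x_\alpha^n)\to f(z)$ by continuity of $f$. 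On the other hand the powertopological axiom with $a=b=e$ makes $w\mapsto e\,w^n e$ continuous, so $f(x_\alpha)\to y$ forces $f(x_\alpha)^n=e\,f(x_\alpha)^n e\to e\,y^n e=y^n$. By Hausdorffness the two limits coincide, giving $y^n=f(z)\in f(X)$, i.e.\ $(3)$ with $k=n$. The only subtlety is the possible absence of a global unit in $Y$, which is exactly why one first promotes $e=f(e_X)$ to a unit on the closure before invoking the power map.

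The hard direction is $(2)\Rightarrow(1)$, where the Prodanov--Stoyanov phenomenon appears and where I expect the main obstacle to lie. I would prove the contrapositive: assuming $\overline{nX}$ is not compact, I must manufacture an injective continuous homomorphism $f:X\to Y$ into a topological group together with a point $y\in\overline{f(X)}$ for which $y^n\notin f(X)$ (and, in the unquantified version, with $\overline{kX}$ non-compact for all $k$, a point with $y^k\notin f(X)$ for every $k$). Since any injective continuous homomorphism on a complete $X$ realizes a coarser group topology on $X$ with $\overline{f(X)}$ playing the role of a completion, the natural device is to take $f$ to be the identity onto $(X,\tau)$ for a suitable coarser Hausdorff group topology $\tau$ and let $Y$ be the completion of $(X,\tau)$; the task then becomes the production of $y\in Y$ with $ny\notin X$. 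The difficulty is twofold. First, a coarser incomplete topology must exist at all, which is the province of the classical precompactness theory for complete Abelian groups and the character/duality machinery underlying Prodanov--Stoyanov. Second — and this is the genuinely new layer — the added element $y$ must be chosen so that its $n$-th multiple is a \emph{new} point, that is, so that $nY\not\subseteq X$; merely enlarging the group is not enough. I would try to control this by reducing to the complete non-compact subgroup $H=\overline{nX}$ and lifting its incompleteness back through the multiplication-by-$n$ map to all of $X$, arranging that the points missing from the completion are genuine $n$-th multiples. This lifting, rather than the existence of the coarser topology, is where I expect the real work to concentrate.
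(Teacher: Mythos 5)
Your treatment of the two easy implications is correct: $(3)\Rightarrow(2)$ is indeed immediate, and your net argument for $(1)\Rightarrow(3)$ is sound --- in fact your device of first promoting $e=f(e_X)$ to a two-sided unit on $\overline{f(X)}$ and only then invoking continuity of $w\mapsto ew^ne$ is a more careful justification of the continuity of the $n$-th power map on a powertopological semigroup than the one-line appeal made in the paper's Theorem~\ref{t:bound}.

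The genuine gap is the implication $(2)\Rightarrow(1)$, which you rightly identify as carrying all the weight but then leave as a programme rather than a proof. Saying that one should ``reduce to the complete non-compact subgroup $H=\overline{nX}$ and lift its incompleteness back through the multiplication-by-$n$ map'' names the desired conclusion, not a construction, and ``the character/duality machinery underlying Prodanov--Stoyanov'' does not by itself produce a coarser Hausdorff group topology on $X$ whose completion contains a point $y$ with $y^n\notin X$ (respectively, with $y^k\notin X$ for all $k$). Note that the obstacle is not the existence of a strictly coarser Hausdorff group topology --- that is the minimality question, which is not assumed here --- but making the new topology simultaneously Hausdorff, non-complete, and such that the missing point has the prescribed divisibility behaviour. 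The paper's construction of exactly this object occupies Sections 3--7: a sequence $(x_m)_{m\in\w}$ in a suitable countable subgroup is produced by an invariant-mean (amenability) argument (Lemma~\ref{l:seq}); it defines the weaker topology $\tau_\diamond$ via the sets $\diamondsuit_\delta$; Hausdorffness is proved by combining F\o lner's theorem on the Bohr topology with Baer's extension theorem (Lemma~\ref{l:Hausdorff}); and a case analysis --- the minimal precompact exponent $s$ and the $p$-primary subgroups $T_p(X)$ for the quantitative statement (Lemma~\ref{l:CE}), and the trichotomy ``not hypocompact exponent / hypocompact with some $T_p(X)$ of non-compact exponent / hypocompact with all $T_p(X)$ of precompact exponent'' for the unquantified one (Lemmas~\ref{l:non-hypo}, \ref{l:p-local}, \ref{l:P}) --- shows that the accumulation point $x_\infty$ of $(x_m)_{m\in\w}$ in $\bar X_\diamond$ has the required powers outside $X_\diamond$. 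None of this, nor any workable substitute for it, appears in your proposal, so the hard direction remains unproved.
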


This theorem has many interesting and non-trivial implications.

\begin{corollary}\label{c1} An Abelian topological group $X$ is compact if and only if $X$ for any injective continuous homomorphism $f:X\to Y$ to a topological group $Y$ the image $f(X)$ is closed in $Y$.
\end{corollary}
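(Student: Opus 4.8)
The plan is to recognize Corollary \ref{c1} as essentially the case $n=1$ of Theorem \ref{t:main}, after first upgrading the closedness hypothesis to guarantee completeness of $X$. Recall from the introduction that a topological group is compact precisely when $1$ is one of its compact exponents; correspondingly, condition (2) of Theorem \ref{t:main} with the choice $k=n=1$ reads: for every injective continuous homomorphism $f:X\to Y$ and every $y\in\overline{f(X)}$ one has $y=y^1\in f(X)$, which is exactly the assertion that $f(X)$ is closed in $Y$. Thus the content of the corollary is the equivalence $(1)\Leftrightarrow(2)$ of the theorem specialized to $n=1$, and the only gap to bridge is that the theorem presupposes completeness of $X$ while the corollary does not.

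For the ``only if'' direction I would argue directly, without invoking the theorem. If $X$ is compact and $f:X\to Y$ is any continuous homomorphism into a (Hausdorff) topological group $Y$, then $f(X)$ is the continuous image of a compact set, hence compact, and a compact subset of a Hausdorff space is closed; in particular $f(X)$ is closed whenever $f$ is injective. Note that injectivity plays no role in this implication.

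For the ``if'' direction, suppose $f(X)$ is closed in $Y$ for every injective continuous homomorphism $f:X\to Y$. The first, and key, step is to deduce that $X$ itself is complete: apply the hypothesis to the inclusion $i:X\hookrightarrow\hat X$ of $X$ into its completion $\hat X$, which is an injective continuous homomorphism into an Abelian topological group. By hypothesis $i(X)=X$ is closed in $\hat X$, but $X$ is also dense in $\hat X$, so $X=\hat X$ and $X$ is complete. With completeness secured, the hypothesis is verbatim condition (2) of Theorem \ref{t:main} for the value $k=n=1$, so the theorem yields condition (1) with $n=1$, i.e.\ $\overline{1X}=X$ is compact, as desired.

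I expect the only genuine subtlety to be this bootstrapping of completeness out of the closedness hypothesis, since Theorem \ref{t:main} is simply unavailable until $X$ is known to be complete; once that point is settled, the corollary falls out of the $n=1$ instance of the theorem with no further work.
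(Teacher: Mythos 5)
Your proposal is correct and is essentially the paper's intended derivation: the paper states Corollary~\ref{c1} without proof as an immediate consequence of Theorem~\ref{t:main} with $n=1$, using that compactness of $X$ means $1$ is a compact exponent and that condition (2) with $k=n=1$ is precisely closedness of $f(X)$. Your extra step of bootstrapping completeness by applying the hypothesis to the embedding $X\hookrightarrow\hat X$ (dense and closed, hence $X=\hat X$) correctly fills the one detail the paper leaves implicit, since Theorem~\ref{t:main} is stated only for complete groups.
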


This corollary can be reformulated as follows.

\begin{corollary}\label{c2} An Abelian topological group $X$ is compact if and only if $X$ is complete in each weaker Hausdorff group topology.
\end{corollary}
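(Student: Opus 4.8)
The plan is to deduce Corollary \ref{c2} from the preceding Corollary \ref{c1}, reading the phrase ``complete in each weaker Hausdorff group topology'' as follows: for every group topology $\tau'$ on the underlying group of $X$ that is Hausdorff and coarser than the original topology $\tau$ of $X$, the topological group $(X,\tau')$ is complete (in its two-sided uniformity). With this reading the two directions split cleanly, one being almost formal and the other resting on Corollary \ref{c1}.

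For the ``only if'' part, suppose $X=(X,\tau)$ is compact and let $\tau'\subseteq\tau$ be any weaker Hausdorff group topology. The identity map $(X,\tau)\to(X,\tau')$ is a continuous bijection from a compact space onto a Hausdorff space, hence a homeomorphism, so in fact $\tau'=\tau$. Thus the only weaker Hausdorff group topology on $X$ is $\tau$ itself, and since a compact Hausdorff group is complete (every compact uniform space being complete), $(X,\tau')$ is complete. This settles the forward implication without even invoking Corollary \ref{c1}.

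For the ``if'' part I would verify the criterion of Corollary \ref{c1}: that $f(X)$ is closed in $Y$ for every injective continuous homomorphism $f:X\to Y$ into a topological group $Y$. Given such an $f$, consider the initial topology $\tau_f=\{f^{-1}(U):U\text{ open in }Y\}$ on the group $X$. Because $f$ is a homomorphism into a topological group, $\tau_f$ is a group topology; because $f$ is injective and $Y$ is Hausdorff, $\tau_f$ is Hausdorff; and since $f:(X,\tau)\to Y$ is continuous, $\tau_f\subseteq\tau$, so $\tau_f$ is a weaker Hausdorff group topology. By hypothesis $(X,\tau_f)$ is then complete. By the very definition of the initial topology, $f$ is a topological isomorphism of $(X,\tau_f)$ onto the subgroup $f(X)$ of $Y$; hence $f(X)$ is a complete topological group, therefore closed in $Y$. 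Corollary \ref{c1} then gives that $X$ is compact.

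The routine verifications are that $\tau_f$ really is a Hausdorff group topology coarser than $\tau$ and that $f$ identifies $(X,\tau_f)$ with the subgroup $f(X)$ topologically. The one genuinely delicate point is the transfer of completeness across this identification: here one must use that the two-sided group uniformity of a subgroup $f(X)$ coincides with the restriction of the two-sided uniformity of $Y$, together with the standard fact that a complete uniform subspace of a Hausdorff uniform space is closed. I expect this uniformity bookkeeping, rather than anything topological-group-theoretic, to be the main obstacle.
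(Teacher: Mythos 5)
Your proof is correct and matches the paper's intent: the paper presents Corollary~\ref{c2} as a direct reformulation of Corollary~\ref{c1}, and your argument is exactly the verification of that reformulation, translating weaker Hausdorff group topologies into injective continuous homomorphisms via the initial topology and using that a complete subgroup is closed in any ambient topological group. The one point you flag as delicate (the two-sided uniformity of a subgroup being the restriction of that of the ambient group) is indeed the standard fact one needs, and it holds.
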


This corollary implies the famous Prodanov-Stoyanov Theorem \cite{PS} on the precompactness of minimal Abelian topological groups. We recall \cite{D98}, \cite{DM} that a topological group $X$ is {\em minimal} if $X$ does not admit a strictly weaker Hausdorff group topology.

\begin{corollary}[Prodanov, Stoyanov]\label{c:PS} Each minimal Abelian topological group $X$ is precompact.
\end{corollary}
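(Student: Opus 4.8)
The plan is to reduce the precompactness of a minimal Abelian group $X$ to the compactness of its completion, and then to apply Corollary~\ref{c2}. Let $\hat X$ denote the completion of $X$ in the two-sided uniformity; since $X$ is Abelian, its left and right uniformities coincide, so $\hat X$ is a complete Hausdorff Abelian topological group containing $X$ as a dense subgroup. By the very definition of precompactness, it suffices to prove that $\hat X$ is compact.

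The crucial step is to show that the completion $\hat X$ is again minimal. This is exactly the ``completion direction'' of the classical Minimality Criterion of Banaschewski and Stephenson: a dense subgroup $H$ of a Hausdorff topological group $G$ is minimal if and only if $G$ is minimal and $H$ is $G$-essential, i.e.\ every nontrivial closed subgroup of $G$ meets $H$ in a point distinct from the unit. Applying this criterion to the dense subgroup $X$ of $\hat X$, the minimality of $X$ forces the minimality of $\hat X$. I regard this as the main obstacle of the argument: passing from minimality of $X$ to minimality of $\hat X$ cannot be done by a naive restriction of topologies, because although any coarser Hausdorff group topology on $\hat X$ must agree with the original one on the dense subgroup $X$ (by minimality of $X$), two group topologies that agree on a dense subgroup need not coincide on the whole group. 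Bridging this gap is precisely what essentiality supplies.

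Once $\hat X$ is known to be minimal, the rest is formal. By minimality, $\hat X$ admits no strictly coarser Hausdorff group topology, so the only Hausdorff group topology weaker than the original one is the original topology itself---in which $\hat X$, being a completion, is complete. Thus $\hat X$ is complete in each weaker Hausdorff group topology. Since $\hat X$ is a complete Abelian topological group, Corollary~\ref{c2} now yields that $\hat X$ is compact. Consequently $X$, whose completion $\hat X$ is compact, is precompact, which is the desired conclusion.
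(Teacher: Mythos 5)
Your proof is correct, but it routes the crucial step differently from the paper. Both arguments reduce the statement to the compactness of the completion $\bar X$ and then invoke Corollary~\ref{c1}/Corollary~\ref{c2}; the difference lies in how the minimality of $X$ is brought to bear on $\bar X$. You transfer minimality itself from $X$ to $\bar X$ via the Minimality Criterion of Banaschewski--Stephenson--Prodanov (a dense subgroup $H$ of $G$ is minimal iff $G$ is minimal and $H$ is $G$-essential), after which the hypothesis of Corollary~\ref{c2} for $\bar X$ holds vacuously. The paper never needs $\bar X$ to be minimal: it verifies the closed-image condition of Corollary~\ref{c1} directly --- given an injective continuous homomorphism $f:\bar X\to Y$ with $Y$ complete and $f(\bar X)$ dense, minimality of $X$ makes $f|X:X\to f(X)$ a topological isomorphism, and the uniqueness of completions extends this to a topological isomorphism of $\bar X$ onto $\overline{f(X)}=Y$, so $f(\bar X)=Y$ is closed. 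Your approach buys the conceptually clean intermediate statement ``the completion of a minimal Abelian group is minimal,'' but at the price of importing the Minimality Criterion, whose forward direction --- precisely the one you need --- is itself a nontrivial theorem; your own observation that two group topologies agreeing on a dense subgroup need not coincide is exactly the difficulty that theorem exists to resolve, so you are not avoiding that obstacle but outsourcing it. The paper's route is self-contained modulo the basic functoriality of completions. Both arguments are valid.
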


\begin{proof} The precompactness of $X$ is equivalent to the compactness of the completion $\bar X$ of $X$. By Corollary~\ref{c1}, the compactness of $\bar X$ will follow as soon as we check that for any continuous injective homomorphism $f:\bar X\to Y$ to a topological group $Y$ the image $f(\bar X)$ is closed in $Y$. We lose no generality assuming that the topological group $Y$ is complete and $f(\bar X)$ is dense in $Y$. By the minimality of $X$, the restriction $f|X:X\to f(X)$ is a topological isomorphism. Then it uniquely extends to a topological isomorphism of the completions $\bar X$ and $\overline{f(X)}=Y$ of the topological groups $X$ and $f(X)$. So, $f:\bar X\to Y$ is a topological isomorphism and $f(\bar X)=Y$ is closed in $Y$.
\end{proof}

Another corollary of Theorem~\ref{t:main} confirms an old conjecture of Banakh and Ravsky \cite{BaR2001}, \cite{Ravsky2003}.

\begin{corollary} For a complete Abelian topological group $X$ the following conditions are equivalent:
\begin{enumerate}
\item $X$ has compact exponent;
\item for any continuous homomorphism $f:X\to Y$ to a Hausdorff powertopological semigroup $Y$ the image $f(X)$ is closed in $Y$;
\item for any injective continuous homomorphism $f:X\to Y$ to a topological group $Y$ the quotient group $\overline{f(X)}/f(X)$ is a torsion group;
\item for any isomorphic topological embedding $f:X\to Y$ of $X$ into a Hausdorff paratopological group $Y$ the image $f(X)$ is closed in $Y$.
\end{enumerate}
\end{corollary}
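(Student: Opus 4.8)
The plan is to treat Theorem~\ref{t:main} as the engine and to prove the cycle $(1)\Ra(2)\Ra(4)\Ra(1)$ together with the side loop $(1)\Ra(3)\Ra(1)$, so that all four conditions become equivalent. The forward implications are essentially book-keeping around the main theorem, while the two converses to $(1)$ carry the real content.

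First I would dispose of the easy forward directions. For $(1)\Ra(3)$: if $n$ is a compact exponent of $X$, then condition~(2) of Theorem~\ref{t:main} gives $y^n\in f(X)$ for every injective continuous homomorphism $f:X\to Y$ into a topological group and every $y\in\overline{f(X)}$; hence every element of the abstract quotient $\overline{f(X)}/f(X)$ is annihilated by $n$, so this quotient is torsion (indeed of exponent $n$). For $(2)\Ra(4)$: a Hausdorff paratopological group is, as a semigroup, powertopological, and an isomorphic topological embedding is in particular a continuous homomorphism, so $(2)$ applies verbatim and yields closedness of the image, which is exactly $(4)$.

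Next, $(1)\Ra(2)$ is the structural step, and here I would reduce (as in $(4)$) to a topological embedding $f$. Writing $K=\overline{nX}$ (compact, a subgroup), the argument has three moves: (a) the $n$-th power map is continuous on $Y$ and carries $\overline{f(X)}$ into $\overline{f(nX)}=f(K)$, which is compact because $K$ is; (b) using separate continuity and closed-equalizer arguments, show that $\overline{f(X)}$ is an Abelian group with unit $f(e)$, containing the compact subgroup $f(K)$ with $\overline{f(X)}/f(K)$ of exponent $n$; (c) promote $\overline{f(X)}$ to a genuine topological group and use that $f(X)\cong X$ is complete to conclude $f(X)=\overline{f(X)}$, since a complete subgroup of a Hausdorff topological group is closed. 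Step~(c) is the delicate point: upgrading the merely separately continuous operations on $\overline{f(X)}$ to a topological group structure is exactly where the compact kernel $f(K)$ — i.e.\ the compact exponent — is indispensable (an Ellis/compactness-type argument over $f(K)$, exploiting that $\overline{f(X)}/f(K)$ has bounded exponent).

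Finally, the converses $(3)\Ra(1)$ and $(4)\Ra(1)$ are where the work lies, and I would prove both contrapositively: assuming $X$ has no compact exponent, Theorem~\ref{t:main} supplies, for each $n$, an injective continuous homomorphism $f_n:X\to Y_n$ and a point $y_n\in\overline{f_n(X)}$ with $y_n^n\notin f_n(X)$. For $(3)\Ra(1)$ the obstacle is \emph{uniformity}: Theorem~\ref{t:main} produces a different witness for each $n$, whereas to defeat $(3)$ I need a \emph{single} injective continuous homomorphism $f$ for which $\overline{f(X)}/f(X)$ has an element of infinite order; the plan is to amalgamate the $f_n$ (into a product $\prod_n Y_n$ or a single completion-type target) and, using completeness of $X$, to extract one point of infinite order modulo $f(X)$, the control of closures under amalgamation being the technical heart. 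For $(4)\Ra(1)$, which is the Banakh--Ravsky conjecture and the main obstacle of the whole corollary, the plan is to refine or extend the group topology furnished by the failure of compact exponent into a genuinely \emph{asymmetric} Hausdorff paratopological group topology in which the topological copy of $X$ sits as a non-closed topological subgroup. Manufacturing this paratopological (non-group) witness — rather than the topological group that the semigroup machinery of Theorem~\ref{t:main} hands us — is the crux, and I expect it to be the hardest part of the proof.
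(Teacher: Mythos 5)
The paper's proof of this corollary is two citations: the equivalences $(1)\Leftrightarrow(2)\Leftrightarrow(3)$ are read off from the corresponding equivalences of Theorem~\ref{t:main}, and $(3)\Leftrightarrow(4)$ is Ravsky's theorem from \cite{Ravsky2003}. Measured against that, your proposal has one fatal gap. It is the step $(4)\Rightarrow(1)$: you correctly identify it as the crux (it is precisely the Banakh--Ravsky conjecture), but you only announce a plan to ``manufacture'' an asymmetric paratopological witness and say you expect it to be the hardest part; no construction is given, so the cycle $(1)\Rightarrow(2)\Rightarrow(4)\Rightarrow(1)$ is never closed. The paper does not build such a witness either --- it routes the paratopological condition $(4)$ through Ravsky's purely group-theoretic equivalence $(3)\Leftrightarrow(4)$, after which Theorem~\ref{t:main} does all the work. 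Without either invoking that external result or actually carrying out the paratopological construction, your argument does not establish the corollary.

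The remaining difficulties you flag are artifacts of misreadings rather than real obstacles. First, condition $(3)$ is \emph{verbatim} condition $(2)$ of Theorem~\ref{t:main}: saying that $\overline{f(X)}/f(X)$ is torsion for every injective continuous $f$ is exactly saying that for every such $f$ and every $y\in\overline{f(X)}$ some power $y^k$ lies in $f(X)$. Hence $(1)\Leftrightarrow(3)$ is immediate in both directions. Your ``uniformity'' worry in $(3)\Rightarrow(1)$ rests on a misnegation of the quantifiers: the negation of ``for all $f$ and all $y$ there exists $k$'' is ``there exist a single $f$ and a single $y$ with $y^k\notin f(X)$ for all $k\in\IN$'', which is already one coset of infinite order; the proposed amalgamation over $\prod_n Y_n$ solves a problem that does not exist. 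Second, in $(1)\Rightarrow(2)$ you ``reduce to a topological embedding'', but condition $(2)$ quantifies over arbitrary continuous homomorphisms into powertopological semigroups, for which $f(X)$ with the subspace topology need not be topologically isomorphic to $X$ and need not be complete; so your concluding step ``$f(X)\cong X$ is complete, hence closed'' is not available, and this implication too must be extracted from Theorem~\ref{t:main} rather than from completeness of the image.
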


\begin{proof} The equivalences $(1)\Leftrightarrow(2)\Leftrightarrow(3)$ follow immediately from the corresponding equivalences in Theorem~\ref{t:main} and $(3)\Leftrightarrow(4)$ was proved by Ravsky \cite{Ravsky2003}.
\end{proof}

\begin{remark} As was noticed by Dikranjan and Megrelishvili \cite[\S5]{DM}, the Stoyanov-Prodanov Theorem~\ref{c:PS} fails for nilpotent groups: the Weyl-Heisenberg group $H(w_0)=H(\IR)/Z$ where $$H(\IR)=\left\{\left(\begin{array}{ccc}1&a&b\\0&1&c\\0&0&1\end{array}\right):a,b,c\in\IR\right\}
\mbox{ \ \ and \ \ } Z=\left\{\left(\begin{array}{ccc}1&0&b\\0&1&0\\0&0&1\end{array}\right):b\in\IZ\right\}$$
is nilpotent Lie group (of nilpotence degree 2), which is minimal but not compact, see also \cite{DM2010} and \cite{Meg}.
Nonetheless, Dikranjan and Uspenskij \cite{DU} proved the following two extensions of Corollary~\ref{c1} to nilpotent and solvable topological groups.
\end{remark}

\begin{theorem}[Dikranjan, Uspenskij] A nilpotent topological group $X$ is compact if and only if for every continuous homomorphism $f:X\to Y$ to a topological group $Y$ the image $f(X)$ is closed in $Y$.
\end{theorem}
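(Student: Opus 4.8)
The plan is to prove the substantial implication---that the closed-image property forces compactness---by induction on the nilpotency class $c$ of $X$; the reverse implication is immediate, since a continuous image of a compact group is compact and hence closed in the Hausdorff group $Y$. Let me abbreviate the hypothesis ``$f(X)$ is closed in $Y$ for every continuous homomorphism $f\colon X\to Y$ into a topological group $Y$'' by saying that $X$ is \emph{absolutely closed}. The first observation is that absolute closedness passes to Hausdorff quotients: if $q\colon X\to X/N$ is the quotient by a closed normal subgroup $N$ and $g\colon X/N\to Y$ is continuous, then $g(X/N)=(g\circ q)(X)$ is closed by the assumption on $X$. For the base case $c\le 1$ the group $X$ is Abelian, and the hypothesis restricted to injective homomorphisms is precisely the condition of Corollary~\ref{c1}, so $X$ is compact.

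For the inductive step I would assume the statement for all nilpotent groups of class $<c$ and let $X$ be nilpotent of class $c\ge 2$. Let $Z=Z(X)$ be the center of $X$; it is a closed central subgroup (the center of a Hausdorff topological group is closed), and $X/Z$ is nilpotent of class $\le c-1$. By the previous paragraph $X/Z$ is absolutely closed, so by the inductive hypothesis $X/Z$ is compact. Granting that $Z$ is also compact (the step below), the quotient map $X\to X/Z$ is perfect, its fibers being cosets of the compact group $Z$, whence $X$ is compact. Thus everything reduces to proving that the center $Z$ is compact.

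The heart of the argument is to show that $Z$ inherits absolute closedness; being Abelian, it will then be compact by the base case. So fix a continuous homomorphism $f\colon Z\to Y$. Replacing $Y$ by the closure $\overline{f(Z)}$ I may assume that $Y$ is Abelian and that $f(Z)$ is dense in $Y$; it then suffices to prove that $f(Z)$ is closed in $Y$. I would form the pushout of the inclusion $Z\hookrightarrow X$ and of $f\colon Z\to Y$: set $\Delta=\{(z,f(z)^{-1}):z\in Z\}\subseteq X\times Y$ and $P=(X\times Y)/\Delta$. Here $\Delta$ is a subgroup because $f$ is a homomorphism; it is central in $X\times Y$ because $Z$ is central in $X$ and $Y$ is Abelian, hence normal; and it is closed, being the graph of the continuous map $z\mapsto f(z)^{-1}$ over the closed subgroup $Z$ of $X$, so that $\Delta$ is closed in $Z\times Y$ and a fortiori in $X\times Y$. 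Consequently $P$ is a Hausdorff topological group.

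Now consider the canonical continuous homomorphisms $i_X\colon X\to P$, $x\mapsto (x,e)\Delta$, and $i_Y\colon Y\to P$, $y\mapsto (e,y)\Delta$; by construction they agree on $Z$ in the sense that $i_X|_Z=i_Y\circ f$. Since $X$ is absolutely closed, $i_X(X)$ is closed in $P$, and therefore $i_Y^{-1}\big(i_X(X)\big)$ is closed in $Y$. A direct computation with cosets of $\Delta$ identifies this preimage with exactly $f(Z)$, so $f(Z)$ is closed in $Y$, as required. The main obstacle is precisely this central-extension construction: one must twist by $f$ in the right way so that $\Delta$ is simultaneously normal---which forces the use of the center and the reduction to Abelian $Y$---and closed---which uses that $Z$ is a closed subgroup---and then recognize the fibre $i_Y^{-1}(i_X(X))$ as $f(Z)$. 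Once the center is handled, the compact-by-compact conclusion and the induction are routine.
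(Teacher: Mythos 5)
The paper does not actually prove this statement: it is quoted as a known result of Dikranjan and Uspenskij \cite{DU}, so there is no in-paper argument to compare yours against. Judged on its own, your proof is correct and complete modulo two standard facts you should cite rather than wave at: that the quotient map $X\to X/Z$ modulo a \emph{compact} subgroup is perfect (compact fibres alone do not make a map perfect --- you also need that the map is closed, which is exactly what compactness of $Z$ buys, see \cite[Ch.~1]{AT}), and that the graph of a continuous map into a Hausdorff group is closed (which makes $\Delta$ closed in $Z\times Y$ and hence in $X\times Y$). The skeleton is sound: absolute closedness passes to Hausdorff quotients since $g(X/N)=(g\circ q)(X)$; the base case is exactly Corollary~\ref{c1} (your hypothesis is stronger than the injective-homomorphism hypothesis there, and completeness of $X$ comes for free by applying the hypothesis to the embedding into the Raikov completion); and the crux --- that the centre $Z$ of an absolutely closed group is absolutely closed --- is handled correctly by the central pushout $P=(X\times Y)/\Delta$ with $\Delta=\{(z,f(z)^{-1}):z\in Z\}$, where centrality of $Z$ in $X$ together with commutativity of $Y=\overline{f(Z)}$ makes $\Delta$ normal, and the coset computation $i_Y^{-1}\bigl(i_X(X)\bigr)=f(Z)$ checks out: $(e,y)\Delta=(x,e)\Delta$ forces $x\in Z$ and $y=f(x)$. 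This is in substance the classical reduction for h-complete nilpotent groups (induction on the nilpotency class through the upper central series), so your argument reconstructs the cited theorem from the paper's Corollary~\ref{c1}; the analogous statement for solvable groups genuinely needs the stronger hypothesis on closed normal subgroups, precisely because the derived series, unlike the upper central series, does not consist of central subgroups and the pushout trick breaks down there.
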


\begin{theorem}[Dikranjan, Uspenskij] A solvable topological group $X$ is compact if and only if for every continuous homomorphism $f:Z\to Y$ from a closed normal subgroup $Z\subset X$ of $X$ to a topological group $Y$ the image $f(Z)$ is closed in $Y$.
\end{theorem}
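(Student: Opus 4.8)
The plan is to prove the nontrivial (sufficiency) direction by building the \emph{closed derived series} of $X$ and showing, one layer at a time, that each Abelian factor is compact; the trivial (necessity) direction is immediate, since a closed subgroup $Z$ of a compact $X$ is compact, so its continuous homomorphic image $f(Z)$ is compact and hence closed in the Hausdorff group $Y$.

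For sufficiency, assume that every continuous homomorphism out of every closed normal subgroup of $X$ has closed image. Define $X_0=X$ and $X_{i+1}=\overline{[X_i,X_i]}$. First I would verify by induction that each $X_i$ is a closed normal subgroup of $X$: the closure of a normal subgroup is normal because every inner automorphism is a homeomorphism, and if $X_i$ is normal then so is its commutator subgroup $[X_i,X_i]$, hence so is its closure $X_{i+1}$. Next, using the continuity of the commutator map one gets $[\bar A,\bar A]\subseteq\overline{[A,A]}$, whence $X_i\subseteq\overline{X^{(i)}}$ for the abstract derived subgroups $X^{(i)}$; since $X$ is solvable of some derived length $n$, this forces $X_n=\{1\}$, so the series terminates.

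The heart of the argument is that each Abelian quotient $X_i/X_{i+1}$ is compact. Since $X_{i+1}=\overline{[X_i,X_i]}$ is closed in $X_i$ and contains $[X_i,X_i]$, the quotient $X_i/X_{i+1}$ is a Hausdorff Abelian topological group. Given any injective continuous homomorphism $g:X_i/X_{i+1}\to Y$ into a topological group, I would compose it with the quotient map $q_i:X_i\to X_i/X_{i+1}$ to obtain a continuous homomorphism $g\circ q_i:X_i\to Y$. Because $X_i$ is a closed normal subgroup of $X$, the hypothesis applies and yields that $g(X_i/X_{i+1})=(g\circ q_i)(X_i)$ is closed in $Y$. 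Thus $X_i/X_{i+1}$ satisfies the condition of Corollary~\ref{c1}, and is therefore compact. Finally I would assemble the compactness of $X$ by descending the terminating series: $X_{n-1}=X_{n-1}/X_n$ is compact, and if $X_{i+1}$ is compact then, being a closed normal subgroup of $X_i$ with compact quotient $X_i/X_{i+1}$, the extension $X_i$ is compact as well (an extension of a compact group by a compact group is compact); iterating up to $i=0$ gives the compactness of $X=X_0$.

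The main obstacle to watch is the \emph{normality bookkeeping}: the hypothesis only supplies closed images for homomorphisms defined on subgroups that are normal \emph{in $X$}, so a naive induction on the derived length---replacing $X$ by $\overline{[X,X]}$ and recursing---breaks down, because a closed normal subgroup of $\overline{[X,X]}$ need not be normal in $X$. Working instead with the terms $X_i$ of the closed derived series circumvents this, since each $X_i$ is normal in the original group $X$. The second delicate point is the lifting step: one must apply the hypothesis to the composite $g\circ q_i$ regarded as a homomorphism out of $X_i$---which is the distinguished closed normal subgroup---rather than to $g$ out of the quotient, and it is precisely the normality of $X_i$ in $X$ that legitimizes this.
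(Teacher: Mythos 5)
The paper does not actually prove this theorem: it is quoted from Dikranjan and Uspenskij \cite{DU} as a known result, so there is no in-paper argument to compare yours against. On its own merits your proposal looks correct, and it is the natural reduction of the solvable case to the Abelian Corollary~\ref{c1}. The key bookkeeping point you identify is right: the terms of the closed derived series $X_{i+1}=\overline{[X_i,X_i]}$ are normal in the ambient group $X$ (the commutator subgroup of a normal subgroup is normal in the whole group, and the closure of a normal subgroup is normal), which is precisely what entitles you to apply the hypothesis to the composite $g\circ q_i:X_i\to Y$; note also that the hypothesis covers \emph{all} continuous homomorphisms of $X_i$, so the non-injectivity of $g\circ q_i$ costs nothing even though Corollary~\ref{c1} is stated for injective maps of the Abelian quotient $X_i/X_{i+1}$. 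The termination $X_n=\{e\}$ indeed follows from $[\bar A,\bar A]\subset\overline{[A,A]}$ (the closed subgroup $\overline{[A,A]}$ contains all commutators of elements of $\bar A$ by continuity, hence the subgroup they generate) together with Hausdorffness, and the final assembly rests on the standard fact that the quotient map by a compact normal subgroup is perfect, so an extension of a compact group by a compact group is compact. If you write this up, it would be worth citing or proving that last extension fact explicitly, but I see no gap.
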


Our proof of Theorem~\ref{t:main} follows the line of the proof of Stoyanov-Prodanov Theorem from \cite{DPS}, with an additional care of (non)completeness and compact exponent. Because of that it is quite long and technical. We separate the proof into several steps. In Section~\ref{s:prel} we establish some preliminary results related to bounded sets in topological groups and properties of topological groups of (pre)compact exponent. In Section~\ref{s:Key}, on each Abelian topological group $X$ we define a weaker group topology $\tau_\diamond$ dependent on a sequence $(x_n)_{n\in\w}$ of points in $X$ and prove two Key Lemmas. The first Key Lemma~\ref{l:seq} produces a sequence $(x_n)_{n\in\w}$ in $X$ with some additional properties and the second Key Lemma~\ref{l:Hausdorff} shows that these properties ensure that the topology $\tau_\diamond$ is Hausdorff.
We shall apply these Key Lemmas once in the proof of the ``bounded'' version of Theorem~\ref{t:main} (with $k=n$) and three times for the ``unbounded'' version (with $k$ arbitrary). So, Theorem~\ref{t:main} follows from its ``bounded'' and ``unbounded'' versions, proved in Theorem~\ref{t:bound} and \ref{t:unbound}, respectively.

\section{Preliminaries}\label{s:prel}

In this section we fix some standard notations used in the paper. Also we recall some known results and prove some simple lemmas. 

\subsection{Standard Notations}

By $\IN$ we denote the set of natural numbers, i.e., positive integer numbers and by $\w$ the set of non-negative integer numbers (= finite ordinals).  By $\IZ$ we denote the additive group of integer numbers.

By $\IP$ we denote the set of all prime numbers. For a number $n\in\IN$ let $\IP_n$ be the set of prime divisors of $n$.

For a subset $A$ of a topological space $X$ by $\bar A$ we denote the closure of $A$ in $X$. A point $x\in X$ is called an {\em accumulation point} of a sequence $(x_n)_{n\in\w}$ in a topological space $X$ if each neighborhood $O_x\subset X$ of $x$ contains infinitely many points $x_n$, $n\in\w$.

For group $X$ by $e$ we denote its unit, i.e., a unique element $e$ such that $ex=x=xe$. For two subsets $A,B$ of a group $X$ we put $AB=\{ab:a\in A,\;b\in B\}$. The powers $A^n$ are defined by induction: $A^0=\{e\}$ and $A^{n+1}=A^nA$ for $n\in\w$. Also $A^{-1}=\{a^{-1}:a\in A\}$ and $A^{-n}=(A^n)^{-1}=(A^{-1})^n$ for $n\in\w$.

For a sequence $(A_n)_{n\in\w}$ of subsets of a group $X$ by $\sum_{n\in\w}A_n$ we denote the subgroup of $X$ generated by the union $\bigcup_{n\in\w}A_n$.

For a subset $A$ of a group $X$ and a natural number $n\in\IN$ let $nA=\{a^n:a\in A\}$ be the set of $n$-th powers of the elements of $A$.
Observe that $nA\subset A^n$ and $nA\ne A^n$ in general. If $A$ is a subgroup of an Abelian group $X$, then $nA$ is a subgroup of $X$.

A topological group $X$ is {\em complete} if it is complete in its two-sided uniformity (generated by the entourages $\{(x,y)\in X\times X:y\in xU\cap Ux\}$ where $U$ runs over neighborhoods of the unit $e$) of $X$).

It is well-known (see \cite[\S3.6]{AT} or \cite{RD}) that the completion $\bar X$ of a topological group $X$ by its two-sided uniformity carries a natural structure of the topological group, which contains $X$ as a dense subgroup.


\subsection{Bounded sets in topological groups}

In this section we establish some properties of bounded and precompact sets in topological groups.

A subset $B$ of a group $X$ is called {\em $U$-bounded} for a set $U\subset X$ if $B\subset FU\cap UF$ for some finite subset $F\subset X$.

A subset $X$ of a topological group is called {\em totally bounded} if it is $U$-bounded for any neighborhood $U$ of the unit in $X$. It is well-known that a subset $B$ of a complete topological group $X$ has compact closure $\bar B$ in $X$ if and only if $B$ is totally bounded. Because of that totally bounded subsets in topological groups are also called {\em precompact sets}, see \cite[\S3.7]{AT}.

The following simple (but important) lemma shows that the
failure of the total boundedness can be recognized by countable subgroups.

\begin{lemma}\label{l:separ} If a subset $A$ of a group $X$ is not $UU^{-1}$-bounded for some set $U\subset X$, then for some countable subgroup $Z\subset X$ the set $Z\cap A$ is not $U$-bounded.
\end{lemma}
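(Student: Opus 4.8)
The plan is to prove the statement by a countable ``reflection'' argument: I will extract from $A$ a single sequence $(a_n)_{n\in\w}$ whose pairwise ratios avoid $UU^{-1}$, and show that the countable subgroup it generates already witnesses the failure of $U$-boundedness. Throughout put $V:=UU^{-1}$, and observe two elementary facts that drive the whole argument: $V$ is \emph{symmetric}, i.e.\ $V^{-1}=V$, and $e\in V$ (assuming $U\ne\emptyset$, which is the only non-trivial case).

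First I would reduce the two-sided hypothesis to a one-sided one. If $A$ were simultaneously \emph{left} $V$-bounded (say $A\subset F_1V$) and \emph{right} $V$-bounded (say $A\subset VF_2$) for finite sets $F_1,F_2$, then with $F:=F_1\cup F_2$ one would get $A\subset FV\cap VF$, i.e.\ $A$ would be $V$-bounded, contrary to hypothesis. Hence at least one of the two fails, and I treat the case in which $A$ is \emph{not right} $V$-bounded, so that $A\not\subset VF$ for every finite $F\subset X$. I then build, by induction on $n\in\w$, points $a_n\in A$ with $a_n\notin V\{a_0,\dots,a_{n-1}\}$; each choice is possible precisely because the forbidden set has the form $V\cdot(\text{finite})$ and $A$ is not right $V$-bounded. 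By construction $a_na_i^{-1}\notin V$ for all $i<n$, and the symmetry $V^{-1}=V$ upgrades this to $a_ia_j^{-1}\notin V=UU^{-1}$ for \emph{all} distinct $i,j$. Since $e\in V$, the forbidden sets contain the previously chosen points, so the $a_n$ are pairwise distinct. Let $Z$ be the (countable) subgroup generated by $\{a_n:n\in\w\}$; then $\{a_n:n\in\w\}\subset Z\cap A$.

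The contradiction is now a pigeonhole. Suppose $Z\cap A$ were $U$-bounded; then in particular $Z\cap A\subset UF$ for some finite $F$. The infinitely many distinct points $a_n$ lie in the finitely many sets $Uf$, $f\in F$, so two distinct ones $a_i,a_j$ lie in a common $Uf$; writing $a_i=u_if$ and $a_j=u_jf$ gives $a_ia_j^{-1}=u_iu_j^{-1}\in UU^{-1}=V$, contradicting the property of the sequence. Hence $Z\cap A$ is not $U$-bounded, completing the proof in this case.

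The main obstacle is the bookkeeping of left versus right translates. The inductive construction can only avoid translates by $V$ on \emph{one} side, whereas the pigeonhole applied to a covering $Z\cap A\subset UF$ produces a relation of the specific shape $a_ia_j^{-1}\in UU^{-1}$; the argument closes only when the side selected in the construction is matched to the side of the covering used for the contradiction. This matching is exactly what the symmetry of $V=UU^{-1}$ secures, and it also explains why the hypothesis is phrased with the symmetric set $UU^{-1}$ rather than with $U$ itself. (In the Abelian groups to which the lemma is ultimately applied, left and right translates coincide, so this subtlety disappears and the ``remaining case'' is literally symmetric.)
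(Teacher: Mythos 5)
Your overall strategy coincides with the paper's: manufacture an infinite $UU^{-1}$-separated subset of $A$, let $Z$ be a countable subgroup containing it, and refute a putative covering $Z\cap A\subset UF$ by the pigeonhole principle. (The paper obtains the separated set as a Zorn-maximal $UU^{-1}$-separated subset of $A\cup A^{-1}$ and proves it infinite, rather than building a sequence recursively after a case split; that difference is cosmetic.) The case you actually carry out --- $A\not\subset UU^{-1}F$ for every finite $F$ --- is complete and correct.

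The gap is the deferred case, which is \emph{not} ``literally symmetric'' for the lemma as stated. Suppose $A\subset UU^{-1}F_0$ for some finite $F_0$ but $A\not\subset F\,UU^{-1}$ for every finite $F$. The recursion can now only avoid sets of the form $\{a_0,\dots,a_{n-1}\}UU^{-1}$, so what you can arrange is $a_i^{-1}a_j\notin UU^{-1}$ for $i\ne j$. But a covering $Z\cap A\subset FU$ yields, after pigeonholing, $a_i=fu_i$ and $a_j=fu_j$, hence $a_i^{-1}a_j\in U^{-1}U$, while the covering $Z\cap A\subset UF$ yields $a_ia_j^{-1}\in UU^{-1}$; in a non-Abelian group neither conclusion contradicts $a_i^{-1}a_j\notin UU^{-1}$, since $U^{-1}U\ne UU^{-1}$ in general and left and right translates differ. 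So for an arbitrary group $X$ this half of the argument does not close as written. In fairness, the paper's own proof commits the same left/right imprecision (its separated set satisfies $z\notin yUU^{-1}$ for distinct $y,z$, yet its final pigeonhole only produces $z\in UU^{-1}y$), and the lemma is applied in the paper exclusively to Abelian groups, where the two cases and the two products coincide and your argument closes completely. You should therefore either state and prove the lemma for Abelian $X$ (which suffices for everything that follows) or genuinely supply the second case rather than asserting symmetry.
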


\begin{proof} Since $A$ is not $UU^{-1}$-bounded, for every finite set $F=F^{-1}\subset X$ we have $A\not\subset FUU^{-1}\cap UU^{-1}F$ and hence $A^{-1}\not\subset UU^{-1}F^{-1}\cup F^{-1}UU^{-1}$. By Zorn's Lemma, there exists a maximal subset $E\subset A\cup A^{-1}$ which is {\em $UU^{-1}$-separated} in the sense that $x\notin yUU^{-1}$ for any distinct points $x,y\in E$. The maximality of $E$ guarantees that for any $x\in A\cup A^{-1}$ there exists $y\in E$ such that $x\in yUU^{-1}$ or $y\in xUU^{-1}$ (and hence $x\in yUU^{-1}$). Consequently, $A\cup A^{-1}=EUU^{-1}$ and $A\subset EUU^{-1}\cap UU^{-1}E^{-1}$. The choice of $U$ ensures that the set $E$ is infinite. Then we can choose any countable subgroup $Z\subset X$ that has infinite intersection $Z\cap E$ with $E$.

We claim that the set $Z\cap A$ is not $U$-bounded. Assuming the opposite, we could find a finite subset $F\subset X$ such that $Z\cap A\subset UF$. Since the set $Z\cap E$ is infinite, for some $x\in F$ there are two distinct points $y,z\in Z\cap E$ such that $y^n,z^n\in Ux$. Then $z^n\in Ux\subset UU^{-1}y^n$, which contradict the choice of the set $E$.
\end{proof}

\subsection{Localizations}



A topological group $X$ is called {\em $p$-singular} for a prime number $p$ if for any $n\in\IN$, not divisible by $p$ the set $nX$ is dense in $X$.
For an Abelian topological group $X$ the union $T_p(X)$ of all $p$-singular subgroups of $X$ is the largest (closed) $p$-singular subgroup of $X$, see \cite[\S2.6]{DPS}.
For an Abelian discrete topological group $X$ the set $T_p(X)$ coincides with the $p$-Sylow subgroup $\{x\in X:\exists n\in\IN\;\;x^{p^n}=e\}$.

We shall need the following fact whose proof can be found in \cite[2.6.2]{DPS}.

\begin{lemma}\label{l:Tp-dense} If an Abelian topological group is a union of compact subgroups, then the subgroup $\sum_{p\in\IP}T_p(X)$ is dense in $X$.
\end{lemma}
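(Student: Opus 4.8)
The plan is to reduce the assertion to the case of a compact group and then translate everything into Pontryagin duals, where it becomes an elementary statement about torsion. First I would record a monotonicity property: since $p$-singularity of a topological group depends only on its own topology, and every subgroup $H$ of a subgroup $K\subseteq X$ inherits the same subspace topology from $K$ as from $X$, every $p$-singular subgroup of $K$ is also a $p$-singular subgroup of $X$; hence $T_p(K)\subseteq T_p(X)$ and $\sum_{p\in\IP}T_p(K)\subseteq\sum_{p\in\IP}T_p(X)$ for every subgroup $K\subseteq X$. Writing $X=\bigcup_\alpha K_\alpha$ as a union of compact subgroups, each $K_\alpha$ is closed in $X$, so the $X$-closure of $\sum_{p\in\IP}T_p(K_\alpha)\subseteq K_\alpha$ agrees with its closure in $K_\alpha$. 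Therefore, as soon as we know that $\sum_{p\in\IP}T_p(K)$ is dense in $K$ for every compact Abelian group $K$, we obtain $\overline{\sum_{p\in\IP}T_p(X)}\supseteq\overline{\sum_{p\in\IP}T_p(K_\alpha)}=K_\alpha$ for each $\alpha$, and hence $\overline{\sum_{p\in\IP}T_p(X)}\supseteq\bigcup_\alpha K_\alpha=X$. This reduces the lemma to the compact case.

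So let $K$ be compact Abelian with discrete Pontryagin dual $D=\widehat K$, and recall that annihilators set up an order-reversing bijection between closed subgroups $C\subseteq K$ and subgroups $C^\perp\subseteq D$, with $\widehat C\cong D/C^\perp$. For $n\in\IN$ the $n$-th power map $x\mapsto x^n$ of the compact group $C$ has closed image, so $nC$ is dense in $C$ iff this map is onto; dualizing, this is equivalent to injectivity of the $n$-th power map on $D/C^\perp$. Thus $C$ is $p$-singular iff $D/C^\perp$ has no nontrivial element of order coprime to $p$, i.e. iff the torsion subgroup of $D/C^\perp$ is $p$-primary; equivalently, iff $C^\perp$ satisfies $d^q\in C^\perp\Ra d\in C^\perp$ for every prime $q\ne p$. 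The smallest subgroup of $D$ with the latter property is the $p'$-torsion subgroup $t_{p'}(D)=\{d\in D:d^m=e\text{ for some }m\in\IN\text{ coprime to }p\}$, so the largest closed $p$-singular subgroup satisfies $T_p(K)^\perp=t_{p'}(D)$.

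Finally, density of $\sum_{p\in\IP}T_p(K)$ in $K$ is equivalent, via duality, to triviality of its annihilator, and the annihilator of a subgroup generated by a union of subgroups is the intersection of the annihilators. Hence it remains to check $\bigcap_{p\in\IP}T_p(K)^\perp=\bigcap_{p\in\IP}t_{p'}(D)=\{e\}$, which is immediate: a character lying in $t_{p'}(D)$ for every prime $p$ is a torsion element whose order is coprime to all primes, hence trivial (concretely, a nontrivial character of infinite order lies in no $t_{p'}(D)$, while a nontrivial one of finite order $m$ leaves $t_{p_0'}(D)$ for any prime $p_0$ dividing $m$).

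I expect the genuine obstacle to be the exact identification of $T_p(K)$ through duality: one must verify carefully that $p$-singularity of a closed subgroup corresponds precisely to $p$-primary torsion of its dual quotient (this is where the surjectivity/injectivity dictionary for the power maps on compact groups and their duals is used), and that the maximal $p$-singular subgroup is closed, so that annihilator duality applies to it. By comparison, the reduction to the compact case and the concluding arithmetic of the intersection $\bigcap_{p\in\IP}t_{p'}(D)$ are routine.
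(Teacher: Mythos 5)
Your proof is correct, and it is worth noting that the paper itself contains no proof of this lemma: it is simply quoted from Dikranjan--Prodanov--Stoyanov \cite[2.6.2]{DPS}. So your argument is not a variant of an internal proof but a self-contained substitute for an external citation. The two steps both check out. The reduction to the compact case works because compact subgroups of a Hausdorff group are closed and $p$-singularity is intrinsic to the subspace topology, so every $p$-singular subgroup of $K_\alpha$ is one of $X$ and $T_p(K_\alpha)\subset T_p(X)$. The duality step is the standard dictionary: for a closed subgroup $C$ of a compact $K$ the set $nC$ is closed, so density of $nC$ in $C$ means surjectivity of the $n$-th power map, which dualizes to injectivity of the $n$-th power map on $\widehat{K}/C^{\perp}$; hence $p$-singularity of $C$ is equivalent to $C^{\perp}$ being closed under $q$-th roots for all primes $q\ne p$, the smallest such subgroup of the discrete dual $D$ is the $p'$-torsion subgroup $t_{p'}(D)$, and $\bigcap_{p\in\IP}t_{p'}(D)=\{e\}$ gives the density. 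One point you flag yourself deserves to be made explicit but is harmless: the paper defines $T_p(X)$ as the union of all $p$-singular subgroups, whereas your duality computation identifies the largest \emph{closed} $p$-singular subgroup $C_p$; since $C_p\subset T_p(K)$ in any case, density of $\sum_{p\in\IP}C_p$ already forces density of $\sum_{p\in\IP}T_p(K)$, so you do not actually need the paper's assertion that the union of all $p$-singular subgroups is itself closed and $p$-singular. What your approach buys is transparency and self-containedness at the cost of invoking Pontryagin duality; the citation to \cite{DPS} buys brevity.
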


\begin{lemma}\label{l:Tp-comp} If $X$ is a compact Abelian topological group, then for any prime number $p$ and any number $n\in\IN$ we have $nT_p(X)=p^{n_p}T_p(X)$, where $n_p\in\w$ is the largest number such that $p^{n_p}$ divides $n$.
\end{lemma}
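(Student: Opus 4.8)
The plan is to reduce everything to the subgroup $G:=T_p(X)$ itself. Being a closed subgroup of the compact group $X$, the group $G$ is compact, and by its very definition it is a $p$-singular Abelian topological group. Writing $n=p^{n_p}m$ with $m\in\IN$ coprime to $p$, I would first record the purely algebraic identity
\[
nG=\{x^n:x\in G\}=\{(x^m)^{p^{n_p}}:x\in G\}=p^{n_p}(mG),
\]
so that the whole lemma reduces to the single assertion $mG=G$, i.e. to the claim that raising to the $m$-th power (for $m$ not divisible by $p$) is a surjection of $G$ onto itself.

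To establish $mG=G$ I would exploit the interplay between $p$-singularity and compactness. Since $G$ is Abelian, the power map $q_m\colon G\to G$, $q_m(x)=x^m$, is a continuous endomorphism, so its image $mG=q_m(G)$ is the continuous image of a compact space and is therefore compact; being compact in the Hausdorff group $G$, it is closed. On the other hand, $m$ is not divisible by $p$, so the defining property of the $p$-singular group $G$ guarantees that $mG$ is dense in $G$. A subset that is simultaneously closed and dense must be the whole space, whence $mG=G$.

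Substituting $mG=G$ into the displayed identity gives $nG=p^{n_p}(mG)=p^{n_p}G$, which is exactly the claim. The only point requiring genuine care — and in this setting it is quite mild — is the passage from density to equality: $p$-singularity delivers only the \emph{denseness} of $mG$, and it is the compactness of $G$ (via the closedness of the image $q_m(G)$) that upgrades this to the honest equality $mG=G$. Everything else is the routine factorization of the exponent $n$ together with the standard fact, already noted in the excerpt, that $nA$ is a subgroup whenever $A$ is a subgroup of an Abelian group.
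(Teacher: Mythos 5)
Your proof is correct. The paper actually states this lemma without any proof (unlike the preceding Lemma~\ref{l:Tp-dense}, which is referred to \cite[2.6.2]{DPS}), so there is nothing to compare against; your argument --- factor $n=p^{n_p}m$ with $p\nmid m$, note $nT_p(X)=p^{n_p}(mT_p(X))$, and then upgrade the density of $mT_p(X)$ (from $p$-singularity) to equality $mT_p(X)=T_p(X)$ using that the continuous image of the compact group $T_p(X)$ under $x\mapsto x^m$ is closed --- is exactly the standard route and fills the omitted proof cleanly.
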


\begin{lemma}\label{l:Tp-dense2} Let $X$ be an Abelian topological group. For any number $n\in\IN$ we have $$\sum_{p\in\IP}p^{n_p}T_p(X)\subset \overline{nX},$$
where $n_p\in\w$ is the largest number such that $p^{n_p}$ divides $n$.
Moreover, if $X$ is a union of compact subgroups, then  $\sum_{p\in\IP}p^{n_p}T_p(X)$ is dense in $\overline{nX}$.
\end{lemma}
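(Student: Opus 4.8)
The plan is to treat the two assertions separately, noting first that since $X$ is Abelian the map $x\mapsto x^n$ is a continuous endomorphism, so $nX$ is a subgroup and $\overline{nX}$ is a closed subgroup of $X$. For the inclusion $\sum_{p\in\IP}p^{n_p}T_p(X)\subset\overline{nX}$ it therefore suffices, since $\overline{nX}$ is a subgroup, to verify that $p^{n_p}T_p(X)\subset\overline{nX}$ for each prime $p$ separately. Fixing $p$, I would write $n=p^{n_p}m$ with $m$ not divisible by $p$, so that $p$-singularity of the closed subgroup $T_p(X)$ gives that $mT_p(X)$ is dense in $T_p(X)$, i.e.\ $\overline{mT_p(X)}=T_p(X)$. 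Pushing this density through the continuous map $\phi:X\to X$, $\phi(x)=x^{p^{n_p}}$, and using $\phi\big(\overline{mT_p(X)}\big)\subset\overline{\phi(mT_p(X))}$, I obtain
\[
p^{n_p}T_p(X)=\phi(T_p(X))\subset\overline{\phi(mT_p(X))}=\overline{nT_p(X)}\subset\overline{nX},
\]
where $\phi(mT_p(X))=\{(x^m)^{p^{n_p}}:x\in T_p(X)\}=nT_p(X)$. This settles the first claim.

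For the ``moreover'' part I would invoke the compactness hypothesis exactly once, through Lemma~\ref{l:Tp-dense}, which gives that $H=\sum_{p\in\IP}T_p(X)$ is dense in $X$. Applying the continuous endomorphism $\psi:x\mapsto x^n$ and using $\overline{H}=X$ yields $nX=\psi(X)\subset\overline{\psi(H)}=\overline{nH}$. The next step is the algebraic identity $nH=\sum_{p\in\IP}nT_p(X)$: every element of $H$ is a finite product of elements of the various $T_p(X)$, and raising such a product to the $n$-th power and using commutativity distributes the exponent. Since $x^n=(x^m)^{p^{n_p}}\in p^{n_p}T_p(X)$ for $x\in T_p(X)$, the trivial containment $nT_p(X)\subset p^{n_p}T_p(X)$ holds for every $p$, whence $nH\subset\sum_{p\in\IP}p^{n_p}T_p(X)$. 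Combining these gives $nX\subset\overline{\sum_{p\in\IP}p^{n_p}T_p(X)}$ and therefore $\overline{nX}\subset\overline{\sum_{p\in\IP}p^{n_p}T_p(X)}$. The first part of the lemma supplies the reverse inclusion $\sum_{p\in\IP}p^{n_p}T_p(X)\subset\overline{nX}$, so the two closures coincide, which is precisely the asserted density.

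The genuinely delicate ingredients are only the routine fact that $\phi(\overline A)\subset\overline{\phi(A)}$ for continuous $\phi$ (used in both halves) and the bookkeeping identity $nH=\sum_{p\in\IP}nT_p(X)$, both straightforward in the Abelian setting. I expect the main conceptual point to be recognizing that the two inclusions assemble into the sandwich $nX\subset\overline{\sum_{p}p^{n_p}T_p(X)}\subset\overline{nX}$, so that one never needs the exact identity $nT_p(X)=p^{n_p}T_p(X)$ of Lemma~\ref{l:Tp-comp}, but only the elementary containment $nT_p(X)\subset p^{n_p}T_p(X)$; Lemma~\ref{l:Tp-comp} rather identifies these two subgroups when $X$ is compact, whereas for the density statement only their common closure $\overline{nX}$ is relevant.
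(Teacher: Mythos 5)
Your argument is correct, and while the inclusion half coincides with the paper's (both derive $p^{n_p}T_p(X)\subset\overline{nT_p(X)}\subset\overline{nX}$ from the $p$-singularity of $T_p(X)$ and finish using that $\overline{nX}$ is a subgroup), your density half takes a genuinely different route. The paper argues locally: given an open set $U$ meeting $nX$, it picks $z$ with $z^n\in U$, passes to the compact monothetic group $\bar Z=\overline{\{z^k\}_{k\in\mathbb Z}}$ (compact because $X$ is a union of compact subgroups), applies Lemma~\ref{l:Tp-dense} to $\bar Z$, and then uses the exact identity $nT_p(\bar Z)=p^{n_p}T_p(\bar Z)$ of Lemma~\ref{l:Tp-comp} to manufacture an explicit point of $U\cap\sum_{p}p^{n_p}T_p(X)$. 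You instead apply Lemma~\ref{l:Tp-dense} globally to $X$, push the density of $H=\sum_{p}T_p(X)$ through the continuous endomorphism $x\mapsto x^n$ to get $nX\subset\overline{nH}$, use only the one-sided containment $nH\subset\sum_{p}p^{n_p}T_p(X)$ (valid since each $t^n=(t^m)^{p^{n_p}}$ for $t\in T_p(X)$), and close the sandwich $\overline{nX}\subset\overline{\sum_p p^{n_p}T_p(X)}\subset\overline{nX}$ with the first half. This dispenses with Lemma~\ref{l:Tp-comp} entirely, trading the exact computation of $nT_p$ on compact groups for a trivial containment; the paper's local argument, in exchange, confines the structure theory to a single compact monothetic subgroup and exhibits the approximating element concretely. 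Both proofs are sound; yours is marginally more economical in its use of auxiliary lemmas.
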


\begin{proof} For every $p$ the $p$-singularity of $T_p(X)$ implies that $\overline{p^{n_p}T_p(X)}=\overline{nT_p(X)}\subset \overline{nX}$. Taking into account that $\overline{nX}$ is a group, we conclude that
$\sum_{p\in\IP}p^{n_p}T_p(X)\subset \overline{nX}.$

Now assume that $X$ is a union of compact subgroups. To see that $\sum_{p\in\IP}p^{n_p}T_p(X)$ in dense in $\overline{nX}$, it suffices for any open set $U\subset X$ intersecting $nX$ to find an element $x\in U\cap \overline{nX}\cap\sum_{p\in\IP}p^{n_p}T_p(X)$. Since $U\cap nX\ne \emptyset$, there exists a point $z\in X$ such that $z^n\in U$. Choose a neighborhood $V\subset X$ of $z$ such that $V^n\subset U$. Since $X$ is a union of compact subgroups, the closure $\bar Z$ of the cyclic group $Z:=\{z^n\}_{n\in\IZ}$ is compact. By Lemma~\ref{l:Tp-dense}, the subgroup $\sum_{p\in\IP}T_p(\bar Z)$ is dense in $\bar Z$. So, we can find an element $v\in V\cap\sum_{p\in\IP}T_p(\bar Z)$. Lemma~\ref{l:Tp-comp} implies that $nT_p(\bar Z)=p^{n_p}T_p(\bar Z)$. Then $v^n\in V^n\cap \sum_{p\in\IP}nT_p(\bar Z)=V^n\cap\sum_{p\in\IP}p^{n_p}T_p(\bar Z)\subset U\cap\sum_{p\in\IP}p^{n_p}T_p(X)$.
\end{proof}

\subsection{The Bohr topology and F\o lner Theorem}

The {\em Bohr topology} on a semitopological group $X$ is the weakest topology on $X$ in which all continuous homomorphisms into compact topological groups remain continuous. The Bohr topology is not necessarily Hausdorff. Topological groups with Hausdorff Bohr topology and called {\em maximally almost periodic}.

In the proof of Theorem~\ref{t:main} we shall apply
the following fundamental theorem of F\o lner \cite{Folner}, whose proof can be found in \cite[1.4.3]{DPS}.

\begin{theorem}[F\o lner, 1954]\label{t:Folner} Let $X$ be an Abelian semitopological group such that for every $n\in\IZ$ the map $X\to X$, $x\mapsto x^n$, is continuous. If the group $X$ is $E$-bounded for some subset $E\subset X$, then for any neighborhood $U\subset X$ the set $UU^{-1}(EE^{-1})^8$ is a neighborhood of the unit in the Bohr topology of $X$.
\end{theorem}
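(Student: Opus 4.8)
The plan is to prove this by the Fourier-analytic argument of Bogolyubov, carried out on the Bohr compactification of $X$, and then to transfer the resulting neighborhood back into $X$; the transfer is where the neighborhood $U$ and the exponent $8$ are consumed, and it is the main technical obstacle. First I would reduce the conclusion to a statement about characters. Since $X$ is Abelian, the image of any continuous homomorphism into a compact group has Abelian closure, so the Bohr topology of $X$ is generated by the continuous characters $\chi\colon X\to\IT$, and a basic Bohr neighborhood of the unit has the form $B(\Lambda,\theta)=\{x\in X:|\chi(x)-1|<\theta\text{ for all }\chi\in\Lambda\}$ for a finite set $\Lambda$ of such characters and some $\theta>0$. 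Thus it suffices to produce a finite $\Lambda$ and a $\theta>0$ with $B(\Lambda,\theta)\subset UU^{-1}(EE^{-1})^8$. Let $j\colon X\to K$ be the canonical homomorphism into the Bohr compactification $K=bX$, a compact Abelian group in which $j(X)$ is dense, and let $\mu$ be the normalized Haar measure on $K$. Writing the $E$-boundedness of $X$ as $X=FE$ with $|F|=N$, I would observe that $K=\overline{j(X)}=j(F)\,C$ where $C:=\overline{j(E)}$ (a finite union of translates of the closed set $C$ is closed), so by translation invariance $1=\mu(K)\le N\mu(C)$ and hence $\mu(C)\ge 1/N>0$.

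The next step is the Bogolyubov core, done entirely inside $K$. I would consider $f=1_C\in L^2(K,\mu)$ and the fourfold convolution $g=1_C*1_{C^{-1}}*1_C*1_{C^{-1}}$, which is continuous, has nonnegative Fourier coefficients $\widehat g(\chi)=|\widehat f(\chi)|^4$, and vanishes off $CC^{-1}CC^{-1}=(CC^{-1})^2$. By Parseval, $\sum_\chi|\widehat f(\chi)|^2=\mu(C)\le 1$, so for each $\e>0$ the large spectrum $\Lambda_K:=\{\chi\in\widehat K:|\widehat f(\chi)|\ge\e\}$ is finite, of size at most $\mu(C)/\e^2$. A short estimate then shows that if $\e$ is small and $y$ lies in the Bohr set cut out by $\Lambda_K$ with a sufficiently small angle, the large-spectrum (and trivial-character) terms of $g(y)=\sum_\chi|\widehat f(\chi)|^4\chi(y)$ stay close to the positive real number $\sum_{\chi\in\Lambda_K}|\widehat f(\chi)|^4$, while the small-spectrum terms are bounded by $\e^2\sum_\chi|\widehat f(\chi)|^2\le\e^2\mu(C)$ and hence cannot cancel it. (It is precisely here that a fourth power, rather than a mere $1_C*1_{C^{-1}}$, is needed, so that the off-$\Lambda_K$ mass is controlled by $\e^2$.) Consequently $g(y)>0$, so $y\in(CC^{-1})^2$, and we obtain an open neighborhood $W$ of the unit in $K$, defined by finitely many characters of $K$, with $W\subset(CC^{-1})^2$.

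The hard part will be transferring this from $K$ back to $X$: replacing the compact closure $C=\overline{j(E)}$ by honest products of $E^{\pm1}$ while remaining inside a genuine neighborhood of the unit. Pulling back gives $j^{-1}(W)\subset j^{-1}\!\big((CC^{-1})^2\big)$, but a point of $C$ is only a \emph{limit} of points of $j(E)$, so I would use the density of $j(X)$ together with the continuity of $j$ and of the power maps $x\mapsto x^n$ (the latter being what keeps the approximating products inside the prescribed $U$-neighborhoods in the original semitopology of $X$, where joint multiplication is not available) to show that each factor $C$ may be replaced, at the cost of a controlled error, by sets of the form $j(EU)$ or $j(U^{-1}E)$. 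Tracking these substitutions through the difference set $(CC^{-1})^2$ and collecting the accumulated error into the single genuine neighborhood $UU^{-1}$ is what converts the factors $C^{\pm1}$ into the sets $E^{\pm1}$; the exact number of copies that survive this bookkeeping is what fixes the final exponent, which in the argument comes out to $8$ (the Bogolyubov step itself only needs a fixed bounded power, so the precise constant is an artifact of the transfer, not of the harmonic analysis).

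I expect essentially all of the difficulty to be concentrated in making this last approximation-and-bookkeeping step precise: ensuring the characters pulled back from $\widehat K$ remain continuous on $X$, handling the fact that $j(E)$ need not be $\mu$-measurable (which is why one works with the closed set $C=\overline{j(E)}$ and its positive measure rather than with $j(E)$ directly), and verifying that the errors can genuinely be absorbed into exactly the sets $U,U^{-1}$ and the stated number of copies of $E,E^{-1}$. The measure-theoretic and Fourier-analytic parts are standard and robust; it is the careful transfer back into the semitopological group $X$, honoring the hypothesis that only the shifts and the power maps are continuous, that carries the real weight of the theorem.
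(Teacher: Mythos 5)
First, a remark on what you are being compared against: the paper does not prove this statement at all. It is F\o lner's classical theorem, quoted verbatim with a pointer to \cite[1.4.3]{DPS} for the proof, so the only question is whether your argument stands on its own.

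It does not, and the failure is exactly where you predicted the weight would fall: the transfer step is not delicate bookkeeping but a genuine circularity. Everything through the Bogolyubov step takes place inside the Bohr compactification $K=bX$, and therefore only ever sees the Bohr topology of $X$: the map $j$ factors through the Bohr topology, and the open set $W\subset (CC^{-1})^2$ you produce is (after pulling back) a Bohr-open set. All that the inclusion $j^{-1}(W)\subset j^{-1}\bigl((CC^{-1})^2\bigr)$ gives you is that the Bohr neighborhood $j^{-1}(W)$ lies in the \emph{Bohr closure} of $(EE^{-1})^2$, equivalently in $(EE^{-1})^2\,j^{-1}(V)$ for every neighborhood $V$ of the unit of $K$. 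The ``controlled error'' you propose to absorb is thus an arbitrarily small \emph{Bohr} neighborhood $j^{-1}(V)$ of the unit of $X$, not a neighborhood in the original topology, and to place it inside $UU^{-1}(EE^{-1})^{6}$ you would need to already know that some Bohr neighborhood is contained in a set of that shape --- which is the theorem being proved. The continuity of $j$ points the wrong way for this (it only says the Bohr topology is weaker than the original one), and the hypothesis that the power maps $x\mapsto x^n$ are continuous is never actually used anywhere in your argument, a further sign that the original topology of $X$ never genuinely enters it. (A concrete test: if $X$ admits no nontrivial continuous homomorphism into a compact group, the theorem asserts $UU^{-1}(EE^{-1})^8=X$, while your argument degenerates to the tautology $X\subset X$ because $K$ is trivial.) This is why the known proofs --- F\o lner's original one and the elementary one in \cite[\S 1.4]{DPS} that the paper cites --- take a different route: a combinatorial ``local'' F\o lner lemma on the underlying discrete group, obtained via invariant means, which produces a Bohr set covered by $(EE^{-1})^4$ only up to an exceptional set of zero density; the doubling needed to kill that exceptional set is where the exponent $8$ comes from, and the factor $UU^{-1}$ and the continuity of the power maps enter through a separate argument that uses the original topology in an essential, non-circular way. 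Your measure-theoretic core is correct and standard, but as written the proof establishes only the much weaker statement that $(CC^{-1})^2$ contains a neighborhood of the unit in $bX$.
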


\section{Two Key Lemmas}\label{s:Key}

In this section we prove the two lemmas following the ideas of the proof of Lemma 2.7.3 from \cite{DPS}. The first lemma will help us to construct a nice sequence $(x_n)_{n\in\w}$ in an Abelian topological group $X$. This sequence determines a weaker group topology $\tau_\diamond$ on $X$ and in the second lemma we find conditions under which this weaker topology is Hausdorff.

\begin{lemma}\label{l:seq} Let $X$ be an Abelian group,  $(z_k)_{k\in\w}$ be a sequence in $X$, $(W_k)_{n\in\w}$ be a decreasing sequence of sets in $X$ and $(G_k)_{k\in\w}$ be a sequence of subgroups of $X$ satisfying the following conditions:
\begin{enumerate}
\item for every $k\in\IN$ the set $W_k$ contains the unit $e$ of $X$ and $W_k=W_k^{-1}$;
\item for any $n,k\in\IN$ either $nG_0\subset G_k$ or $nG_0$ is not $W_k^2G_k$-bounded.
\end{enumerate}
Then there exists a sequence of points $(x_m)_{m\in\w}$ in $G_0$ satisfying the conditions:
\begin{enumerate}
\item[(3)] for any numbers $m\in\IN$, $n\le 2^{m+1}$ and $k\le m$ with $nG_0\not\subset G_k$ we have $x_m^n\notin F_{m}W_kG_k$, where
\item[(4)] $F_m=\big\{z_i^{\e_0}x_1^{\e_1}\cdots x_{m-1}^{\e_{m-1}}:i\le m,\;\e_0,\e_1,\dots,\e_{m-1}\in\IZ,\;\max\limits_{0\le i<m}|\e_i|\le 2^{m+1}\big\}$.
\end{enumerate}
\end{lemma}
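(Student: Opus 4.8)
The plan is to construct the points $x_m$ recursively in $m$, so I focus on a single step: assuming $x_1,\dots,x_{m-1}$ (and hence the finite set $F_m$) have already been chosen, I must produce one point $x_m\in G_0$ satisfying the finitely many requirements collected in (3). For a fixed $m$ these requirements are indexed by the finite set of \emph{bad pairs} $(n,k)$ with $1\le n\le 2^{m+1}$, $k\le m$ and $nG_0\not\subset G_k$; for every such pair condition (2) tells us that $nG_0$ is \emph{not} $W_k^2G_k$-bounded. Writing $p_n\colon G_0\to G_0$, $p_n(x)=x^n$ (a homomorphism, since $X$ is Abelian), I must find a single $x_m$ lying outside all of the finitely many sets $p_n^{-1}(F_mW_kG_k)$.

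The engine for each individual pair is a separation argument. Since $W_k=W_k^{-1}$ and $G_k$ is a subgroup, one has $W_k^2G_k=(W_kG_k)(W_kG_k)^{-1}$, so the hypothesis ``$nG_0$ is not $W_k^2G_k$-bounded'' is exactly of the form treated in Lemma~\ref{l:separ}; repeating its Zorn's-lemma construction inside the symmetric set $nG_0$ yields an infinite $W_k^2G_k$-\emph{separated} set $\{a_j\}\subset nG_0$ (that is, $a_i\notin a_jW_k^2G_k$ for $i\ne j$). The matching of constants is the point: the bad set $F_mW_kG_k$ is a union of $|F_m|$ translates of the single ``ball'' $W_kG_k$, and two points lying in one such translate differ by $(W_kG_k)(W_kG_k)^{-1}=W_k^2G_k$, which is forbidden for separated points. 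Hence at most $|F_m|$ of the $a_j$ can lie in $F_mW_kG_k$, so infinitely many $a_j$ avoid the forbidden set. This settles each pair in isolation and, after choosing $y_j\in G_0$ with $y_j^n=a_j$, provides infinitely many admissible base points for that one pair.

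The crux is to satisfy all the bad pairs simultaneously with a single base point, and I would do this by induction on the number of pairs. Having found $x$ meeting all already-handled pairs, I try to correct the remaining pair $(n_0,k_0)$ by replacing $x$ with $xt$: if $t^{n}\in G_k$ for every handled pair $(n,k)$, then $(xt)^{n}=x^{n}t^{n}$ stays outside $F_mW_kG_k$ because $G_k$ absorbs $t^{n}$, so no previously gained condition is lost; and it remains to choose $t$ in the subgroup $L=\{t\in G_0:t^{n}\in G_k\text{ for all handled }(n,k)\}$ with $x^{n_0}t^{n_0}\notin F_mW_{k_0}G_{k_0}$, which is possible as soon as $n_0L=p_{n_0}(L)$ is not $W_{k_0}^2G_{k_0}$-bounded (apply the separation step of the previous paragraph to $L$ in place of $G_0$).

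I expect the main obstacle to be precisely the unboundedness of $n_0L$: the correction subgroup $L$ can shrink drastically when the groups $G_k$ are small, and in the presence of torsion (e.g.\ when $X$ carries a large $n$-torsion subgroup) $n_0L$ may well be bounded, so the naive ``finitely many bounded sets cannot cover the unbounded group $G_0$'' intuition fails. To overcome this I would first invoke Lemma~\ref{l:separ} to pass to a countable subgroup witnessing the relevant non-boundedness, and then separate the torsion and torsion-free behaviour of the power maps $p_n$, choosing the correction $t$ in a torsion-free direction along which $nG_0$ genuinely escapes every $W_k^2G_k$, and ordering the pairs so that the subgroups $G_k$ responsible for the heaviest shrinking are treated last. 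This is the technical heart of the argument and mirrors the bookkeeping in the proof of Lemma~2.7.3 of \cite{DPS}; once $x_m$ is produced, the set $F_{m+1}$ is determined by $x_1,\dots,x_m$ and the recursion continues, yielding a sequence $(x_m)_{m\in\w}$ with properties (3)--(4).
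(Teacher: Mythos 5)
Your single-pair analysis is sound and is essentially the same separation argument the paper itself uses, but your proof breaks down exactly where you say it does: at the step of satisfying all bad pairs $(n,k)$ simultaneously. Your correction scheme requires the set $n_0L$ to be non-$W_{k_0}^2G_{k_0}$-bounded, where $L=\{t\in G_0: t^n\in G_k\ \text{for all handled }(n,k)\}$, while hypothesis (2) only provides this for $n_0G_0$; since $G_0/L$ may be infinite, unboundedness need not pass from $n_0G_0$ to $n_0L$. The paragraph you offer as a remedy (separating torsion from torsion-free directions, reordering the pairs) describes a hoped-for argument rather than giving one, and it is not clear it can be made to work. So the proposal, as written, has a genuine gap at what you yourself call its ``technical heart.''

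The idea you are missing is amenability. The paper fixes an invariant finitely additive probability measure $\mu$ on all subsets of the Abelian group $G_0$ and pushes your separation argument one step further: a maximal $W_k^2G_k$-separated set $M$ (infinite by hypothesis (2)) yields infinitely many pairwise disjoint translates $x\,p_n^{-1}(zW_kG_k)$, $x\in M$, of the bad set $S_{n,k,z}=\{x\in G_0: x^n\in zW_kG_k\}$, whence $\mu(S_{n,k,z})=0$ by invariance and finite additivity. Null sets are closed under finite unions, so a single point $x_m\in G_0$ avoiding all the finitely many bad sets exists at once, with no inductive correction across pairs. Everything else in your outline (the recursion on $m$, the role of $F_m$, the counting of translates of $W_kG_k$) matches the paper.
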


\begin{proof}
The construction of the sequence $(x_k)_{k\in\IN}$ is inductive. Let $x_0=e$ and assume that for some $m\in\IN$ the points $x_0,\dots,x_{m-1}$ have been constructed. Consider the finite set $F_m$ defined by the condition $(4)$ of Lemma~\ref{l:seq}. Let $P_m$ be the set of all pairs $(n,k)\in\IN\times\IN$ such that $k\le n\le 2^{m+1}$ and $nG_0\not\subset G_k$. By the condition (2), for every $(n,k)\in P_m$ the set $nG_0$ is not $W_{k}^2G_{k}$-bounded.

The group $G_0$, being Abelian, is {\em amenable} and hence admits an invariant finitely additive invariant probability measure $\mu:\mathcal P(G_0)\to[0,1]$ defined on the Boolean algebra of all subsets of $G_0$, see \cite[0.15]{Pater}.

\begin{claim}\label{cl:measure} For any $(n,k)\in P_m$ and any $z\in F_m$ the set $S_{n,k,z}=\{x\in G_0:x^n\in zW_{k}G_{k}\}$ has measure $\mu(S_{n,k,z})=0$.
\end{claim}

\begin{proof} Using Zorn's Lemma, choose a maximal subset $M\subset G_0$ such that $x^{n}\notin y^{n}W_{k}^2G_{k}$ for any distinct points $x,y\in M$. The maximality of $M$ guarantees that for every $x\in G_0$ there exists $y\in M$ such that $x^{n}\in y^{n}W_{k}^2G_{k}$ or $y^{n}\in x^{n}W^2_{k}G_{k}$ and hence $x^{n}\in y^{n}W^{-2}_{k}G_{k}=y^nW^2_{k}G_{k}$. This means that $nX\subset MW^2_{k}G_{k}$.
Since the set $nG_0$ is not $W^2_{k}G_{k}$-bounded, the set $M$ is infinite.

We claim that for any distinct points $x,y\in M$ the sets $x^{n}W_{k}G_{k}$ and $y^{n}W_{k}G_{k}$ are disjoint. Assuming that the intersection
$x^{n}W_{k}G_{k}\cap y^{n}W_{k}G_{k}$ contains some point
$z$, we conclude that $y^n\in zW_{k}^{-1}G_k^{-1}\subset x^nW_kG_kW_k^{-1}G_k^{-1}=x^nW_k^2G_k$, which contradicts the choice of the set $M$.

Therefore, the family $(x^{n}W_{k}G_{k})_{x\in M}$ is disjoint and so is the family $(x^{n}zW_{k}G_{k})_{x\in M}$. Now consider the homomorphism $p:G_0\to G_0$, $p:x\mapsto x^{n}$ and observe that the family $\big(p^{-1}(x^nzW_{k}G_{k})\big)_{x\in M}=\big(xp^{-1}(zW_{k}G_{k})\big)_{x\in M}$ is disjoint. Now the additivity and invariantness of the measure $\mu$ ensures that the set $p^{-1}(zW_{k}G_{k})=\{x\in X:x^n\in zW_{k}G_{k}\}=S_{n,k,z}$ has measure $\mu(S_{n,k,z})=0$.
\end{proof}

The additivity of the measure $\mu$ and Claim~\ref{cl:measure} imply that the set $S=\bigcup_{(n,k)\in P_m}\bigcup_{z\in F_m}S_{n,k,z}$ has measure zero. Consequently, we can find a point $x_m\in G_0\setminus S$. It is clear that this point $x_m$ satisfies the condition (3) of Lemma~\ref{l:seq}.
\end{proof}

Let $X$ be an Abelian topological group and $(x_n)_{n\in\IN}$ be a sequence of points in $X$. For every positive real number $\delta$ consider the set $$\diamondsuit_\delta=\Big\{x_1^{\e_1}\cdots x_m^{\e_m}:m\in\IN,\;\e_1,\dots,\e_m\in\IZ,\;\;\sum_{i=1}^m\frac{|\e_i|}{2^i}<\delta,\;\sum_{i=1}^n\e_i=0\Big\}\subset X.$$ Let $\Tau$ be the topology of $X$ and $\mathcal B$ be the largest precompact group topology on the group $X$, i.e., $\mathcal B$ is the Bohr topology of the group $X$, endowed with the discrete topology. Let $\Tau_e=\{T\in\Tau:e\in T=T^{-1}\}$ and $\mathcal B_e=\{B\in\mathcal B:e\in B=B^{-1}\}$ be the families of open symmetric neighborhoods of the unit $e$ in the topological groups $(X,\Tau)$ and $(X,\mathcal B)$, respectively.

It is easy to see that the family $$\tau_e:=\{T\cdot(\diamondsuit_\delta\cap B):T\in\Tau,\;\delta>0,\;B\in\mathcal B\}$$ is a neighborhood base at the unit for some group topology $\tau_\diamond$ on $X$.
This topology $\tau_\diamond$ will be called {\em the topology, $\diamondsuit$-generated by} the sequence $(x_n)_{n\in\w}$, and this sequence will be called {\em $\diamondsuit$-generating} the topology $\tau_\diamond$.

If the topology $\tau_\diamond$ is Hausdorff, then its $\diamondsuit$-generating sequence $(x_n)_{n\in\w}$ will be called {\em $\diamondsuit$-Hausdorff}. In this case $X_\diamond:=(X,\tau_\diamond)$ is a Hausdorff topological group and we can consider its completion $\bar X_\diamond$, which is a complete topological group.

The following lemma detects $\diamondsuit$-Hausdorff sequences.

\begin{lemma}\label{l:Hausdorff} Let $X$ be an Abelian topological group, $(z_n)_{n\in\w}$ be a sequence in $X$, $(W_k)_{n\in\w}$ be a decreasing sequence of neighborhoods of the unit $e$ in $X$ and $(G_k)_{k\in\w}$ be a sequence of closed subgroups of $X$ satisfying the following conditions:
\begin{enumerate}
\item for every $k\in\IN$ we have $W_k=W_k^{-1}$ and $W_{k+1}^2\subset W_k$;
\item for any $n\in\IN$ and $k\le n$ either $nG_0\subset G_k$ or $nG_0$ is not $W_k^2G_k$-bounded;
\item the closed subgroup $G_\w:=\bigcap_{k\in\w}G_k$ is compact;
\item for every neighborhood $U\subset X$ of the unit there exists $k\in\IN$ such that $G_k\subset G_\w U$.
\end{enumerate}
If a sequence $(x_m)_{m\in\w}$ of points of $G_0$ satisfies the condition \textup{(3)} of\/ {\rm Lemma~\ref{l:seq}}, then it is $\diamondsuit$-Hausdorff and the set $\{x_m\}_{m\in\w}$ is totally bounded in the topological group $(X,\tau_\diamond)$.
\end{lemma}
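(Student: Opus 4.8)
The plan is to work with the neighborhood base $\tau_e$ at the unit and to prove that $\tau_\diamond$ is Hausdorff by checking $\bigcap_{V\in\tau_e}V=\{e\}$. First I would record the elementary identity $\bigcap_{T\in\Tau_e}T\cdot S=\overline{S}^{\,\Tau}$, valid for every $S\subseteq X$ (a point $x$ lies in $\overline S$ iff every basic neighborhood $xT$ meets $S$, i.e. iff $x\in TS$ for all $T\in\Tau_e$). Applying this with $S=\diamondsuit_\delta\cap B$ reduces Hausdorffness to the assertion
\[
\bigcap_{\delta>0}\ \bigcap_{B\in\mathcal B_e}\overline{\diamondsuit_\delta\cap B}^{\,\Tau}=\{e\}.
\]
So I must show that a point $g$ lying in the $\Tau$-closure of $\diamondsuit_\delta\cap B$ for every $\delta>0$ and every Bohr-neighborhood $B$ is the unit. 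Since $\diamondsuit_\delta\subseteq G_0$ and $G_0$ is closed, such a $g$ belongs to $G_0$.

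The combinatorial heart is a peeling lemma drawn from condition (3) of Lemma~\ref{l:seq}. Fix $k$ and take $\delta<2^{-k}$; then in any factorization $x_1^{\e_1}\cdots x_N^{\e_N}\in\diamondsuit_\delta$ every nonzero exponent carries index $>k$, and $|\e_i|<\delta 2^i$. I claim every $w\in\diamondsuit_\delta$ with $w\in W_kG_k$ in fact lies in $G_k$. This I would prove by induction on the number of nonzero factors, peeling off the factor $x_m^{\e_m}$ of largest index $m$: writing $w=v\,x_m^{\e_m}$, the exponent bounds give $v\in F_m$ (take the $z$-exponent zero), and $F_m=F_m^{-1}$, so the relation $w\in W_kG_k$ forces $x_m^{\e_m}=v^{-1}w\in F_mW_kG_k$; by symmetry of $F_mW_kG_k$ and the contrapositive of Lemma~\ref{l:seq}(3) (with $n=|\e_m|\le 2^{m+1}$ and $k\le m$) this yields $nG_0\subseteq G_k$, whence $x_m^{\e_m}\in nG_0\subseteq G_k$ and $v=w\,x_m^{-\e_m}\in W_kG_k$, closing the induction. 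Passing to $\Tau$-closures, this gives, for $\delta<2^{-k}$,
\[
\overline{\diamondsuit_\delta}^{\,\Tau}\cap\mathrm{int}(W_kG_k)\subseteq G_k,
\]
since a point of the open set $\mathrm{int}(W_kG_k)$ that is a $\Tau$-limit of $\diamondsuit_\delta$ is a limit of $\diamondsuit_\delta\cap W_kG_k\subseteq G_k$.

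From here the argument splits, and the second branch is where I expect the main obstacle to sit. Using condition (4) to pick $k$ with $G_k\subseteq G_\w U$ for a prescribed neighborhood $U$, the displayed inclusion shows that any $g$ in the intersection lying inside $\mathrm{int}(W_kG_k)$ already lies in $G_k\subseteq G_\w U$; letting $U$ shrink and using that $G_\w$ is closed then puts such $g$ in $G_\w$. The genuinely hard point is to exclude the complementary possibility that $g$ is approximated by elements of $\diamondsuit_\delta$ lying \emph{far} from $G_\w$ (those carrying a factor $x_m^{\e_m}$ with $|\e_m|G_0\not\subseteq G_k$): the sets $\diamondsuit_\delta$ alone cannot rule this out, because the points $x_m$ are not assumed to cluster at $G_\w$. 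Here the intersection with $B$ is indispensable, and this is precisely the place where F\o lner's Theorem~\ref{t:Folner} enters — the total boundedness of $\{x_m\}_{m\in\w}$ in $\tau_\diamond$ (proved below) is to be converted via F\o lner into Bohr-neighborhoods $B$ that suppress the components of $\diamondsuit_\delta$ transverse to $G_\w$, forcing $\overline{\diamondsuit_\delta\cap B}^{\,\Tau}$ into a neighborhood of $G_\w$. Once $g$ is trapped in the compact group $G_\w$, the Bohr topology is Hausdorff there and, invoking the localization Lemmas~\ref{l:Tp-dense}--\ref{l:Tp-dense2} on compact Abelian groups, one separates $g\ne e$ from $e$ to conclude $g=e$; this mirrors the mechanism of Lemma~2.7.3 in \cite{DPS}.

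Finally I would prove the total boundedness of $\{x_m\}_{m\in\w}$ directly, independently of conditions (1)--(4). Given a basic neighborhood $T(\diamondsuit_\delta\cap B)$ it suffices to cover $\{x_m\}$ by finitely many translates of $\diamondsuit_\delta\cap B$. Choosing $m_0$ with $2^{-m_0+1}<\delta$, for all $m,m'\ge m_0$ the element $x_{m'}^{-1}x_m$ has exponent-sum $0$ and weighted sum $2^{-m'}+2^{-m}<\delta$, hence lies in $\diamondsuit_\delta$; picking $B_1\in\mathcal B_e$ with $B_1^2\subseteq B$ and using that $(X,\mathcal B)$ is precompact, I cover the tail $\{x_m:m\ge m_0\}$ by finitely many Bohr-balls $c_jB_1$, so that within one ball any two tail elements differ by an element of $B_1^2\cap\diamondsuit_\delta\subseteq B\cap\diamondsuit_\delta$. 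Representatives of these finitely many balls, together with $x_0,\dots,x_{m_0-1}$, give the required finite set. This secures total boundedness and, fed back through F\o lner, furnishes the Bohr-neighborhoods needed to close the gap in the Hausdorffness argument above.
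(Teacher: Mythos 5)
Your peeling lemma is essentially the paper's Claim~\ref{cl:H1} ($\diamondsuit_1\cap W_kG_k\subset G_k$), and your covering argument for the total boundedness of $\{x_m\}_{m\in\w}$ matches the paper's Claim~\ref{cl:tot-bound}; both are sound (though note that peeling off $x_m^{\e_m}$ destroys the constraint $\sum_i\e_i=0$, so your induction must run over all short words with $|\e_i|<2^{i-k}$ rather than over $\diamondsuit_\delta$ itself --- the paper avoids this by arguing directly with the largest index $j$ for which $x_j^{\e_j}\notin G_k$). The Hausdorffness argument, however, has two genuine gaps. First, your treatment of a point $g\ne e$ with $g\notin G_\w$ is circular: to conclude $g\in G_k$ from your displayed inclusion you must already know $g\in\mathrm{int}(W_kG_k)$, and ``letting $U$ shrink'' changes $k$, so you never get $g$ into $\bigcap_k G_k$. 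The paper needs a second, separate combinatorial statement here (Claim~\ref{cl:H2}): for each individual $x\in\diamondsuit_1\setminus G_k$ there is an $\e>0$ (namely $\e=2^{-n}$ for $n$ the length of a representation of $x$) with $x\notin\diamondsuit_\e W_k$; combined with the closedness of $G_k$ and $W_{k+1}^2\subset W_k$ this produces a basic $\tau_\diamond$-neighborhood $T(\diamondsuit_\e\cap B)$ missing $g$. Nothing in your proposal plays this role.

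Second, the case $g\in G_\w\setminus\{e\}$ --- which you correctly identify as the crux --- is not proved but only gestured at, and the gesture points the wrong way. F\o lner's theorem is not fed the total boundedness of $\{x_m\}$ in $\tau_\diamond$, and the localization lemmas on $T_p$ play no role in this step. The actual mechanism is: pick $U$ with $g\notin U^{37}$; by condition (4) and compactness of $G_\w$ choose $k$ and a finite $F$ with $G_k\subset FU^2$; apply F\o lner's Theorem~\ref{t:Folner} to the group $G_k$ with the \emph{discrete} topology and $V=G_k\cap U^2$ to see that $(VV^{-1})^9$ is a Bohr neighborhood of $e$ in $G_k$; use Baer's extension theorem for homomorphisms into divisible groups to identify the Bohr topology of $G_k$ with the one induced from $X$, yielding $B\in\mathcal B_e$ with $B\cap G_k\subset U^{36}$; finally, for $T=W_k\cap U$, any $b\in\diamondsuit_1\cap B$ with $g\in Tb$ lies in $\diamondsuit_1\cap G_\w W_k\subset G_k$ by Claim~\ref{cl:H1}, hence in $U^{36}$, forcing $g\in U^{37}$ --- a contradiction. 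Without this chain (in particular without Baer's theorem to relate the two Bohr topologies) your ``suppression of the components transverse to $G_\w$'' remains a hope rather than an argument.
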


\begin{proof} We separate the proof of this lemma into four claims.

\begin{claim}\label{cl:H1} For every $k\in\w$ we have $\diamondsuit_1\cap W_{k} G_k\subset G_{k}$.
\end{claim}

\begin{proof} To derive a contradiction, assume that the set
$(\diamondsuit_1\cap W_k G_k)\setminus G_k$ contains some point $x$. Write $x$ as $x=x_1^{\e_1}\cdots x_l^{\e_l}$ for some number $l\in\IN$ and some integer numbers $\e_1,\dots,\e_l$ such that $\sum_{i=1}^l\frac{|\e_i|}{2^i}<1$.  Since $x\notin G_k$, for some $i\le l$, we have $x_i^{\e_i}\notin G_k$.
Let $j\le l$ be the largest number such that $x_j^{\e_j}\notin G_k$ and thus $\e_jG_0\not\subset G_k$. The maximality of $j$ guarantees that  $x_i^{\e_i}\in G_k$ for every $i\in(j,l]$, and hence
$$W_kG_{k}\ni x=x_1^{\e_1}\cdots x_{j}^{\e_j}G_k$$and
$x_j^{\e_j}\in x_1^{-\e_1}\cdots x_{j-1}^{-\e_{j-1}}W_kG_kG_k^{-1}\subset F_jW_kG_k$, which contradicts the property (3) of Lemma~\ref{l:seq}.
\end{proof}

\begin{claim}\label{cl:H2} For any $k\in\IN$ and any $x\in \diamondsuit_1\setminus G_k$ there exists $\e>0$ such that $x\notin\diamondsuit_\e W_{k}$.
\end{claim}

\begin{proof} Write $x$ as $x=x_1^{\delta_1}\cdots x_n^{\delta_n}$ for some $n\in\IN$ and some integer numbers $\delta_1,\dots,\delta_n$ such that $\sum_{i=1}^n\frac{|\delta_i|}{p^i}<1$. Since $x\notin G_k$, for some $i\le n$ we have $x_i^{\delta_i}\notin G_k$.

We claim that the real number $\e=\frac1{2^{n}}$ has the required property.
To derive a contradiction, assume that $x\in \diamondsuit_\e W_{k}$ and find a point $y\in\diamondsuit_\e$ such that $x\in yW_{k}$.
Write $y$ as $y=x_1^{\e_1}\cdots x_m^{\e_m}$ for some integer numbers $m>n$ and $\e_1,\dots,\e_m$ such that $\sum_{i=1}^m\frac{\e_i}{2^i}<\e=\frac1{2^{n}}$. The latter inequality implies that $\e_i=0$ for all $i\le n$.

The inclusion $x\in yW_{k}$ implies that $y^{-1}x\in W_{k}$ and $y^{-1}x=x_1^{\gamma_1}\cdots x_m^{\gamma_m}$ where $\gamma_i=\delta_i$ for $i\le n$ and $\gamma_i=-\e_i$ for $n<i\le m$.
It follows that $|\gamma_i|\le\max\{|\delta_i|,|\e_i|\}<2^i$.
Let $j\le m$ be the largest number such that $x_j^{\gamma_j}\notin G_k$. Such number $j$ exists since $x_i^{\gamma_i}=x_i^{\delta_i}\notin G_k$ for some $i\le n$. Then $\gamma_iG_0\not\subset G_k$.

The maximality of $j$ guarantees that $x_{i}^{\delta_i}\in G_k$ for all $i\in(j,m]$.
It follows that $W_{k}\ni y^{-1}x\in x_1^{\gamma_1}\cdots x_{j-1}^{\gamma_{j-1}}x_j^{\gamma_j}G_k$ and hence
$x_j^{\gamma_{j}}\in x_1^{-\gamma_1}\cdots x_{j-1}^{-\gamma_{j-1}}W_{k}G_k\subset F_jW_kG_k$, which contradicts the condition (3) of Lemma~\ref{l:seq}.
\end{proof}

\begin{claim} The topology $\tau_\diamond$ is Hausdorff.
\end{claim}

\begin{proof} Given an element $x\in X\setminus\{e\}$, we should find $T\in\Tau_e$, $\e>0$ and $B\in\mathcal B_e$ such that $x\notin T\cdot(\diamondsuit_\e\cap B)$. If $x$ does not belong to the closure $\bar H$ of the subgroup $H$, generated by the set $\{x_n\}_{n\in\IN}$, then we can find a neighborhood $T=T^{-1}\in\Tau_e$ of unit in $X$ such that $xT\cap H=\emptyset$ and conclude that $T\diamondsuit_1 \subset TH$ does not contain $x$.

Next, we consider the case $x\in \bar H\setminus G_\w=\bigcup_{k\in\w}\bar H\setminus G_k$ and hence $x\notin G_k$ for some $k\in\IN$. Since the subgroup $G_k$ is closed in $X$, there exists a neighborhood $T\in\Tau_e$ of the unit such that $T=T^{-1}\subset W_{k+1}$ and $xT\cap G_k=\emptyset$. If $x\notin T\diamondsuit_1$, then, we are done. So, assume that $x\in T\diamondsuit_1$. In this case there exists $z\in\diamondsuit_1$ such that $x\in Tz$. It follows that $z\in xT^{-1}\cap \diamondsuit_1=xT\cap\diamondsuit_1\subset \diamondsuit_1\setminus G_k$.  Applying Claim~\ref{cl:H2}, find $\e>0$ such that $z\notin \diamondsuit_\e W_{k}$. We claim that $x\notin\diamondsuit_\e W_{k+1}$. In the opposite case, $z\in xT^{-1}\subset xW_{k+1}\subset \diamondsuit_\e W_{k+1}^2\subset \diamondsuit_\e W_{k}$, which contradicts the choice of $\e$.

Finally, we consider the case $x\in G_\w$. Choose a neighborhood $U\in\Tau_e$ of the unit such that $U=U^{-1}$ and $x\notin U^{37}$. By the condition (4) of Lemma~\ref{l:Hausdorff}, there exist $k\in\IN$ such that $G_k\subset G_\w U$. By the compactness of $G_\w$, there exists a finite set $F\subset G_\w\subset G_k$ such that $G_\w\subset FU$. Then $G_k\subset FU^2$. Consider the neighborhood $V=G_k\cap U^2$ of $e$ in the group $G_k$ and observe that $G_k=FV$. By F\o lner Theorem \ref{t:Folner}, the set $(VV^{-1})^9$ is a neighborhood of the unit in the Bohr topology of the group $G_k$ endowed with the discrete topology. The Baer Theorem \cite[4.1.2]{Rob} on extensions of homomorphisms into divisible groups implies that the Bohr topology of the group $G_k$ coincides with the subspace topology inherited from the Bohr topology on the group $X$. Consequently, there exists a Bohr neighborhood $B\in\mathcal B_e$ or $e$ such that $B\cap  G_k\subset (VV^{-1})^9\subset U^{36}$.
Let $T=W_{k}\cap U$.
We claim that the neighborhood $T\cdot(\diamondsuit_1 \cap B)\in\tau_\diamond$ of $e$ does not contain the point $x$. Assuming that $x\in T\cdot(\diamondsuit_1\cap B)$, we can find points $t\in T$ and $b\in\diamondsuit_1\cap B$ with $x=tb$ and conclude that $b=xt^{-1}\in G_\w T^{-1}\subset G_\w W_k$ and $b\in\diamondsuit_1\cap G_\w W_k\subset G_k$ according to Claim~\ref{cl:H1}. Then $b\in B\cap G_k\subset U^{36}$ and $x\in Tb\subset UU^{36}=U^{37}$, which contradicts the choice of the neighborhood $U$.
\end{proof}

\begin{claim}\label{cl:tot-bound} The set $\{x_n\}_{n\in\w}$ is totally bounded in the topological group $(X,\tau_\diamond)$.
\end{claim}

\begin{proof} Given a neighborhood $U\in\tau_\diamond$ of $e$, we need to find a finite set $F\subset X$ such that $\{x_n\}_{n\in\w}\subset FU$. We can assume that the neighborhood $U$ is of basic form
$U=T(\diamondsuit_\e\cap B)$ for some $T=T^{-1}\in\Tau_e$, $\e>0$ and $B=B^{-1}\in\mathcal B_e$.

Let $F\subset \{x_n\}_{n\in\w}$ be a maximal set such that $x\notin yU$ for any distinct elements $x,y\in F$. The maximality of $F$ guarantees that for any point $x\in \{x_n\}_{n\in\w}$ there exists a point $y\in F$ such that $x\in yU$ or $y\in xU$ (and $x\in yU^{-1}=yU$). So, $\{x_n\}_{n\in\w}\subset FU$. We claim that the set $F$ is finite.
To derive a contradiction, assume that $F$ is infinite.

Choose a neighborhood $D=D^{-1}\in\mathcal B_e$ of $e$ such that $D^2\subset B$. The total boundedness of the topology $\mathcal B$ yields a finite set $E\subset X$ such that $X=ED$. By the Pigeonhole principle, there exists a point $z\in E$ such that the set $zD$ contains two distinct points $x_n,x_m\in E$ with $\frac1{2^n}+\frac1{2^m}<\e$. The last inequality ensures that $x_nx_m^{-1}\in\diamondsuit_\e$.
 On the other hand, $x_nx_m^{-1}\in (zD)(zD)^{-1}=DD^{-1}\subset B$ and hence $x_nx_m^{-1}\in\diamondsuit_\e\cap B\subset U$ and $x_n\in x_mU$, which contradicts the definition of the set $F$. This contradiction shows that the set $F$ is finite and the set $\{x_k\}_{k\in\w}\subset FU$ is totally bounded.
\end{proof}
\end{proof}

\section{Topological groups of compact exponent}

We recall that a topological group $X$ has {\em compact exponent} if for some $n\in\IN$ the set $nX=\{x^n:x\in X\}$ has compact closure in $X$.

\begin{lemma}\label{l:CE} If a complete Abelian topological group has compact exponent, then $X$ contains a $\diamondsuit$-Hausdorff sequence $(x_m)_{m\in\w}$ which has an accumulation point $x_\infty$ in the completion $\bar X_\diamond$ of the topological group $X_\diamond:=(X,\tau_\diamond)$ such that for every $n\in\IN$ the power $x^n_\infty$ belongs to $X_\diamond$ if and only if the set $nX$ is precompact.
\end{lemma}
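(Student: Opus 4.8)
The plan is to apply the two Key Lemmas to a carefully chosen sequence of ingredients built from the compact exponent hypothesis. Suppose $n\in\IN$ is a compact exponent of $X$, so $\overline{nX}$ is compact. I would first set up the data $(z_k)_{k\in\w}$, $(W_k)_{k\in\w}$, and $(G_k)_{k\in\w}$ required by Lemma~\ref{l:seq} and Lemma~\ref{l:Hausdorff}. For the subgroups, the natural choice is to take $G_0=X$ and let $G_k$ be a decreasing sequence of closed subgroups whose intersection $G_\w=\bigcap_{k\in\w}G_k$ is the compact subgroup $\overline{nX}$ (or a compact subgroup containing it), arranged so that condition (4) of Lemma~\ref{l:Hausdorff} holds, i.e.\ the $G_k$ shrink down to $G_\w$ in the sense that every neighborhood $U$ of $e$ eventually satisfies $G_k\subset G_\w U$. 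The sequence $(W_k)$ should be a decreasing chain of symmetric neighborhoods of $e$ with $W_{k+1}^2\subset W_k$. The dichotomy condition (2)---that for each $m$ and $k\le m$ either $mG_0\subset G_k$ or $mG_0$ is not $W_k^2G_k$-bounded---must be verified or engineered from the structure of $X$; this is where the precompactness/non-precompactness of the various sets $mX$ enters.

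Once the ingredients satisfy hypotheses (1)--(4) of Lemma~\ref{l:Hausdorff}, Lemma~\ref{l:seq} produces a sequence $(x_m)_{m\in\w}$ in $G_0=X$ obeying condition (3), and Lemma~\ref{l:Hausdorff} then guarantees simultaneously that $(x_m)_{m\in\w}$ is $\diamondsuit$-Hausdorff and that $\{x_m\}_{m\in\w}$ is totally bounded in $X_\diamond=(X,\tau_\diamond)$. Total boundedness is the crucial output: since $\bar X_\diamond$ is complete, the closure of $\{x_m\}_{m\in\w}$ in $\bar X_\diamond$ is compact, so the sequence has an accumulation point $x_\infty\in\bar X_\diamond$. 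I would extract such an accumulation point and then devote the main effort to analyzing the powers $x_\infty^n$.

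The heart of the statement is the biconditional: for every $n\in\IN$, $x_\infty^n\in X_\diamond$ iff $nX$ is precompact. I expect this to be the main obstacle. One direction should follow by controlling the powers of the $x_m$: because $x_\infty$ is an accumulation point, $x_\infty^n$ is an accumulation point of $(x_m^n)_{m\in\w}$ (powering is continuous in $\bar X_\diamond$ in the relevant sense), so membership $x_\infty^n\in X_\diamond$ reflects whether the powers $x_m^n$ accumulate inside $X$ rather than on a genuinely new completion point. The design of the $x_m$ via condition (3) of Lemma~\ref{l:seq}---which forbids $x_m^n$ from landing in $F_mW_kG_k$ precisely when $nG_0\not\subset G_k$---is exactly what should force $x_\infty^n$ to escape $X_\diamond$ when $nX$ fails to be $W_k^2G_k$-bounded, and hence fails to be precompact, for the relevant $k$. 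Conversely, when $nX$ is precompact one must show $x_\infty^n$ stays inside $X_\diamond$, which should come from the fact that $nG_0\subset G_k$ for all $k$ in that case, placing the powers in the compact part $G_\w$ where completeness of $X$ keeps limits inside $X$.

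The delicate point, and where I would spend the most care, is correctly matching the index $n$ in ``$nX$ precompact'' with the same $n$ in ``$x_\infty^n\in X_\diamond$'' across all $n$ simultaneously, using a single sequence $(x_m)$. This requires that the subgroup chain $(G_k)$ and the dichotomy (2) be set up so that, uniformly in $n$, the inclusion $nG_0\subset G_k$ (eventually in $k$) is equivalent to the precompactness of $nX$. I would verify this equivalence using Lemma~\ref{l:separ} to pass between total boundedness and boundedness witnessed by the countable subgroup generated by the $x_m$, together with Lemma~\ref{l:Tp-dense2} to understand which powers $nX$ have precompact closure in terms of the compact subgroup $\overline{nX}=G_\w$. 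The completeness of $X$ is essential throughout to ensure that precompactness of $nX$ is equivalent to compactness of $\overline{nX}$ and that accumulation points forced to lie in $G_\w$ actually belong to $X_\diamond$ rather than only to $\bar X_\diamond$.
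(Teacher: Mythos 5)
Your overall architecture matches the paper's: feed suitable data into Lemmas~\ref{l:seq} and \ref{l:Hausdorff}, extract a totally bounded $\diamondsuit$-Hausdorff sequence, take an accumulation point $x_\infty$ in $\bar X_\diamond$, and prove the biconditional. But the proposal leaves unfilled exactly the steps that carry the content, and one of your concrete choices would break the argument. Taking $G_0=X$ does not work: the finite sets $F_m$ in Lemma~\ref{l:seq} only protect the powers $x_m^n$ against the countably many centers $z_i$, and the hard direction of the biconditional needs the hypothetical element $x_\infty^n\in\bar G_0$ to be approximable by some $z_l$ (one needs $x_\infty^n\in z_lW_s$ for some $l$). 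The paper therefore first fixes $s$ as the \emph{smallest} compact exponent and uses Lemma~\ref{l:separ} to produce a countable subgroup $Z=\{z_k\}_{k\in\w}$ with $nZ$ non-precompact for all $n<s$, and then sets $G_0=\bar Z$ and $G_k=\overline{sZ}$ for all $k\ge 1$ (a constant, already compact tail); your decreasing chain shrinking to $\overline{nX}$ for one fixed compact exponent $n$ is not what is needed, since the equivalence must hold for all $n$ simultaneously and is governed by the minimal exponent $s$, not by an arbitrary one.

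More seriously, the direction ``$nX$ not precompact $\Rightarrow x_\infty^n\notin X_\diamond$'' is only gestured at, and it is here that both hypothesis (2) of the Key Lemmas and the final contradiction are earned. One must first prove that for every prime divisor $q$ of $s$ the subgroup $q^{s_q-1}T_q(\bar Z)$ is not precompact (this uses the minimality of $s$ together with Lemmas~\ref{l:Tp-dense} --\ref{l:Tp-dense2}), and choose $W_0$ so that these subgroups are not $W_0^2 sZ$-bounded; this is what validates the dichotomy in condition (2). Then, assuming $x_\infty^n\in X_\diamond$ while $nX$ is not precompact, one observes $s\nmid n$, picks a prime $q$ with $q^{s_q}\nmid n$, writes $x_\lambda^n y^{-1}=x_1^{\delta_1}\cdots x_m^{\delta_m}$ with $y\in\diamondsuit_\e$, isolates the largest index $j$ with $q^{s_q}\nmid\delta_j$, and --- the step your sketch has no analogue of --- raises everything to the power $s'=s/q^{s_q}$ so that $x_j^{\delta_js'}$ lands in $F_jW_1G_1$ while $|\delta_is'|<2^{i+1}$ and $\delta_js'Z\not\subset\overline{sZ}=G_1$, contradicting condition (3) of Lemma~\ref{l:seq}. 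Without the minimal exponent $s$, the non-precompactness of $q^{s_q-1}T_q(\bar Z)$, and this power-$s'$ device, the claimed equivalence for all $n$ with a single sequence is not established.
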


\begin{proof} Let $s\in\IN$ be the smallest number such that $sX$ is precompact. Using Lemma~\ref{l:separ}, find a countable subgroup $Z=\{z_k\}_{k\in\w}\subset X$ such that for every $n<s$ the set $nZ$ is not precompact.

Put $G_0=\bar Z$ and $G_k=\overline{sZ}$ for all $k\in\IN$.

 For every prime number $p$ let $s_p\in\w$ be the largest number such that $p^{s_p}$ divides $s$. The compactness of exponent implies that $X$ is a union of compact subgroups. By Lemma~\ref{l:Tp-dense2}, the subgroup $\sum_{p\in\IP}p^{s_p}T_p(\bar Z)$ is dense in $\overline{sZ}$.

\begin{claim}\label{cl:tq} For any prime divisor $q$ of $s$ the subgroup $p^{s_q-1}T_q(\bar Z)$ is not precompact.
\end{claim}

\begin{proof} Consider the number $t=s/q$ and for every $p\in\IP$ let $t_p$ be the largest number such that $p^{t_p}$ divides $t$. Observe that $t_{q}=s_q-1$ and $t_p=s_p$ for $p\ne q$. By Lemma~\ref{l:Tp-dense2}, the subgroup $\sum_{p\in\IP}p^{t_p}T_p(\bar Z)$ is dense in $\overline{tZ}$.
Assuming that $q^{s_q-1}T_q(\bar Z)$ is precompact, we would conclude that the subgroup $\sum_{p\in\IP}p^{t_p}T_p(\bar Z)=q^{s_q-1}T_q(\bar Z)\cdot \sum_{p\in\IP}p^{s_p}T_p(\bar Z)\subset q^{s^q-1}T_q(\bar Z)\cdot \overline{sX}$ is precompact
and so is its closure $\overline{tZ}$. But this contradicts the minimality of $s$.
\end{proof}

Choose a neighborhood $W_0\subset X$ of the unit such that for every divisor $p$ of $s$ the (non-precompact) subgroup $p^{s_p-1}T_p(\bar Z)$ is not $W_0^2sZ$-bounded. Choose a decreasing sequence $(W_k)_{k\in\IN}$ of neighborhoods of the unit such that $W_k=W_k^{-1}$ and $W_{k+1}^2\subset W_k$.

 It is easy to see that the sequences $(W_k)_{k\in\w}$ and $(G_k)_{k\in\w}$ satisfy the conditions (1)--(2) of Lemma~\ref{l:seq} and (1)--(4) of Lemma~\ref{l:Hausdorff}. Applying these lemmas, we obtain a $\diamondsuit$-Hausdorff sequence $(x_m)_{m\in\w}$ in $G_0$ satisfying the conditions (3),(4) of Lemma~\ref{l:seq} and such that the set $\{x_n\}_{n\in\w}$ is totally bounded in the Hausdorff topological group $X_\diamond=(X,\tau_\diamond)$ and hence has an  accumulation point $x_\infty\in \bar X_\diamond$ in the completion $\bar X_\diamond$ of  $X_\diamond=(X,\tau_\diamond)$.

\begin{claim} For every $n\in\IN$ the power $x_\infty^n$ belongs to $X_\diamond$ if and only if the subgroup $nX$ is precompact.
\end{claim}

\begin{proof} If the set $nX$ is precompact, then its closure $\overline{nX}\subset X$ is compact and hence closed in $\bar X_\diamond$. Then the dense subset $\{x\in\bar X:x^n\in\overline{nX}\}\supset X_\diamond$ is closed in $\bar X$  and hence contains the point $x_\infty$. Consequently, $x_\infty^n\in X_\diamond$.

Next, we assume that the set $nX$ is not precompact. In this case we should prove that $x_\infty^n\notin X_\diamond$. To derive a contradiction, assume that $x_\infty^n\in X_\diamond$.
First we observe that the number $s$ does not divide $n$. Otherwise $nX\subset sX$ would be precompact. Consequently, for some prime number $q$ with $s_q>0$ the power $q^{s_q}$ does not divide $n$.

Since $x_\infty^n\in X_\diamond$, for every neighborhood $V\subset X$ of the unit the neighborhood $x_{\infty}^n\diamondsuit_1V\subset x_\infty^n G_0V\in\tau_\diamond$ intersects the set $\{x^n_k\}_{k\in\w}\subset G_0$, which implies that $x^n_\infty\in \bar G_0=\bar Z$. Then there exists a number $l\in \w$ such that $x_\infty^n\in z_lW_s$. Let $\e=\frac1{2^{l+s}}$ and observe that $z_l\diamondsuit_\e W_s\in \tau_\diamond$ is a neighborhood of $x^n_\infty$ in the topology $\tau_\diamond$. Since $x_\infty^n$ is an accumulation point of the sequence $(x^n_k)_{k\in\w}$, there exists a number $k>\max\{l,ns\}$ such that $x^n_k\in z_l\diamondsuit_\e W_s$ and hence $x^n_k\in z_lyW_s$ for some $y\in\diamondsuit_\e$. Write the point $y$ as $y=x_1^{\e_1}\cdots x_\lambda^{\e_\lambda}$ for some $\lambda>k$ and some integer numbers $\e_1,\dots,\e_\lambda$ such that $\sum_{i=1}^\lambda\e_i=0$ and
$\sum_{i=1}^\lambda\frac{|\e_i|}{2^i}<\e=\frac1{2^{l+s}}$. The last inequality implies that $\e_i=0$ for all $i\le l+s$ and $|\e_i|<2^{i-s}$ for all $i\le\lambda$.

It follows that $z_lW_s\ni y^{-1}x_k^n=x_1^{\delta_1}\cdots x_\lambda^{\delta_\lambda}$ where $\delta_k=n-\e_k$ and $\delta_i=-\e_i$ for all $i\in\{1,\dots,\lambda\}\setminus \{k\}$.
We claim that $|\delta_is|<2^{i+1}$ for all $i\le\lambda$. If $i\ne k$, then $|\delta_is|=|\e_i|s<2^{i-s}s<2^i$. For $i=k$ we have $$|\delta_i|s=|n-\e_k|s\le ns+|\e_k|s\le k+2^{k-s}s<2^{k+1}=2^{i+1}.$$

Since $\sum_{i=1}^\lambda\delta_i=n$ and $n$ is not divisible by $q^{s_q}$, for some $i\le\lambda$, the number $\delta_i$ is not divisible by $q^{s_q}$. Let $j\le \lambda$ be the largest number such that $\delta_j$ is not divisible by $q^{s_q}$.
Since $\delta_i=-\e_i=0$ for $i\le l$, the number $j$ is greater than $l$.

The maximality of $j$ guarantees that $x_i^{\delta_i}\in q^{s_q}G_0$ for all $i\in(j,\lambda]$ and hence
$$x_j^{\delta_j}\in x_1^{-\delta_1}\cdots x_{j-1}^{-\delta_{j-1}}z_lW_1x_{j+1}^{-\delta_{j+1}}\cdots x_\lambda^{-\delta_\lambda}\subset
x_1^{-\delta_1}\cdots x_{j-1}^{-\delta_{j-1}}z_lW_sq^{s_q}\bar Z.$$
Let $s'=s/q^{s_q}$ and consider the power
$$x_j^{\delta_js'}\in x_1^{-\delta_1s'}\cdots x_{j-1}^{-\delta_{j-1}s'}z_l^{s'}W_s^{s'}(s'p^{s_p}\bar Z)\subset F_jW_1(s\bar Z)\subset F_jW_1G_1,$$which contradicts the condition (3) of Lemma~\ref{l:seq} as
$\max_{i\le j}|\delta_i s'|\le \max_{i\le j}2^{i+1}=2^{j+1}$ and
$\delta_js'Z\not\subset G_1=\overline{sZ}$. To see the last  non-inclusion, for every prime number $p$ let $t_p$ be the largest number such that $p^{t_p}$ divides $\delta_js'$. It follows that $t_q<s_q$ and $t_p\ge s_p$ for $p\ne q$.
By Lemma~\ref{l:Tp-dense2}, the closure of the set $\delta_js'Z$ coincides with the closure of the set $\sum_{p\in\IP}p^{t_p}T_p(\bar Z)$, which is not precompact because it contains the set $q^{t_q}T_q(\bar Z)\supset q^{s_q-1}T_q(\bar Z)$ which is not precompact according to Claim~\ref{cl:tq}.
Then the set $\delta_js'Z$ is not precompact and hence cannot be contained in the precompact set $\overline{sZ}=G_1$.
\end{proof}
\end{proof}

Lemma~\ref{l:CE} implies the ``bounded'' (or rather ``quantitative'') part of Theorem~\ref{t:main}.

\begin{theorem}\label{t:bound} For a number $n\in\IN$ and a complete Abelian topological group $X$ of compact exponent the following conditions are equivalent:
\begin{enumerate}
\item the subgroup $nX$ is totally bounded;
\item for any closed subsemigroup $Z\subset X$, any continuous homomorphism $f:Z\to Y$ to a powertopological semigroup $Y$ and any $y\in \overline{f(Z)}\subset Y$ we have $y^n\in f(X)$;
\item for any injective continuous homomorphism $f:X\to Y$ to a topological group $Y$ and any $y\in \overline{f(X)}$ we have $y^n\in f(X)$.
\end{enumerate}
\end{theorem}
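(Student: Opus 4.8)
The plan is to establish the cycle $(1)\Rightarrow(2)\Rightarrow(3)\Rightarrow(1)$, where $(3)\Rightarrow(1)$ is the substantive direction carrying the weight of Lemma~\ref{l:CE}, and $(2)\Rightarrow(3)$ is a pure specialization. Indeed, for $(2)\Rightarrow(3)$, given an injective continuous homomorphism $f\colon X\to Y$ into a topological group $Y$, note that $Y$ is in particular a powertopological semigroup and that $X$ is a closed subsemigroup of itself; applying $(2)$ with $Z=X$ immediately gives $y^n\in f(X)$ for every $y\in\overline{f(X)}$, which is exactly $(3)$.

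For $(3)\Rightarrow(1)$ I would argue by contraposition, and this is where Lemma~\ref{l:CE} does the work. Assume $nX$ is not totally bounded. Since $X$ is a complete Abelian group of compact exponent, Lemma~\ref{l:CE} produces a $\diamondsuit$-Hausdorff sequence $(x_m)_{m\in\w}$ in $X$ whose accumulation point $x_\infty$ in the completion $\bar X_\diamond$ of $X_\diamond=(X,\tau_\diamond)$ satisfies $x_\infty^k\in X_\diamond$ if and only if $kX$ is precompact. As $nX$ is not precompact, this yields $x_\infty^n\notin X_\diamond$. I then take $Y:=\bar X_\diamond$ and let $f\colon X\to Y$ be the natural inclusion factored through $X_\diamond$. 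Because each basic $\tau_\diamond$-neighbourhood $T\cdot(\diamondsuit_\delta\cap B)$ of $e$ contains the $\Tau$-open set $T$ (as $e\in\diamondsuit_\delta\cap B$), the topology $\tau_\diamond$ is coarser than $\Tau$, so $f$ is a continuous homomorphism; it is injective since $\tau_\diamond$ is Hausdorff. Moreover $f(X)=X_\diamond$ is dense in $\bar X_\diamond$, so $\overline{f(X)}=\bar X_\diamond\ni x_\infty$, while $y:=x_\infty$ has $y^n=x_\infty^n\notin X_\diamond=f(X)$. This contradicts $(3)$, forcing $(1)$.

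The implication $(1)\Rightarrow(2)$ is where I expect the real difficulty. Assume $nX$ is totally bounded, so $\overline{nX}$ is compact by completeness of $X$, and let $f\colon Z\to Y$ be a continuous homomorphism from a closed subsemigroup $Z\subset X$ into a powertopological semigroup, with $y\in\overline{f(Z)}$. Since $Z$ is a subsemigroup, $nZ\subset Z$, and $K:=\overline{nZ}$ is compact (closed in $X$ and contained in $\overline{nX}$) and lies in $Z$; hence $f(K)$ is compact, so closed in the Hausdorff space $Y$. Choosing a net $(z_\alpha)$ in $Z$ with $f(z_\alpha)\to y$, the powers $z_\alpha^n$ lie in $nZ\subset K$, so after passing to a subnet I may assume $z_\alpha^n\to w\in K\subset Z$, whence $f(z_\alpha)^n=f(z_\alpha^n)\to f(w)\in f(Z)$ by continuity of $f$. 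On the other hand the powertopological axiom makes $t\mapsto a\,t^n\,b$ continuous for every $a,b\in Y$, so $a\,f(z_\alpha)^n\,b\to a\,y^n\,b$; comparing with $a\,f(z_\alpha)^n\,b\to a\,f(w)\,b$ and using Hausdorffness gives $a\,y^n\,b=a\,f(w)\,b$ for all $a,b\in Y$.

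The hard part, and the step I expect to be the main obstacle, is to upgrade this family of two-sided identities to the genuine equality $y^n=f(w)\in f(Z)$. One cannot simply cancel, because two-sided translations need not separate points in a semigroup without a unit. The resolution I would pursue exploits the extra structure at hand: the subsemigroup $\overline{f(Z)}$ is Abelian (commutativity is inherited from $X$ through $f$ and preserved under closure by separate continuity), and precompactness of $nX$ propagates to precompactness of $(kn)X$ for every $k\in\IN$ (since squaring and higher powers are continuous on the group $X$ and carry $\overline{nX}$ to compact sets), which lets one keep all the powers $f(z_\alpha)^{kn}$ inside compact subsets of $f(Z)$ simultaneously. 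Iterating the powertopological identity across these controlled exponents should pin down the limit, showing that $f(z_\alpha)^n$ in fact converges to $y^n$ within $f(K)$, so that $y^n\in f(K)\subset f(Z)$. Securing this convergence, rather than settling for equality only after two-sided multiplication, is the technical heart of the argument.
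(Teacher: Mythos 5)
Your treatment of $(2)\Rightarrow(3)$ and of $(3)\Rightarrow(1)$ is correct and is essentially the paper's argument: the paper also dismisses $(2)\Rightarrow(3)$ as a specialization and derives $(3)\Rightarrow(1)$ by contraposition from Lemma~\ref{l:CE}, exactly as you do (taking $Y=\bar X_\diamond$ and $f$ the identity map, which is continuous because every basic $\tau_\diamond$-neighborhood $T\cdot(\diamondsuit_\delta\cap B)$ contains the $\Tau$-open set $T$).

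The genuine gap is in $(1)\Rightarrow(2)$, precisely at the step you yourself flag as unresolved. After passing to a subnet with $z_\alpha^n\to w$ you only obtain $a\,y^n\,b=a\,f(w)\,b$ for all $a,b\in Y$, and your proposed repair --- ``iterating the powertopological identity across these controlled exponents should pin down the limit'' --- is a hope, not an argument: raising to higher powers does not remove the obstruction that two-sided translations need not separate points in a noncancellative semigroup, and no mechanism is given that would force $f(z_\alpha)^n\to y^n$. The paper avoids this entirely by using the continuity of the bare power map $\pi\colon Y\to Y$, $\pi(y)=y^n$, which it reads off from the powertopological structure: the set $F=\pi^{-1}\bigl(f(Z\cap\overline{nX})\bigr)$ is then closed (as $f(Z\cap\overline{nX})$ is compact, hence closed, in the Hausdorff space $Y$), contains $f(Z)$ because $z^n\in Z\cap\overline{nX}$ for every $z\in Z$, and therefore contains $\overline{f(Z)}$; so every $y\in\overline{f(Z)}$ satisfies $y^n\in f(Z\cap\overline{nX})\subset f(Z)$. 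No nets, no subnet extraction, and no cancellation are needed. If your worry was that the definition of a powertopological semigroup only literally asserts continuity of $x\mapsto ax^nb$ for $a,b\in S$ (so that $x\mapsto x^n$ is not covered when $S$ has no unit), note that the paper's own proof explicitly invokes the continuity of $y\mapsto y^n$, so that is the intended reading; under your stricter reading your argument does not close, and under the paper's reading the entire difficulty you construct is unnecessary.
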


\begin{proof} To prove that $(1)\Ra(2)$, assume that the subgroup $nX$ is totally bounded and hence has compact closure $\overline{nX}$ in the complete topological group $X$. Fix a closed subsemigroup $Z\subset X$ and a  continuous homomorphism $f:Z\to Y$ to a powertopological semigroup $Y$.
Being compact, the set $Z\cap\overline{nX}$ has compact (and hence closed) image $f(Z\cap \overline{nX})$ in the powertopological semigroup $Y$. The continuity of the power map $\pi:Y\to Y$, $\pi:y\mapsto y^n$, implies that the set $F=\{y\in Y:y^n\in f(Z\cap\overline{nX})\}$ is closed in $Y$. Since $f(X)\subset F$, the closed set $F$ contains the closure $\overline{f(X)}$ of $f(X)$ in $Y$. Then any point $y\in\overline{f(X)}$ belongs to $F$ and hence has $y^n\in f(Z\cap \overline{nX})\subset Y$.
\smallskip

The implication $(2)\Ra(3)$ is trivial and $(3)\Ra(1)$ follows from Lemma~\ref{l:CE}.
\end{proof}

\begin{remark} Theorem~\ref{t:bound}(2) cannot be further extended to semitopological semigroups: just take any infinite discrete topological group $X$ of compact exponent and consider its one-point compactification $Y=X\cup\{\infty\}$. Extend the group operation of $X$ to a semigroup operation on $Y$ letting $y\infty=\infty=\infty y$ for all $y\in Y$. It is easy to check that $Y$ is a compact Hausdorff semitopological semigroup containing $X$ as a non-closed discrete subgroup.
\end{remark}

\section{Topological groups of hypocompact exponent}

We shall say that a topological group $X$ has {\em hypocompact exponent} if for any neighborhood $U\subset X$ of the unit there exist $n\in\IN$ and a finite subset $F\subset X$ such that $nX\subset FU$. It is clear that each topological group of precompact exponent has hypocompact exponent.

\begin{lemma}\label{l:fg} If a topological group $X$ has hypocompact exponent, then each finitely generated Abelian subgroup $A$ of $X$ is precompact.
\end{lemma}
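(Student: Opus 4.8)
The plan is to reduce the statement to the case of cyclic subgroups and then exploit the product structure of finitely generated Abelian groups. Write $A=\langle a_1,\dots,a_k\rangle$. Since $A$ is Abelian, every element of $A$ has the form $a_1^{m_1}\cdots a_k^{m_k}$, so $A=\langle a_1\rangle\cdots\langle a_k\rangle$ is the setwise product of the cyclic subgroups $\langle a_i\rangle=\{a_i^m:m\in\IZ\}$. In an Abelian topological group a product of finitely many totally bounded subsets is again totally bounded: given a neighborhood $U$ of the unit, choose a symmetric neighborhood $V$ with $V^k\subset U$, cover each factor as $\langle a_i\rangle\subset F_iV$ with $F_i$ finite, and use commutativity to rearrange $F_1V\cdots F_kV=(F_1\cdots F_k)V^k\subset F'U$. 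Hence it suffices to prove that each single cyclic subgroup $\langle a\rangle$ is totally bounded (= precompact).

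So I would fix $a\in X$ and an arbitrary neighborhood $U\subset X$ of the unit. Using that $X$ has hypocompact exponent, I choose $n\in\IN$ and a finite set $F\subset X$ with $nX\subset FU$. The key observation is that the $n$-th powers inside $\langle a\rangle$ already fall into $nX$: indeed $\{a^{nm}:m\in\IZ\}=n\langle a\rangle\subset nX\subset FU$.

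Next I would decompose $\langle a\rangle$ into the $n$ cosets of the subgroup $n\langle a\rangle$, namely $\langle a\rangle=\bigcup_{j=0}^{n-1}a^j\,(n\langle a\rangle)\subset\bigcup_{j=0}^{n-1}a^jFU=F'U$, where $F'=\{a^jf:0\le j<n,\ f\in F\}$ is finite. Because $X$ is Abelian we also have $FU=UF$ and $F'U=UF'$, so $\langle a\rangle$ is $U$-bounded. As $U$ was an arbitrary neighborhood of the unit, $\langle a\rangle$ is totally bounded, which completes the reduction and hence the proof.

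The argument is short once organized this way, and the only point demanding a little care is the bookkeeping in the coset decomposition: the exponent $n$ and the finite set $F$ furnished by hypocompactness depend on the chosen neighborhood, but since one neighborhood is fixed throughout the estimate for $\langle a\rangle$, the same $n$ and $F$ serve all $n$ cosets, and the shift factors $a^0,\dots,a^{n-1}$ merely enlarge $F$ to the still-finite covering set $F'$.
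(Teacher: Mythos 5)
Your proof is correct and rests on the same key idea as the paper's: hypocompactness of the exponent gives $nX\subset FU$, and the finiteness of the quotient $A/nA$ (in your version, the coset decomposition of $\langle a\rangle$ over $n\langle a\rangle$) then yields the finite covering. The paper avoids your preliminary reduction to cyclic subgroups by observing directly that $A/nA$ is finite for any finitely generated Abelian group $A$, which makes the argument a one-step covering $A=F_2\cdot nA\subset F_2F_1U$, but the substance is identical.
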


\begin{proof} Given any neighborhood $U\subset X$ of the unit, we need to find a finite subset $F\subset X$ such that $A\subset FU$. Since $X$ has hypocompact exponent, there exist $n\in \IN$ and $F_1\subset X$ such that $nX\subset F_1U$. Since the group $A$ is Abelian and finitely generated, the quotient group $A/nA$ is finite. So, we can find a finite subset $F_2\subset A$ such that $A=F_2\cdot nA$. Then $A=F_2\cdot nA\subset F_2F_1U$ and hence the finite set $F=F_1F_2$ has the required property.
\end{proof}

\begin{lemma}\label{l:converge} If a complete Abelian topological group $X$ has hypocompact exponent, then the subgroup $\overline{\w X}:=\bigcap_{n\in\IN}\overline{nX}$ is compact and for every $U\subset X$ there exists $n\in\IN$ such that $\overline{nX}\subset \overline{\w X}\cdot U$.
\end{lemma}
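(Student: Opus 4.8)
The plan is to recast the hypocompact exponent hypothesis as a statement about closures, and then run a total-boundedness argument for the first assertion and a Cauchy-filter argument for the second. For the reformulation, fix a neighborhood $U\subset X$ of the unit and choose an open neighborhood $V$ with $\overline V\subset U$; applying the definition of hypocompact exponent to $V$ yields $n\in\IN$ and a finite $F\subset X$ with $nX\subset FV$, whence $\overline{nX}\subset\overline{FV}=F\overline V\subset FU$ since $F$ is finite. Thus we obtain the key property $(\ast)$: \emph{for every neighborhood $U$ of the unit there are $n\in\IN$ and a finite $F\subset X$ with $\overline{nX}\subset FU$.} Because $\overline{\w X}\subset\overline{nX}$ for every $n$, property $(\ast)$ shows immediately that $\overline{\w X}$ is $U$-bounded for every $U$, i.e. totally bounded; being closed in the complete group $X$ it is complete, and a complete totally bounded subgroup has compact closure, so $\overline{\w X}$ is compact. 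This settles the first assertion.

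For the second assertion write $D:=\overline{\w X}$ and note that for $n\mid m$ one has $mX\subset nX$, so the family $\{\overline{nX}\}_{n\in\IN}$ is directed downward by reverse inclusion along the divisibility order on $\IN$, with $\bigcap_n\overline{nX}=D$. Fix an open neighborhood $U$ of the unit and suppose, for a contradiction, that $\overline{nX}\not\subset DU$ for every $n\in\IN$. Pick $y_n\in\overline{nX}\setminus DU$ and view $(y_n)_n$ as a net indexed by $(\IN,\mid)$. The goal is to produce an accumulation point $y^*$ of this net: for each fixed $n_0$ the net is eventually in the closed subgroup $\overline{n_0X}$ (indeed $y_n\in\overline{nX}\subset\overline{n_0X}$ whenever $n_0\mid n$), so any accumulation point lies in every $\overline{n_0X}$ and hence in $D\subset DU$; since $DU$ is open this forces $y_n\in DU$ for cofinally many $n$, the desired contradiction.

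The main obstacle is that the individual subgroups $\overline{nX}$ need \emph{not} be compact or even totally bounded (already for an infinite discrete group $X$ of bounded exponent the subgroup $\overline{1\cdot X}=X$ is non-compact), so the net $(y_n)$ need not lie in a compact set and one cannot simply invoke the finite intersection property for the directed family. The way around this is that, although no single $\overline{nX}$ need be totally bounded, the \emph{tails} of the net are: given a neighborhood $W$ of the unit, property $(\ast)$ supplies $m\in\IN$ and a finite $F$ with $\overline{mX}\subset FW$, and then for every multiple $n$ of $m$ one has $y_n\in\overline{nX}\subset\overline{mX}\subset FW$, so the tail $\{y_n:m\mid n\}$ is $W$-bounded. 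Consequently the image on $X$ of the filter of tails of the net is totally bounded; refining it to an ultrafilter and using that $X$ is Abelian, so that a single uniformity is in play, this ultrafilter contains a $W$-small set for every $W$, i.e. it is a Cauchy filter, which by completeness of $X$ converges to a point $y^*$ that is then the sought accumulation point of the net. Thus the heart of the argument is exactly this passage from the indexwise total boundedness furnished by $(\ast)$ to a genuine cluster point via completeness; the remaining neighborhood bookkeeping (choosing symmetric $V$ with $\overline V\subset W$, noting that a translate $aW$ is $WW$-small, etc.) is routine.
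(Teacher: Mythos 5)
Your argument is correct and follows essentially the same route as the paper: total boundedness of $\overline{\w X}$ from the hypocompactness hypothesis plus completeness for the first assertion, and for the second a contradiction obtained by choosing points of $\overline{nX}\setminus\overline{\w X}U$ in deeper and deeper subgroups, observing that the tails of this family are totally bounded, and extracting via completeness a cluster point that must lie in $\overline{\w X}$ yet outside the open set $\overline{\w X}U$. The only cosmetic difference is that the paper indexes its sequence by $n!$ to linearize the divisibility order and invokes precompactness of the whole set $\{x_n\}$ directly, whereas you keep the divisibility-ordered net and pass through a Cauchy ultrafilter; the substance is identical.
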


\begin{proof} To see that the closed set $\overline{\w X}$ is compact, it suffices to check that it is totally bounded in the complete topological group $X$. Given any open neighborhood $U\subset X$ of the unit, use the hypocompactness of the exponent of $X$ and find $m\in\IN$ and a finite set $F\subset X$ such that $mX\subset FU$ and hence $\overline{\w X}\subset \overline{m X}\subset F\bar U$, which means that $\overline{\w X}$ is precompact and hence compact.
\smallskip

We claim that $nX\subset \overline{\w X}\cdot U$ for some $n\in\IN$. In the opposite case, for every $n\in\IN$ we could choose a point $x_n\in n! X\setminus \overline{\w X}U$. Taking into account that the topological group $X$ has hypocompact exponent, we see that the set $\{x_n\}_{n\in\w}$ is precompact in the complete topological group $X$ and hence the sequence $(x_n)_{n\in\w}$ has an accumulation point $x_\infty\in X\setminus \overline{\w X}U$. On the other hand, for every $m\ge n$ we have $x_m\in m!X\subset n!X$, which implies that $x_\infty\in\bigcap_{n\in\IN}\overline{n X}$ and this is a desired contradiction.
\end{proof}


We shall say that a subset $B$ of a topological group $X$ is {\em precompact modulo} a set $A\subset X$ if for any open neighborhood $U\subset X$ of the unit, the set $B$ is $AU$-bounded.

\begin{lemma}\label{l:modulo} Let $A,B$ be two subsets of an Abelian topological group. If $A$ is precompact modulo $B$, then for any $n\in\IN$ the set $nA=\{a^n:a\in A\}$ is precompact modulo $nB$.
\end{lemma}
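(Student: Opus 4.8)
The plan is to reduce everything to the single algebraic identity available in an Abelian group, namely that the $n$-th power of a product splits, and to use continuity of the power map to transfer neighborhood control. First I would unwind the two definitions. In an Abelian group the condition that a set $C$ be $U$-bounded simplifies to the existence of a finite set $F$ with $C\subset FU$, since $FU=UF$; consequently the hypothesis that $A$ is precompact modulo $B$ says precisely that for every open neighborhood $U\subset X$ of the unit there is a finite set $F$ with $A\subset FBU$, and the desired conclusion that $nA$ is precompact modulo $nB$ asks, for every open neighborhood $V\subset X$ of the unit, for a finite set $F'$ with $nA\subset F'(nB)V$.

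Given such a $V$, I would first produce a neighborhood whose $n$-th powers land inside $V$. Since the map $p:X\to X$, $p:x\mapsto x^n$, is continuous with $p(e)=e$, the set $U:=p^{-1}(V)$ is an open neighborhood of the unit, and by construction $nU=p(U)\subset V$. Applying the hypothesis to this particular $U$ yields a finite set $F\subset X$ with $A\subset FBU$. Then for an arbitrary $a\in A$, written as $a=fbu$ with $f\in F$, $b\in B$, $u\in U$, commutativity gives $a^n=f^nb^nu^n\in (nF)(nB)(nU)\subset (nF)(nB)V$. Hence $nA\subset (nF)(nB)V$, and since $nF=\{f^n:f\in F\}$ is the image of the finite set $F$ under $p$, it is finite. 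Thus $F':=nF$ is the required witness, and as $V$ was arbitrary, $nA$ is precompact modulo $nB$.

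The one genuinely essential ingredient is the factorization $a^n=f^nb^nu^n$, which is exactly where the Abelian hypothesis is consumed: in a non-commutative group the $n$-th power of $fbu$ would not separate into powers of $f$, $b$, and $u$, and the whole strategy would collapse. Apart from that, the argument is a routine tracking of the definitions together with continuity of $p$ at the unit, so I do not anticipate any serious obstacle.
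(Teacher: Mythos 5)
Your proof is correct and follows essentially the same route as the paper: both choose a neighborhood $U$ of the unit with $nU\subset V$ via continuity of the power map, apply the hypothesis to get $A\subset FBU$, and use the Abelian factorization $(fbu)^n=f^nb^nu^n$ to conclude $nA\subset (nF)(nB)V$. The only differences are notational (your $U$ and $V$ play the roles of the paper's $V$ and $U$), so there is nothing to add.
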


\begin{proof} Given any neighborhood $U\subset X$ of the unit, find a neighborhood $V\subset X$ of the unit such that $nV\subset U$. Since $A$ is precompact modulo $B$, there exists a finite subset $F\subset X$ such that $A\subset FBV$. It follows that for every $a\in A$, there are points $f\in F$, $b\in B$ and $v\in V$ such that $a=fbv$. Then $a^n=f^nb^nv^n$ and hence $nA\subset (nF)(nB)(nV)\subset (nF)(nB)U$, which means that $nA$ is precompact modulo $nB$.
\end{proof}

 A topological group $X$ is {\em $\w$-narrow} if for any neighborhood $U\subset X$ of the unit there exists a countable subset $C\subset X$ such that $X=CU$. It is clear that each separable topological group is $\w$-narrow.

\begin{lemma}\label{l:p-local} Assume that an $\w$-narrow complete Abelian topological group $X$ has hypocompact exponent and for some prime number $p$ the $p$-component $T_p(X)$ of $X$ does not have compact exponent. Then $X$ contains a $\diamondsuit$-Hausdorff sequence $\{x_m\}_{m\in\w}\subset T_p(X)$ which has an accumulation point $x_\infty$ in $\bar X_\diamond$ such that $x^n_\infty\notin X_\diamond$ for all $n\in\IN$.
\end{lemma}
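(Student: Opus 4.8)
The plan is to reduce everything to the closed $p$-singular subgroup $Y:=T_p(X)$ and then to imitate the construction of Lemma~\ref{l:CE}, replacing its single ``compact level'' $\overline{sZ}$ by a chain of subgroups descending to a compact core. First I would record the relevant properties of $Y$: being closed it is complete, being a subgroup of an $\w$-narrow group it is $\w$-narrow, it is $p$-singular, and a routine argument (replacing translating elements by elements of $Y$) shows it inherits hypocompact exponent from $X$. By hypothesis $Y$ has no compact exponent, so $p^aY$ is not precompact for every $a\in\w$; here $p$-singularity is what lets me ignore factors coprime to $p$, since those act densely. Applying Lemma~\ref{l:converge} to $Y$ I get that $G_\w:=\bigcap_{c\in\w}\overline{p^cY}$ is compact and that $\overline{p^cY}\subset G_\w U$ for all large $c$ (depending on $U$). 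Since $G_\w$ is compact while $p^aY$ is not precompact, $p^aY$ is \emph{not precompact modulo} $G_\w$ for every $a$; and because $\overline{p^cY}\downarrow G_\w$, this upgrades to: $p^aY$ is not precompact modulo $\overline{p^cY}$ once $c$ is large enough (depending on $a$).

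Next I would fix the data feeding the two Key Lemmas. I'd choose a fast–growing sequence $(c_k)_{k\in\IN}$, a decreasing sequence of symmetric neighbourhoods $(W_k)$ with $W_{k+1}^2\subset W_k$, set $G_k:=\overline{p^{c_k}Y}$ for $k\ge1$, and — applying Lemma~\ref{l:separ} inside the quotients $X/G_k$ and adjoining $p^a$-th roots of the witnesses — build a single countable subgroup $Z\subset Y$ such that for every $k$ and every $a<c_k$ the set $p^aZ$ is not $W_k^2G_k$-bounded. Putting $G_0:=\bar Z$, the point is that for a pair $(n,k)$ with $k\le n$ and $v_p(n)=a$ either $c_k\le a$, whence $nG_0\subset\overline{p^aY}\subset G_k$, or $c_k>a$, whence $nG_0=p^a(mZ)$ inherits the non-$W_k^2G_k$-boundedness of $p^aZ$; this is precisely condition~(2) of Lemmas~\ref{l:seq} and~\ref{l:Hausdorff}, while conditions~(1),(3),(4) hold by the choice of $(W_k)$ and by the compactness of $G_\w$ together with its convergence property. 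Then Lemma~\ref{l:seq} produces a sequence $(x_m)\subset G_0\subset Y$ satisfying its condition~(3), and Lemma~\ref{l:Hausdorff} certifies that $(x_m)$ is $\diamondsuit$-Hausdorff and that $\{x_m\}$ is totally bounded in $X_\diamond$, hence has an accumulation point $x_\infty\in\bar X_\diamond$.

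Finally I would prove $x_\infty^n\notin X_\diamond$ for every $n$ by contradiction, copying the computation at the end of Lemma~\ref{l:CE}. Assuming $x_\infty^n\in X_\diamond$, one first gets $x_\infty^n\in\bar Z$, so for a suitable $k_0$ with $c_{k_0}>v_p(n)=a$ a basic neighbourhood $z_l\diamondsuit_\e W_{k_0}$ of $x_\infty^n$ captures $x_k^n$ for some large $k$; writing $y^{-1}x_k^n=\prod_i x_i^{\delta_i}$ with $\sum_i\delta_i=n$ and controlled exponents, I would use $v_p(n)=a<c_{k_0}$ to conclude that not all $\delta_i$ are divisible by $p^{c_{k_0}}$, and let $j$ be the largest index with $p^{c_{k_0}}\nmid\delta_j$. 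All higher factors lie in $p^{c_{k_0}}Y\subset G_{k_0}$, so after the Lemma~\ref{l:CE}-style clearing of the coprime part of $\delta_j$ I'd reach $x_j^{\delta_j s'}\in F_jW_1G_{k_0}$ with $\delta_j s'G_0\not\subset G_{k_0}$ — the latter because $p^{v_p(\delta_j)}Z$ is not even $W_{k_0}^2G_{k_0}$-bounded by the construction of $Z$ — contradicting condition~(3) of Lemma~\ref{l:seq}.

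The main obstacle, and where $\w$-narrowness is essential, is the middle step: a \emph{single} countable $Z$ together with a \emph{single} pair of sequences $(c_k),(W_k)$ must simultaneously realize the non-precompactness of $p^aY$ modulo $\overline{p^{c_k}Y}$ at every matched level, even though the witnessing neighbourhoods a priori shrink as $a$ grows. The $\w$-narrowness of $Y$ is what guarantees that these countably many non-precompactness requirements can all be captured by one countable subgroup (produced level by level via Lemma~\ref{l:separ} in the quotients), and coordinating $c_k\nearrow\infty$, $W_k\searrow$, and $Z$ so that conditions~(2) of the Key Lemmas hold is the delicate part; once this is in place, the remainder is a long but routine adaptation of Lemma~\ref{l:CE}.
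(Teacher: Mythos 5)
Your overall architecture is right --- a descending chain $G_k=\overline{p^{c_k}T_p(X)}$ with compact intersection (via Lemma~\ref{l:converge} and $p$-singularity), fed into the two Key Lemmas, followed by the standard accumulation-point contradiction --- but the middle of your argument has a genuine gap coming from your choice $G_0=\bar Z$ for a \emph{countable} subgroup $Z\subset T_p(X)$. Condition (2) of Lemmas~\ref{l:seq} and \ref{l:Hausdorff} requires $nG_0=n\bar Z$ to be non-$W_k^2G_k$-bounded for \emph{every} $n$ with $nG_0\not\subset G_k$, and your verification reduces $nZ=p^a(mZ)$ (where $a=v_p(n)$, $m=n/p^a$) to $p^aZ$. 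Since $mZ$ is merely a subgroup of $Z$, that reduction needs $mZ$ to be dense (or at least of finite index) in $Z$, i.e.\ it needs $\bar Z$ to be $p$-singular --- and subgroups of a $p$-singular group need not be $p$-singular (for instance $\IZ\subset\IR=T_p(\IR)$ is not), nor does building $Z$ by adjoining $p^a$-th roots of witnesses arrange this. The same problem recurs in your final step, where you need $\delta_jG_0\not\subset G_{k_0}$ but only control $p^{v_p(\delta_j)}Z$; and the proposed fix --- the ``Lemma~\ref{l:CE}-style clearing'' $\delta_j\mapsto\delta_js'$ --- does not transfer: in Lemma~\ref{l:CE} the multiplier $s'=s/q^{s_q}$ is a constant fixed \emph{before} the construction, which is exactly why $\e$ can be shrunk there by the factor $2^{s}$ in advance so that $|\delta_is'|\le 2^{i+1}$ stays within the exponent bound of condition (3); here the cofactor of $\delta_j$ coprime to $p$ is not bounded in advance (it can be of order $2^{j+1}$), so $\delta_js'$ leaves the range where condition (3) applies.

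The paper avoids all of this by taking $G_0=T_p(X)$ itself: the Key Lemmas never require $G_0$ to be countable (the sequence $(x_m)$ is produced by an invariant mean on the whole abelian group $G_0$), and since $T_p(X)$ is $p$-singular by definition one has $\overline{nG_0}=\overline{p^{v_p(n)}T_p(X)}$ for free, so neither clearing nor root-adjoining is needed. The countable set supplied by $\w$-narrowness serves only as the list of translating elements $z_k$ with $X=ZW_k$, so that in the final contradiction one can find $l$ with $x_\infty^n\in z_lW_{d+1}$; it is not a generating set for $G_0$. Correspondingly, the key non-boundedness input is the claim that $p^kT_p(X)$ is not precompact modulo $p^{k+1}T_p(X)$ for every $k$, proved from Lemma~\ref{l:modulo} and hypocompactness rather than from your ``$\overline{p^cY}\downarrow G_\w$'' upgrade. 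If you replace $G_0=\bar Z$ by $T_p(X)$, the rest of your outline goes through essentially as in the paper.
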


\begin{proof} First we prove the following statement.

\begin{claim}\label{cl:modulo-p} For every $k\in\w$ the set $p^k T_p(X)$ is not precompact modulo $p^{k+1}T_p(X)$.
\end{claim}

\begin{proof} To derive a contradiction, assume that for some $k\in\w$ the set $p^kT_p(X)$ is precompact modulo $p^{k+1}T_p(X)$. Lemma~\ref{l:modulo} implies that for every $n\ge k$ the set $p^nT_p(X)$ is precompact modulo $p^{n+1}T_p(X)$ and hence $p^kT_p(X)$ is precompact modulo $p^nT_p(X)$. We claim that $p^kT_p(X)$ is precompact. Indeed, for every neighborhood $U$ of the unit, by the hypocompactness of the exponent of $X$, there exist $n\in\IN$ and finite subset $F\subset X$ such that $nT_p(X)\subset FU$. Let $d\in\w$ be the largest number such that $p^d$ divides $n$. Since the subgroup $T_p(X)$ is $p$-singular, the subgroup $nT_p(X)$ is dense in $p^dT_p(X)$. Consequently, $$p^dT_p(X)\subset\overline{p^dT_p(X)}=\overline{nT_p(X)}\subset \overline{nX}\subset F\bar U.$$ Since $p^kT_p(X)$ is precompact modulo $p^nT_p(X)$, there exists a finite subset $E\subset X$ such that $p^kT_p(X)\subset (p^nT_p(X))EU\subset FUEU=(FE)U^2$, which means that $p^kT_p(X)$ is precompact and hence the closed subgroup $T_p(X)$ of the complete group $X$ has compact exponent. But this contradicts the assumption of Lemma~\ref{l:p-local}.
\end{proof}

Using Claim~\ref{cl:modulo-p}, choose a decreasing sequence $(W_k)_{k\in\w}$ of neighborhoods of the unit in $X$ such that for every $k\in\w$ the following conditions are satisfied:
\begin{enumerate}
\item $W_k=W_k^{-1}$ and $W_{k+1}^2\subset W_k$;
\item $p^kT_p(X)$ is not $W_{k}^3\cdot p^{k+1}X$-bounded.
\end{enumerate}

The topological group $X$, being $\w$-narrow, contains a countable subset $Z=\{z_k\}_{k\in\w}$ such that $X=ZW_k$ for all $k\in\w$.
For every $k\in\w$ let $G_k=\overline{p^kT_p(X)}$. Since the subgroup $T_p(X)$ is $p$-singular, for every $n\in\IN$ we have $\overline{nT_p(X)}=\overline{p^dT_p(X)}$ where $d\in\w$ is the largest number such that $p^d$ divides $n$. This observation, the choice of the sequence $(W_k)_{k\in\w}$ and Lemma~\ref{l:converge} guarantee that the sequences $(W_k)_{k\in\w}$ and $(G_k)_{k\in\w}$ satisfy the conditions (1),(2) of Lemma~\ref{l:seq} and (1)--(4) of  Lemma~\ref{l:Hausdorff}.

Applying Lemmas~\ref{l:seq}, construct a sequence $(x_n)_{n\in\w}$ satisfying the conditions (3),(4) of this lemma. Applying Lemma~\ref{l:Hausdorff}, we conclude that this sequence is $\diamondsuit$-Hausdorff and the set $\{x_m\}_{m\in\w}$ is totally bounded in the topological group $X_\diamond=(X,\tau_\diamond)$. Choose any accumulation point $x_\infty\in \bar X_\diamond$ of the sequence $(x_n)_{n\in\w}$ (which exists since the set $\{x_m\}_{m\in\w}$ is totally bounded in the complete topological group $\bar X_\diamond$).

\begin{claim} For every $n\in\IN$ the power $x_\infty^n\in\bar X_\diamond$ does not belong to $X_\diamond$.
\end{claim}

\begin{proof} To derive a contradiction, assume that $x^n_\infty\in X_\diamond$. Let $d\in\w$ be the largest number such that $p^d$ divides $n$. The $p$-singularity of the subgroup $T_p(X)$ and Claim~\ref{cl:modulo-p} guarantee that $$\overline{nG_0}=\overline{n T_p(X)}=\overline{p^d T_p(X)}\not\subset \overline{p^{d+1}T_p(X)}=G_{d+1}.$$

Since $X=ZW_{d+1}$, there exists a number $l\in\w$ such that $x^n_\infty \in z_lW_{d+1}$. Let $\e=\frac1{2^l}$.
Since the sequence $(x^n_m)_{m\in\w}$ accumulates at $x_\infty^n$ in $X_\diamond$, the neighborhood $z_l\diamondsuit_\e W_{d+1}\in\tau_\diamond$ of $x_\infty^n$ contains a point $x^n_\lambda$ with $\lambda>\max\{l,n\}$. Consequently, $x^n_\lambda\in z_lyW_{d+1}$ for some $y\in \diamondsuit_\e$.
Write $y$ as $y=x_1^{\e_1}\cdots x_m^{\e_m}$ for some $m\ge \lambda$ and some integer numbers $\e_1,\dots,\e_m$ such that $\sum_{i=1}^m\e_i=0$ and $\sum_{i=1}^m\frac{|\e_i|}{2^i}<\e=\frac1{2^l}$. The last inequality implies that $\e_i=0$ for all $i\le l$ and $|\e_i|<2^i$ for all $i\le m$.

It follows that $x^n_\lambda y^{-1}\in z_lW_{d+1}$ and $x^n_\lambda y^{-1}=x_1^{\delta_1}\cdots x_m^{\delta_n}$ where $\delta_\lambda=n-\e_\lambda$ and $\delta_i=-\e_i$ for all $i\in\{1,\dots,m\}\setminus\{\lambda\}$. It follows that $|\delta_\lambda|=|n-\e_\lambda|\le n+2^\lambda<2^{\lambda+1}$ and hence $|\delta_i|<2^{i+1}$ for all $i\le m$.

Taking into account that $\sum_{i=1}^m\delta_i=n$ is not divisible by $p^{d+1}$, we conclude that for some $j\le m$ the number $\delta_j$ is not divisible by $p^{d+1}$. We can assume that $j$ is the largest possible number with this property, i.e. $x_i^{\delta_i}\in p^{d+1}T_p(X)\subset G_{d+1}$ for all $i\in(j,m]$. It follows that $j>\max\{l\}$ (as $\delta_i=0$ for $i\le l$).
Let $c\in\w$ be the largest number such that $p^c$ divides $\delta_j$. Taking into account that $\delta_j$ is not divided by $p^{d+1}$, we conclude that $c\le d$ and $G_d\subset G_c$. By the $p$-singularity of $T_p(X)$, $G_{|\delta_j|}=\overline{\delta_j T_p(X)}=\overline{p^c T_p(X)}=G_c\not\subset G_{c+1}$.

 Then $z_lW_{d+1}\ni x^n_\lambda y^{-1}=x_1^{\delta_1}\cdots x_m^{\delta_m}\in x_1^{\delta_1}\cdots x_{j}^{\delta_{j}}G_{d+1}$ and hence $$x_j^{\delta_j}\in x_1^{-\delta_1}\cdots x_{j-1}^{-\delta_{j-1}}z_lW_{d+1}G_{d+1}\subset F_jW_{c+1}G_{c+1},$$
which contradicts the condition (3) of Lemma~\ref{l:seq}.
\end{proof}
\end{proof}

Next we treat the case of topological groups with hypocompact exponent whose $p$-components have compact exponent.

\begin{lemma}\label{l:P} Assume that an $\w$-narrow complete Abelian topological group $X$ has hypocompact exponent and for every prime number $p$ the $p$-component $T_p(X)$ has precompact exponent. If $X$ fails to have compact exponent, then $X$ contains a $\diamondsuit$-Hausdorff sequence $(x_n)_{n\in\w}$ which has an accumulation point $x_\infty\in\bar X_\diamond$ such that  $x_\infty^n\notin X_\diamond$ for all $n\in\IN$.
\end{lemma}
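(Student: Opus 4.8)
The plan is to follow the template of Lemma~\ref{l:p-local}, replacing the single-prime filtration $G_k=\overline{p^kT_p(X)}$ by a filtration built from an increasing, divisibility-cofinal sequence of integers. First I record that, by Lemma~\ref{l:fg} together with completeness, every cyclic subgroup of $X$ has compact closure, so $X$ is a union of compact subgroups; hence Lemmas~\ref{l:Tp-dense2} and~\ref{l:Tp-comp} apply and $\overline{nX}=\overline{\sum_p p^{v_p(n)}T_p(X)}$ for every $n$. Using $\w$-narrowness I fix a countable skeleton $Z=\{z_k\}_{k\in\w}$ with $X=ZW_k$ for all $k$. I then set $G_0=X$ and $G_k=\overline{n_kX}$ for an increasing sequence $n_k\mid n_{k+1}$ to be chosen, so that $G_\w=\bigcap_kG_k=\overline{\w X}$ is compact by Lemma~\ref{l:converge}; choosing the $n_k$ cofinal in the divisibility order makes condition~(4) of Lemma~\ref{l:Hausdorff} follow from the second assertion of Lemma~\ref{l:converge}, while a decreasing symmetric $(W_k)$ with $W_{k+1}^2\subset W_k$ gives condition~(1) for free.

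The engine for condition~(2) is the cross-prime analogue of Claim~\ref{cl:modulo-p}: for every $k$ there is a prime $q_k$ with $n_kX$ not precompact modulo $\overline{n_kq_kX}$. Indeed, if $n_kX$ were precompact modulo $\overline{n_kqX}$ for \emph{every} prime $q$, then Lemma~\ref{l:modulo} and finite composition of ``precompact modulo'' would give $n_kX$ precompact modulo $\overline{n_kNX}$ for every $N$; combined with Lemma~\ref{l:converge} (choose $N$ with $\overline{n_kNX}\subset\overline{\w X}U$) and the compactness of $\overline{\w X}$, this would force $n_kX$ to be totally bounded, i.e.\ $X$ to have compact exponent $n_k$, contrary to hypothesis. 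I then put $n_{k+1}=n_kq_k(k+1)$: the factor $k+1$ secures divisibility-cofinality, and since shrinking the modulus only strengthens non-precompactness, $n_kX$ stays not precompact modulo $\overline{n_{k+1}X}$. Shrinking $W_k$ I arrange that $n_kX$ is not $W_k^3\cdot n_{k+1}X$-bounded, exactly as in Lemma~\ref{l:p-local}.

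With these data I apply Lemma~\ref{l:seq} to obtain $(x_m)_{m\in\w}$ in $G_0$ satisfying condition~(3), and Lemma~\ref{l:Hausdorff} to conclude that $(x_m)$ is $\diamondsuit$-Hausdorff and that $\{x_m\}_{m\in\w}$ is totally bounded in $X_\diamond=(X,\tau_\diamond)$, hence has an accumulation point $x_\infty\in\bar X_\diamond$. It remains to show $x_\infty^n\notin X_\diamond$ for every $n\in\IN$. Since $X$ has no compact exponent, $\overline{nX}$ is non-compact, so $\overline{nX}\not\subset\overline{\w X}=\bigcap_kG_k$, and as the $G_k$ decrease to the compact $\overline{\w X}$ there is $k$ with $\overline{nX}\not\subset G_k$ (whence $n_k\nmid n$). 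Assuming $x_\infty^n\in X_\diamond$, I locate $x_\infty^n$ near some $z_l$, pick a nearby $x_\lambda^n$ with $\lambda$ large, and expand $x_\lambda^ny^{-1}=x_1^{\delta_1}\cdots x_m^{\delta_m}$ with $\sum_i\delta_i=n$ and $|\delta_i|<2^{i+1}$. Because $n_k\nmid\sum_i\delta_i$, some $\delta_i$ is not divisible by $n_k$; taking the largest such index $j$ and letting $k'\le k$ be the least level with $n_{k'}\nmid\delta_j$, I get $x_i^{\delta_i}\in n_kX\subset G_{k'}$ for $i>j$ and $\delta_jG_0\not\subset G_{k'}$, so $x_j^{\delta_j}\in F_jW_{k'}G_{k'}$, contradicting condition~(3).

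I expect the main obstacle to be the verification of condition~(2) of Lemma~\ref{l:Hausdorff} \emph{uniformly over all} $n$, not merely for the graded values $n=n_k$. Unlike the localized situation of Lemma~\ref{l:p-local}, where $\overline{nT_p(X)}$ literally equals some $G_k$, here $\overline{nX}$ need not coincide with any $G_k$, and for $n$ sharing few prime factors with $n_k$ the set $nX$ could a priori be precompact modulo $G_k$ without being contained in it. Ruling this out requires that the forced non-precompactness of $n_kX$ modulo $\overline{n_{k+1}X}$ be genuinely \emph{spread} over the infinitely many primes responsible for the failure of compact exponent, so that it is inherited by every relevant $nX$; this spreading, together with the telescoping of the consecutive non-precompactness and the standing hypothesis that each $T_p(X)$ has compact exponent, is the technical heart of the argument.
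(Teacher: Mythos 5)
Your overall strategy (build a filtration $(G_k)$, feed it to Lemmas~\ref{l:seq} and \ref{l:Hausdorff}, then run the $\delta_j$-argument against condition (3)) is the same as the paper's, but the filtration $G_k=\overline{n_kX}$ does not deliver condition (2) of Lemma~\ref{l:Hausdorff}, and you concede as much in your final paragraph: the dichotomy ``$nG_0\subset G_k$ or $nG_0$ is not $W_k^2G_k$-bounded'' must hold for \emph{every} $n\in\IN$, whereas your choice of the primes $q_k$ only controls the graded values $n=n_k$. This is not a technical loose end but the actual content of the lemma; without it neither Key Lemma applies. A second, related defect appears at the end: $n_{k'}\nmid\delta_j$ does not imply $\delta_jG_0\not\subset G_{k'}=\overline{n_{k'}X}$, because for a prime $p$ dividing $n_{k'}$ to a higher power than it divides $\delta_j$, the extra layer of $T_p(X)$ contributed by $\delta_jX$ may still lie in $\overline{n_{k'}X}$ (for instance when both powers exceed $s_p$); so the contradiction with condition (3) is not forced.

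The paper closes exactly this gap by a different choice of filtration. It first proves that the set $\{p:s_p>0\}$ is \emph{infinite}, where $s_p$ is minimal with $p^{s_p}T_p(X)$ precompact (were it finite, $sX$ with $s=\prod_p p^{s_p}$ would be precompact, contradicting the hypothesis), and that $S=\sum_{p}p^{s_p}T_p(X)$ is precompact. It then selects an increasing sequence of primes $p_n$ with $s_{p_n}>0$, puts $S_p(X)=p^{s_p-1}T_p(X)$ (not precompact, but $p\,S_p(X)$ precompact), $K=\overline{\w X}\cdot\bar S$ (compact), and $G_k=\overline{K\cdot\sum_{n\ge k}S_{p_n}(X)}$. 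Each layer $S_{p_i}(X)$ then behaves dichotomically under $x\mapsto x^n$: if $p_i\mid n$ then $nS_{p_i}(X)\subset p_i^{s_{p_i}}T_{p_i}(X)\subset K$, which every $G_k$ swallows; if $p_i\nmid n$ then $p_i$-singularity gives $\overline{nS_{p_i}(X)}=\overline{S_{p_i}(X)}$, which is not $KW_i^3$-bounded while $G_k\subset KW_i^3$ for $k>i$. This yields Claim~\ref{cl:div} ($nG_0\subset G_k$ iff every $p_i$ with $i<k$ divides $n$) and hence condition (2) for all $n$ at once --- precisely the ``spreading over infinitely many primes'' you correctly identified as the missing ingredient but did not supply. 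Your ``precompact modulo'' telescoping is a sound observation, but it is neither sufficient for condition (2) nor needed once the filtration is built from the layers $S_{p_n}(X)$.
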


\begin{proof}
By Lemma~\ref{l:converge}, the subgroup $\overline{\w X}=\bigcap_{n\in\IN}\overline{nX}$ is compact and for every neighborhood $U\subset X$ of the unit there exists a number $n\in\IN$ such that $\overline{nX}\subset \overline{\w X}U$.

By our assumption, for every prime number $p$ the $p$-component $T_p(X)$ of $X$ has precompact exponent. Consequently, there exists $s_p\in\w$ such that $p^{s_p}T_p(X)$ is precompact. We can assume that the number $s_p$ is the smallest possible. If $s_p=0$ then the subgroup $T_p(X)$ is precompact. If $s_p>0$, then $p^{s_p}T_p(X)$ is precompact but $p^{s_p-1}T_p(X)$ is not.

\begin{claim}\label{cl:s-comp} The subgroup $S=\sum_{p\in\IP}p^{s_p}T_p(X)$ is precompact.
\end{claim}

\begin{proof}
Given a closed neighborhood $U\subset X$ of the unit, choose a neighborhood $W\subset X$ of the unit such that $W^2\subset U$. By the hypocompactness of the exponent of $X$, there exist $m\in\IN$ and a finite set $F_1\subset X$ such that $\overline{mX}\subset F_1W$. For every $p\notin \IP_m$ the $p$-locality of the subgroup $T_p(X)$ implies that $T_p(X)=\overline{mT_p(X)}\subset \overline{mX}$. Since $\overline{mX}$ is a subgroup of $X$, we also get $\sum_{p\in\IP\setminus \IP_m}T_p(X)\subset \overline{mX}\subset F_1W.$

The precompactness of the groups $p^{s_p}T_p(X)$ for $p\in\IP_m$ implies the precompactness of their (finite) sum $\sum_{p\in\IP_m}p^{s^p}T_p(X)$. So, we can find a finite subset $F_2\subset X$ such that $\sum_{p\in\IP_m}p^{s^p}T_p(X)\subset F_2W$. Then for the finite set $F=F_1F_2$ we have the desired inclusion
$$S=\sum_{p\in\IP}p^{s_p}T_p(X)=\sum_{p\in\IP_n}p^{s_p}T_p(X)\cdot \sum_{p\in\IP\setminus P_n}p^{s_p}T_p(X)\subset F_1W\cdot F_2W=FW^2\subset FU,$$
witnessing that the subgroup $S$ is precompact.
\end{proof}

\begin{claim} The set $\IP_{>}=\{p\in\IP:s_p>0\}$ is infinite.
\end{claim}

\begin{proof} To derive a contradiction, assume that the set $\IP_{>}$ is finite. In this case we shall prove that for the number $s=\prod_{p\in\IP_>}p^{s_p}$ the set $sX$ is precompact, which will contradict the assumption of Lemma~\ref{l:P}.

Indeed, for every $p\in\IP$ the $p$-singularity of the group $T_p(X)$ implies that $\overline{sT_p(X)}=\overline{p^{s_p}T_p(X)}$. By Lemmas~\ref{l:fg} and \ref{l:Tp-dense}, the subgroup $\sum_{p\in\IP}T_p(X)$ is dense in $X$. By the continuity of the map $X\to X$, $x\mapsto x^s$, the subgroup $s(\sum_{p\in\IP}T_p(X))=\sum_{p\in\IP}sT_p(X)$ is dense in $sX$. Then the closed subgroup $\overline{\sum_{p\in\IP}\overline{sT_p(X)}}=
\overline{\sum_{p\in\IP}\overline{p^{s_p}T_p(X)}}=\bar S$ coincides with $\overline{sX}$. By Claim~\ref{cl:s-comp}, the subgroup $\bar S=\overline{sX}$ is compact, witnessing that $X$ has precompact exponent. But this contradicts the assumption of Lemma~\ref{l:P}.
\end{proof}

The compactness of the subgroups $\overline{\w X}$ and $\bar S$ imply that the compactness of the subgroup $K=\overline{\w X}\cdot\bar S$. To shorten notations, for every prime number $p\in\IP_{>}$ denote the subgroup $p^{s_p-1}T_p(X)$ by $S_p(X)$ and observe that it is not precompact but $pS_p(X)=p^{s_p}T_p(X)$ is precompact.

\begin{claim}\label{cl:PW} There exists a strictly increasing sequence $(p_n)_{n\in\w}$ of prime numbers and a decreasing sequence $(W_n)_{n\in\w}$ of neighborhoods of the unit in $X$ satisfying the following conditions for every $n\in\w$:
\begin{enumerate}
\item $W_n=W_n^{-1}$ and $W_{n+1}^2\subset W_n$;
\item $s^{p_n}>0$;
\item $S_{p_n}\!(X)$ is not $KW_n^3$-bounded;
\item $\sum_{p\ge  p_{n+1}}T_p(X)\subset \overline{\w X}\cdot W_n\subset KW_n$.
\end{enumerate}
\end{claim}

\begin{proof} The construction of the sequences $(p_n)_{n\in\w}$ and $(W_n)_{n\in\w}$ is inductive. To start the inductive construction, let $p_0$ be the smallest prime number with $s^{p_0}>0$. Since the group $K$ is compact and the subgroup $S_{p_0}\!(X)$ is not precompact, it is not $KW^3_0$-bounded for some neighborhood $W_0\subset X$ of the unit in $X$.

Assume that for some $n\in\IN$ the prime number $p_{n-1}$ and a neighborhood $W_{n-1}$ have been constructed. By the hypocompactness of the exponent of $X$ and Lemma~\ref{l:converge}, there exists a number $m\in\IN$ such that $\overline{mX}\subset \overline{\w X}\cdot W_{n-1}$. Choose any prime number $p_{n}\in \{p\in\IP:s_p>0\}$ such that $p_n>\max(\IP_m\cup\{p_{n-1}\})$. Then for any prime number $p\ge p_n$ the number $m$ is not divided by $p$ and by the $p$-singularity of the group $T_p(X)$ its subgroup $mT_p(X)$ is dense in $T_p(X)$.
Then $T_p(X)=\overline{mT_p(X)}\subset \overline{mX}$. Since $\overline{mX}$ is a subgroup, we also get $\prod_{p\ge p_n}T_p(X)\subset\overline{mX}\subset \overline{\w X}\cdot W_{n-1}$.

Since the group $\overline{\w X}$ is compact and the group $S_{p_n}\!(X)$ is not precompact, it is not $\overline{\w X}W_n^3$-bounded for some neighborhood $W_n\subset X$ of the unit such that $W_n=W_n^{-1}$ and $W_n^2\subset W_{n-1}$.
This completes the inductive step.
\end{proof}

Since the topological group $X$ is $\w$-narrow, there exists a countable subset $Z=\{z_k\}_{k\in\w}$ such that $X=ZW_k$ for all $k\in\w$.
For every $k\in\w$ let $G_k$ be the closure of the subgroup $K\cdot\sum_{n=k}^\infty S_{p_n}\!(X)$ in $X$.

\begin{claim}\label{cl:div} For numbers $n\in\IN$ and $k\in\w$ the inclusion $nG_0\subset G_k$ holds if and only if $\{p_i:i<k\}\subset\IP_n$.
\end{claim}

\begin{proof} If $\{p_i:i<k\}\subset \IP_n$, then $$n\sum_{i\in\w}S_{p_i}\!(X)=\sum_{i\in\w}nS_{p_i}\!(X)\subset \sum_{i<k}p_iS_{p_i}\!(X)\cdot \sum_{i\ge k}S_{p_i}\!(K)\subset K\cdot\sum_{i\ge k}S_{p_i}\!(X)\subset G_k$$and hence $nG_0=n\overline{\sum_{i\in\w}S_{p_i}\!(X)}\subset \bar G_k=G_k$.

Next, assume that for some $i<k$ the number $p_i$ does not divide $n$.
The $p_i$-singularity of the $p_i$-component $T_{p_i}\!(X)$ implies that $\overline{nS_{p_i}\!(X)}=\overline{S_{p_i}\!(X)}$ and then $\overline{nG_0}$ contains the subgroup $\overline{S_{p_i}\!(X)}$ which is not $KW_{i}^3$-bounded by condition (3) of Claim~\ref{cl:PW}.
 On the other hand, by the condition (4) of Claim~\ref{cl:PW}, the group $G_k=\overline{\sum_{j\ge k}KS_{p_j}\!(X)}$ is contained in $KW_k\subset KW_i^3$ and hence is $KW_i^3$-bounded. So, $\overline{nG_0}\not\subset G_k$.
\end{proof}

Let us show that the sequences $(W_k)_{k\in\w}$ and $(G_k)_{k\in\w}$ satisfy the conditions (1)--(4) of Lemma~\ref{l:Hausdorff}. The condition (1) coincides with the condition (1) in Claim~\ref{cl:PW} and hence is satisfied. To check the condition (2), fix two numbers $n\in\IN$, $k\in\w$, and assume that $nG_0\not\subset G_k$. By Claim~\ref{cl:div}, $n$ is not divisible by some prime number $p_i$ with $i<k$. Then $\overline{S_{p_i}\!(X)}=\overline{nS_{p_i}\!(X))}
 \subset \overline{nG_0}$. By the condition (3) of Claim~\ref{cl:PW}, the set $S_{p_i}(X)$ is not $KW_i^3$-bounded and hence $\overline{nG_0}$ is not $KW_i^3$-bounded and $nG_0$ is not $KW_k^2$-bounded (as  $i<k$ and $\overline{W_i^3}\subset W_i^4\subset W_k^2$).

Since $K\subset G_\w$, the conditions (3), (4) of Lemma~\ref{l:Hausdorff} will follow as soon as we check that for every neighborhood $U\subset X$ of the unit there exists $k\in\w$ such that $G_k\subset KU$. By Lemma~\ref{l:converge}, there exists $n\in\IN$ such that $\overline{nX}\subset\overline{\w X}\cdot U$.
Choose $k\in\IN$ so large that for every $i\ge k$ the prime number $p_i$ does not divide $n$ and hence $S_{p_i}\!(X)\subset T_{p_i}\!(X)=\overline{nT_{p_i}\!(X)}\subset \overline{nX}$. Then $$
G_k=\overline{\sum_{i\ge k}KS_{p_i}\!(X)}\subset K\cdot \overline{nX}\subset K{\cdot}\overline{\w X}{\cdot}U=KU.$$

Therefore the conditions (1)--(4) of Lemma~\ref{l:Hausdorff} are satisfies. Then the conditions (1),(2) of Lemma~\ref{l:seq} are satisfied, too. So, we can apply Lemmas~\ref{l:seq} and \ref{l:Hausdorff} and construct a $\diamondsuit$-Hausdorff sequence $\{x_k\}_{k\in\w}\subset G_0$ satisfying the condition (3),(4) of Lemma~\ref{l:seq} and such that the set $\{x_k\}_{k\in\w}$ is totally bounded in the topological group $X_\diamond=(X,\tau_\diamond)$ and hence has an accumulation point $x_\infty$ in the completion $\bar X_\diamond$ of the topological group $X_\diamond$.

\begin{claim} For every $n\in\IN$ the power $x_\infty^n\notin X_\diamond$.
\end{claim}

\begin{proof} To derive a contradiction, assume that $x^n_\infty\in X_\diamond$. Let $d\in\w$ be the smallest number such that the prime number $p_d$ does not divide $n$. 

Since $X=ZW_{d+1}$, there exists a number $l\in\w$ such that $x^n_\infty \in z_lW_{d+1}$. Let $\e=\frac1{2^l}$.
Since the sequence $(x^n_m)_{m\in\w}$ accumulates at $x_\infty^n$ in $X_\diamond$, the neighborhood $z_l\diamondsuit_\e W_{d+1}\in\tau_\diamond$ of $x_\infty^n$ contains a point $x^n_\lambda$ with $\lambda>\max\{l,n\}$. Consequently, $x^n_\lambda\in z_lyW_{d+1}$ for some $y\in \diamondsuit_\e$.
Write $y$ as $y=x_1^{\e_1}\cdots x_m^{\e_m}$ for some $m\ge \lambda$ and some integer numbers $\e_1,\dots,\e_m$ such that $\sum_{i=1}^m\e_i=0$ and $\sum_{i=1}^m\frac{|\e_i|}{2^i}<\e=\frac1{2^l}$. The last inequality implies that $\e_i=0$ for all $i\le l$ and $|\e_i|<2^i$ for all $i\le m$.

It follows that $x^n_\lambda y^{-1}\in z_lW_{d+1}$ and $x^n_\lambda y^{-1}=x_1^{\delta_1}\cdots x_m^{\delta_n}$ where $\delta_\lambda=n-\e_\lambda$ and $\delta_i=-\e_i$ for all $i\in\{1,\dots,m\}\setminus\{\lambda\}$. Also,  $|\delta_\lambda|=|n-\e_\lambda|\le n+2^\lambda<2^{\lambda+1}$ and hence $|\delta_i|<2^{i+1}$ for all $i\le m$.

Let $\mu\in\w$ be the smallest number for which there exists a number $i\le m$ such that $\delta_i$ is not divisible by the prime number $p_\mu$. Such number exists and is $\le d$ since the number $n=\sum_{i=1}^m\delta_i$ is not divisible by $p_d$. The minimality of $\mu$ guarantees that for every $i\le m$ the number $\delta_i$ is divisible by the number $q=\prod_{j<\mu}p_j$ and hence $x_i^{\delta_i}\in qG_0\subset G_\mu$ according to Claim~\ref{cl:div}.

Let $j\le m$ be the largest number for which $\delta_j$ is not divisible by $p_\mu$.  The maximality of $j$ guarantees that $\delta_i$ is divisible by $p_\mu$ for all $i\in(j,m]$.
It follows that $j>l$ (as $\delta_i=0$ for $i\le l$). Then $z_lW_{d+1}\ni x^n_\lambda y^{-1}=x_1^{\delta_1}\cdots x_m^{\delta_m}\in x_1^{\delta_1}\cdots x_{j}^{\delta_{j}}G_{\mu}$ and hence $x_j^{\delta_j}\in x_1^{-\delta_1}\cdots x_{j-1}^{-\delta_{j-1}}z_lW_{d+1}G_{\mu}\subset F_jW_{\mu}G_{\mu}$,
which contradicts the condition (3) of Lemma~\ref{l:seq} as $\delta_jX\not\subset G_\mu$ (by Claim~\ref{cl:div}).
\end{proof}
\end{proof}

\section{Treating topological groups which do not have hypocompact exponent}

The following lemma was proved by Ravsky~\cite{Ravsky2003}. We shall give an alternative proof of this lemma deriving it from Key Lemmas~\ref{l:seq} and \ref{l:Hausdorff}.

\begin{lemma}\label{l:non-hypo} If an Abelian topological group $X$ is not of hypocompact exponent, then $X$ contains a $\diamondsuit$-Hausdorff sequence $(x_m)_{m\in\w}$, which has an accumulation point $x_\infty\in \bar X_\diamond$ such that $x^n_\infty\notin X_\diamond$ for all $n\in\IN$.
\end{lemma}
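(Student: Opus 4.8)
The plan is to follow the same scheme that produced Lemmas~\ref{l:p-local} and \ref{l:P}, namely to manufacture a decreasing sequence $(W_k)_{k\in\w}$ of symmetric neighborhoods of the unit and a sequence $(G_k)_{k\in\w}$ of closed subgroups that verify the hypotheses (1)--(4) of Lemma~\ref{l:Hausdorff}, apply Lemma~\ref{l:seq} to obtain a sequence $(x_m)_{m\in\w}$ satisfying its condition (3), and then invoke Lemma~\ref{l:Hausdorff} to conclude that $(x_m)_{m\in\w}$ is $\diamondsuit$-Hausdorff with $\{x_m\}_{m\in\w}$ totally bounded in $X_\diamond$. The totally bounded set then has an accumulation point $x_\infty$ in the completion $\bar X_\diamond$, and the final claim to be verified is that $x_\infty^n\notin X_\diamond$ for all $n\in\IN$. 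The genuinely new input, compared with the previous two lemmas, is that now $X$ is \emph{not} of hypocompact exponent: there is a fixed neighborhood $U_0\subset X$ of the unit witnessing this failure, meaning that for \emph{every} $n\in\IN$ the set $nX$ is not $FU_0$-bounded for any finite $F$, i.e.\ no $nX$ is $U_0$-bounded.

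First I would reduce to a countable situation. Since the failure of hypocompact exponent is witnessed by the single neighborhood $U_0$, for every $n$ the set $nX$ fails to be $U_0$-bounded; shrinking $U_0$ to some $U$ with $UU^{-1}\subset U_0$ and applying Lemma~\ref{l:separ}, I can pass to a countable subgroup $Z=\{z_k\}_{k\in\w}\subset X$ on which, for every $n$, the set $nZ$ remains non-$U$-bounded. I would set $G_0=\bar Z$, or more conveniently simply work inside the closure of $Z$, so that $X$ is effectively replaced by a separable (hence $\w$-narrow) group in which \emph{every} $nG_0$ fails to be $W_0^2 G_0$-bounded for a suitable starting neighborhood $W_0$. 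The subgroups $G_k$ should all be taken trivial or bounded enough that the ``bad'' behaviour of $nG_0$ is visible at every level; the cleanest choice is to aim for $G_k=\{e\}$ (or $G_\w=\{e\}$) for all large $k$, so that condition (2) of Lemma~\ref{l:Hausdorff} reads simply ``$nG_0$ is not $W_k^2 G_k$-bounded'' for every $n$ and $k$, which holds by the non-boundedness just arranged. Conditions (1), (3) and (4) of Lemma~\ref{l:Hausdorff} are then immediate: (1) by construction of the $W_k$, and (3)--(4) trivially if $G_\w=\{e\}$ is compact and the $G_k$ collapse to it.

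With these data in place, Lemmas~\ref{l:seq} and~\ref{l:Hausdorff} deliver the $\diamondsuit$-Hausdorff sequence $(x_m)_{m\in\w}$ and its accumulation point $x_\infty\in\bar X_\diamond$. The final claim $x_\infty^n\notin X_\diamond$ would be argued exactly as in the corresponding claims of Lemmas~\ref{l:p-local} and~\ref{l:P}: assuming $x_\infty^n\in X_\diamond$, one locates $x_\infty^n$ inside some $z_l W_\ast$, captures a nearby power $x_\lambda^n\in z_l\diamondsuit_\e W_\ast$ with $\lambda$ large, writes the witnessing element of $\diamondsuit_\e$ as a controlled word $x_1^{\e_1}\cdots x_m^{\e_m}$ with $\sum\e_i=0$ and small weights, and examines the residual word $x_\lambda^n y^{-1}=x_1^{\delta_1}\cdots x_m^{\delta_m}$ whose exponents satisfy $\sum_i\delta_i=n$ and $|\delta_i|<2^{i+1}$. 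Since here $nG_0\not\subset G_k$ for \emph{every} $n$ and $k$, one selects the largest index $j$ with $x_j^{\delta_j}\notin G_k$ (which exists and exceeds $l$), and the defining relation forces $x_j^{\delta_j}\in F_j W_\ast G_k$, directly contradicting condition (3) of Lemma~\ref{l:seq}.

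The main obstacle I anticipate is the bookkeeping that guarantees the residual exponent $\delta_j$ really witnesses $\delta_j G_0\not\subset G_k$ at the index and level needed by Lemma~\ref{l:seq}(3). In the earlier lemmas the divisibility by a prime power $p^{d+1}$ or by $p_\mu$ gave a clean arithmetic criterion (via Claim~\ref{cl:div} and $p$-singularity) for which $\delta_j$ fall outside $G_k$; here, without any $p$-local structure, I must instead arrange that \emph{every} nonzero exponent detects non-boundedness, which is precisely why collapsing $G_\w$ to $\{e\}$ and keeping all $nG_0$ non-$W_k^2 G_k$-bounded is essential. The delicate point is to set up the $G_k$ so that the largest index $j$ with $x_j^{\delta_j}\notin G_k$ is guaranteed to exist for the relevant $k$; once that structural choice is made, the arithmetic estimates $|\delta_i|<2^{i+1}$ and $\delta_i=0$ for $i\le l$ are routine and mirror the verifications already carried out.
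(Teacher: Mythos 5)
Your proposal follows essentially the same route as the paper's proof: pass to a countable subgroup $Z$ on which every $nZ$ stays non-$W_0$-bounded, set $G_0=\bar Z$ and $G_k=\{e\}$ for $k>0$ so that condition (2) holds with the ``unbounded'' alternative for every $n$ and $k$, apply Lemmas~\ref{l:seq} and~\ref{l:Hausdorff}, and derive the final contradiction from $\sum_i\delta_i=n\neq0$. The only cosmetic difference is that the paper picks $j$ as the largest index with $\delta_j\neq0$ (whose existence is immediate from $\sum_i\delta_i=n$) rather than with $x_j^{\delta_j}\neq e$, which is the cleaner way to settle the ``delicate point'' you flag.
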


\begin{proof} Since $X$ is not of hypocompact exponent, there exists a neighborhood $W_0=W_0^{-1}$ of the unit such that for every $n\in\IN$ the set $nX$ is not $W_0^2$-bounded.

Choose a sequence $(W_k)_{k=1}^\infty$ of the unit such that $W_k^{-1}=W_k$ and $W_{k}^2\subset W_{k-1}$ for all $k\in\IN$.

\begin{claim} There exists a countable subgroup $Z=\{z_k\}_{k\in\w}$ of $X$ such that for every $n\in\IN$ the subgroup $nZ$ is not $W_0$-bounded.
\end{claim}

\begin{proof} Using Zorn Lemma, for every $n\in\IN$ choose a maximal subset $M_n\subset X$ such that for any distinct elements $x,y\in M_n$ we have $x^n\notin y^nW_0^2$. The maximality of $M_n$ implies that for every $x\in X$ there exists $y\in M_n$ such that $x^n\in y^nW_0^2$ or $y^n\in x^nW_0^2$ and hence $x^n\in y^nW_0^{-2}=y^nW_0^2$. Then $x^n\in y^nW_0^2\subset nM_n{\cdot}W_0^2$ and hence $nX\subset nM_n{\cdot}W_0^2$. Since the set $nX$ is not $W_0^2$-bounded, the set $M_n$ is infinite. So, we can choose a countable subgroup $Z=\{z_k\}_{k\in\w}\subset X$ such that for every $n\in\IN$ the intersection $Z\cap M_n$ is infinite.

We claim that for every $n\in\IN$ the subgroup $nZ$ is not $W_0$-bounded. Assuming the opposite, we could find a finite subset $F\subset X$ such that $nZ\subset FW_0$. Since the set $Z\cap M_n$ is infinite, for some $x\in F$ there are two distinct points $y,z\in Z\cap M_n$ such that $y^n,z^n\in xW_0$. Then $z^n\in xW_0\subset y^nW_0^{-1}W_0=y^nW_0^2$, which contradict the choice of the set $M_n$.
\end{proof}

Let $G_0=\bar Z$ and $G_k=\{e\}$ for all $k>0$.
It is clear that the sequences $(W_k)_{k\in\IN}$ and $(G_k)_{k\in\IN}$ satisfy the conditions (1),(2) of Lemma~\ref{l:seq} and (1)--(4) of Lemma~\ref{l:Hausdorff}. Applying these two lemmas, we obtain a $\diamondsuit$-Hausdorff sequence $(x_n)_{n\in\IN}$ satisfying the conditions (3),(4) of Lemma~\ref{l:seq} such that the set $\{x_n\}_{n\in\w}$ is totally bounded in the topological group $X_\diamond:=(X,\tau_\diamond)$ and hence has an accumulation point $x_\infty$ in the completion $\bar X_\diamond$ of $X_\diamond$.

\begin{claim} For every $n\in\IN$ the point $x^n_\infty$ does not belong to $X_\diamond$.
\end{claim}

\begin{proof}
Assuming that $x_\infty^n\in X_\diamond$, we would conclude that for every neighborhood $V\subset X$ of the unit the neighborhood $x_{\infty}^n\diamondsuit_1V\subset G_0V\in\tau_\diamond$ intersects the set $\{x^n_k\}_{k\in\w}\subset G_0$, which implies that $x^n_\infty\in G_0=\bar Z$. Then there exists a number $\lambda\in \w$ such that $x_\infty^n\in z_\lambda W_1$.

Let $\e=\frac1{2^\lambda}$.
Since the sequence $(x^n_m)_{m\in\w}$ accumulates at $x^n_\infty$ in the topological group $X_\diamond$, the neighborhood $z_l\diamondsuit_\e W_1\in\tau_\diamond$ of $x^n_\infty$ contains a point $x^n_k$ with $k>\max\{\lambda,n\}$. Consequently, $x^n_k\in z_\lambda y W_1$ for some $y\in \diamondsuit_\e$.
Write $y$ as $y=x_1^{\e_1}\cdots x_m^{\e_m}$ for some $m\ge k$ and some integer numbers $\e_1,\dots,\e_m$ such that $\sum_{i=1}^m\e_i=0$ and $\sum_{i=1}^m\frac{|\e_i|}{2^i}<\e$. The last inequality implies that $\e_i=0$ for all $i\le \lambda$.

It follows that $x^n_k y^{-1}\in z_\lambda W_1$ and $x^n_k y^{-1}=x_1^{\delta_1}\cdots x_m^{\delta_n}$ where $\delta_k=n-\e_k$ and $\delta_i=-\e_i$ for all $i\in\{1,\dots,m\}\setminus\{k\}$.
We claim that $|\delta_i|\le 2^{i+1}$ for all $i\le m$. For $i\ne k$ this follows from $|\delta_i|=|\e_i|<2^i$. For $i=k$, we have $|\delta_k|=|n-\e_k|\le n+|\e_k|\le k+2^k<2^{k+1}=2^{i+1}$.

Since $\sum_{i=1}^m\delta_i=n$, for some $i\le m$ the number $\delta_i$ is not equal zero. Let $j\le m$ be the largest number such that $\delta_j\ne 0$. It follows that $j>\lambda$ (as $\delta_i=0$ for $i\le \lambda$).
 Then $z_\lambda W_1\ni x^n_k y^{-1}=x_1^{\delta_1}\cdots x_m^{\delta_m}=x_1^{\delta_1}\cdots x_j^{\delta_j}$ and hence $x_j^{\delta_j}\in x_1^{-\delta_1}\cdots x_{j-1}^{-\delta_{j-1}}z_\lambda W_1\subset F_jW_1G_1$,
which contradicts the condition (3) of Lemma~\ref{l:seq} as $\max_{i\le j}|\delta_j|\le \max_{i\le j}2^{i+1}\le 2^{j+1}$ and $\delta_jX\not\subset G_1=\{e\}$.
\end{proof}
\end{proof}

\section{Joining pieces together}

In this section we combine Lemmas~\ref{l:p-local}, \ref{l:P}, \ref{l:non-hypo} and prove the ``unbounded'' part of Theorem~\ref{t:main}.
First observe that these three lemmas imply the following corollary.

\begin{corollary}\label{c:final} If an $\w$-narrow complete Abelian topological group $X$ does not have compact exponent, then $X$ admits a weaker Hausdorff group topology $\tau_\diamond$ such that the completion $\bar X_\diamond$ of the topological group $X_\diamond=(X,\tau_\diamond)$ contains an element $y\in\bar X_\diamond$ such that $y^n\notin X_\diamond$ for all $n\in\IN$.
\end{corollary}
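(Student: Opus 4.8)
The plan is to argue by a three-way case analysis, feeding each case into exactly one of Lemmas~\ref{l:p-local}, \ref{l:P}, and \ref{l:non-hypo}, each of which already manufactures the desired $\diamondsuit$-Hausdorff sequence. First I would dispose of the case in which $X$ fails to have hypocompact exponent: here Lemma~\ref{l:non-hypo} applies verbatim (it does not even require $\w$-narrowness). So from this point I may assume that $X$ has hypocompact exponent.

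Under that assumption I would split according to the behaviour of the $p$-components $T_p(X)$. If there is a prime $p$ for which $T_p(X)$ does \emph{not} have compact exponent, then the full hypothesis of Lemma~\ref{l:p-local} is met (recall $X$ is $\w$-narrow, complete, Abelian and of hypocompact exponent), and that lemma produces the sequence. Otherwise every $T_p(X)$ has compact exponent; since $T_p(X)$ is a closed subgroup of the complete group $X$ it is itself complete, so for it compact exponent and precompact exponent coincide, whence every $T_p(X)$ has precompact exponent. Together with the standing hypothesis that $X$ itself fails to have compact exponent, this is precisely the hypothesis of Lemma~\ref{l:P}, which again yields the sequence. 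These three cases are manifestly exhaustive, so in every case I obtain a $\diamondsuit$-Hausdorff sequence $(x_m)_{m\in\w}$ in $X$ together with an accumulation point $x_\infty\in\bar X_\diamond$ satisfying $x_\infty^n\notin X_\diamond$ for all $n\in\IN$.

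It then remains to repackage this into the asserted form. By construction a $\diamondsuit$-Hausdorff sequence determines the group topology $\tau_\diamond$ on $X$, which is Hausdorff precisely because the sequence is $\diamondsuit$-Hausdorff. This topology is weaker than the original topology $\Tau$: every basic $\tau_\diamond$-neighborhood of the unit has the form $T\cdot(\diamondsuit_\delta\cap B)$ with $T\in\Tau_e$, and since $e\in\diamondsuit_\delta\cap B$ this set contains the $\Tau$-open set $T$, so that every $\tau_\diamond$-neighborhood of $e$ is a $\Tau$-neighborhood and hence $\tau_\diamond\subseteq\Tau$. Finally I would set $y:=x_\infty\in\bar X_\diamond$, for which $y^n\notin X_\diamond$ for every $n\in\IN$; this is exactly the required conclusion.

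Since the three lemmas carry essentially all of the analytic content, I do not expect any serious difficulty here. The only points demanding attention are organizational: checking that the case distinction is genuinely exhaustive, and observing the (routine) equivalence of compact and precompact exponent for the closed subgroups $T_p(X)$ of the complete group $X$, which is what makes the negation of the hypothesis of Lemma~\ref{l:p-local} coincide with the hypothesis of Lemma~\ref{l:P}.
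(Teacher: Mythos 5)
Your proposal is correct and follows exactly the route the paper intends: the paper states Corollary~\ref{c:final} as an immediate consequence of Lemmas~\ref{l:p-local}, \ref{l:P} and \ref{l:non-hypo}, and your three-way case split (no hypocompact exponent; hypocompact exponent with some $T_p(X)$ lacking compact exponent; hypocompact exponent with all $T_p(X)$ of precompact exponent) is precisely the exhaustive decomposition that makes this work. Your additional checks --- that $\tau_\diamond$ is weaker than the original topology and that compact and precompact exponent coincide for the closed (hence complete) subgroups $T_p(X)$ --- are correct and fill in details the paper leaves implicit.
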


Now we can prove the promised ``unbounded'' part of Theorem~\ref{t:main}.

\begin{theorem}\label{t:unbound} For a complete Abelian topological group $X$ the following conditions are equivalent:
\begin{enumerate}
\item $X$ is complete and has compact exponent;
\item for any continuous homomorphism $f:X\to Y$ to a powertopological semigroup $Y$ and every point $y\in \overline{f(X)}\subset Y$ there exists a number $k\in\IN$  such that $y^k\in f(X)$;
\item for any injective continuous homomorphism $f:X\to Y$ to a topological group $Y$ and every point $y\in \overline{f(X)}\subset Y$ there exists a number $k\in\IN$ such that $y^k\in f(X)$.
\end{enumerate}
\end{theorem}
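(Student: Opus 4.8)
The plan is to establish the cycle $(1)\Rightarrow(2)\Rightarrow(3)\Rightarrow(1)$, with the first two implications short and the last carrying all the weight. For $(1)\Rightarrow(2)$ I would fix a compact exponent $n$ of $X$, so that $nX$ is totally bounded, and invoke the implication $(1)\Rightarrow(2)$ of Theorem~\ref{t:bound} with the closed subsemigroup $Z=X$: for every continuous homomorphism $f\colon X\to Y$ into a powertopological semigroup and every $y\in\overline{f(X)}$ one obtains $y^n\in f(X)$, so $k=n$ works. The implication $(2)\Rightarrow(3)$ is immediate, since every topological group is a powertopological semigroup and an injective homomorphism is a particular case of an arbitrary one.

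The heart of the matter is $(3)\Rightarrow(1)$, which I would prove contrapositively: assuming $X$ has no compact exponent, I must produce an injective continuous homomorphism $g\colon X\to Y$ into a topological group together with a point $\bar y\in\overline{g(X)}$ satisfying $\bar y^{k}\notin g(X)$ for all $k\in\IN$. Corollary~\ref{c:final} supplies exactly such data, but only under the extra hypothesis of $\w$-narrowness, so the first step is a reduction to a separable subgroup. Since $nX$ fails to be totally bounded for every $n$, I would, for each $n$, choose a symmetric neighborhood $U_n$ of $e$ with $nX$ not $U_nU_n^{-1}$-bounded and apply Lemma~\ref{l:separ} to get a countable subgroup whose intersection with $nX$ is not $U_n$-bounded; adjoining $n$-th roots of the elements of this intersection and amalgamating over all $n$ yields a single countable subgroup $Z$ with $nZ$ not totally bounded for every $n$. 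Then $H:=\bar Z$ is closed in $X$, hence complete, is separable and therefore $\w$-narrow, and still has no compact exponent. Applying Corollary~\ref{c:final} to $H$ gives a weaker Hausdorff group topology $\tau_\diamond$ on $H$ whose completion $\bar H_\diamond$ contains a point $y$ with $y^{k}\notin H_\diamond$ for all $k$; writing $f_H\colon(H,\tau|_H)\to\bar H_\diamond$ for the canonical map (continuous because $\tau_\diamond$ is weaker than $\tau|_H$, injective, with dense image $H_\diamond$), this is precisely a failure of condition $(3)$ for $H$.

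The main obstacle is transporting this failure from $H$ back to all of $X$: one cannot in general extend $f_H\colon H\to\bar H_\diamond$ to $X$, and naively combining $\tau_\diamond$ with the original topology of $X$ recovers $\tau|_H$ on $H$, destroying the weakening. My remedy is an amalgamation construction. I would set $Y:=(\bar H_\diamond\times X)/D$ with $D:=\{(f_H(h)^{-1},h):h\in H\}$; since $H$ is closed in $X$ and $f_H$ is continuous on $(H,\tau|_H)$, the subgroup $D$ is closed, so $Y$ is a Hausdorff Abelian topological group. The map $g\colon X\to Y$, $x\mapsto[(e,x)]$, is continuous, and it is injective because $(e,x)\in D$ forces $x\in H$ and $f_H(x)=e$, hence $x=e$. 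It then remains to verify the two properties of $\bar y:=[(y,e)]$: the density of $f_H(H)$ in $\bar H_\diamond$ together with the identity $[(f_H(h),e)]=[(e,h)]=g(h)$ gives $\bar y\in\overline{g(X)}$, while $[(y^{k},e)]=[(e,x)]$ would force $y^{k}=f_H(x)\in f_H(H)$, contradicting the choice of $y$, so $\bar y^{k}\notin g(X)$ for every $k$. This exhibits the required failure of $(3)$ for $X$ and completes the contrapositive, and hence the proof of the theorem.
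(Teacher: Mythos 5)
Your proposal is correct, and its skeleton coincides with the paper's: $(1)\Rightarrow(2)$ via Theorem~\ref{t:bound}, trivial $(2)\Rightarrow(3)$, and for $(3)\Rightarrow(1)$ a reduction via Lemma~\ref{l:separ} to a closed separable (hence $\w$-narrow) subgroup without compact exponent, to which Corollary~\ref{c:final} is applied. (Your explicit adjunction of $n$-th roots before amalgamating the countable subgroups is a detail the paper suppresses but does need, since Lemma~\ref{l:separ} controls $Z\cap nX$ rather than $nZ$.) The genuine divergence is in the transport step. The paper extends the weaker topology $\tau_\diamond$ from the subgroup to all of $X$ by declaring $\{UV:U\in\Tau_e,\ e\in V\in\tau_\diamond\}$ a neighborhood base of a new group topology $\tau$ on $X$, then passes to the completion $\bar X_\tau$; this keeps the homomorphism equal to the identity map of $X$ but requires one to check the Pontryagin axioms, Hausdorffness, that $Z$ stays closed, and that the induced topology on $Z$ is $\tau_\diamond$. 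You instead form the pushout $Y=(\bar H_\diamond\times X)/D$ along the anti-graph $D$ of $f_H$, whose closedness follows cleanly from the closedness of $H$ and the continuity of $f_H$, and all remaining verifications (injectivity of $g$, $\bar y\in\overline{g(X)}$, $\bar y^k\notin g(X)$) are routine algebra; the price is that the target is no longer a retopologization of $X$ itself, which is harmless since condition $(3)$ only asks for some injective continuous homomorphism into a topological group. Both routes are valid; yours trades the paper's topological bookkeeping for a standard amalgamation of topological groups and is, if anything, easier to verify rigorously.
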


\begin{proof} The implication $(1)\Ra(2)$ follows from Theorem~\ref{t:bound} and $(2)\Ra(3)$ is trivial. To prove that $(3)\Ra(1)$, assume that  the compact Abelian topological group $X$ does not have compact exponent. Then for every $n\in\IN$ the set $nX$ is not precompact. Using Lemma~\ref{l:separ}, we can find a closed separable subgroup $Z\subset X$ such that for every $n\in\IN$ the subgroup $nZ$ is not precompact, which means that $Z$ does not have compact exponent. By Corollary~\ref{c:final}, $Z$ admits a weaker Hausdorff group topology $\tau_\diamond$ such that the completion $\bar Z_\diamond$ of the topological group $Z_\diamond=(Z,\tau_\diamond)$ contains an element $z\in\bar Z_\diamond$ such that $z^n\notin Z_\diamond$ for all $n\in\IN$.

Let $\Tau_e$ be the family of all open neighborhoods of the unit in the topological group $X$. It is easy to se that the family
$$\tau_e=\{UV:U\in\Tau_e,\;e\in V\in\tau_\diamond\}$$ satisfies the Pontryagin Axioms \cite[1.3.12]{AT} and hence is a neighborhood base at the unit of some Hausdorff group topology $\tau$ in which the subgroup $Z$ remains closed and the subspace topology on $Z$ inherited from $(X,\tau)$ coincides with the topology $\tau_\diamond$. Then the completion $\bar X_\tau$ of the topological group $X_\tau:=(X,\tau)$ contains the completion $\bar Z_\diamond$ of the topological group $Z_\diamond$ and $\bar Z_\diamond\cap X_\tau=Z_\diamond$. Consequently, the element $z\in \bar Z_\diamond\subset \bar X_\tau$ has the required property: $z^n\notin X_\tau$ for all $n\in\IN$.
\end{proof}

\section*{Acknowledgements}

The author expresses his thanks to Alex Ravsky for fruitful discussions on the topic of this paper and to Michael Megrelishvili for helpful  remarks and comments on the text. Special thanks are due to Dikran Dikranjan who told to the author the genuine story of the proof of Prodanov-Stoyanov Theorem from \cite{DPS} (which differs a bit from the original proof of this theorem in \cite{PS}) and suggested to dedicate this paper to the memory of Ivan Prodanov who died of heart attack at the age of 49 --- precisely the age of the author at the moment of writing this paper.


\begin{thebibliography}{}

\bibitem{AT} A.~Arhangel'skii, M.~Tkachenko, {\em  Topological groups and related structures},  Atlantis Studies in Mathematics, 1. Atlantis Press, Paris; World Scientific Publishing Co. Pte. Ltd., Hackensack, NJ, 2008.

\bibitem{BaR2001}
T.~Banakh, A.~Ravsky,
\emph{On $H$-closed paratopological groups},
The Ukrainian Congress of Mathematics, Kyiv, 2001.

\bibitem{D98} D.~Dikranjan, {\em Recent advances in minimal topological groups}, Topology Appl. {\bf 85}:1-3 (1998), 53--91.

\bibitem{DM2010} D.~Dikranjan, M.~Megrelishvili,  {\em Relative minimality and co-minimality of subgroups in topological groups}, Topology Appl. {\bf 157}:1 (2010), 62--76.

\bibitem{DM} D.~Dikranjan, M.~Megrelishvili, {\em Minimality conditions in topological groups}, Recent Progress in General Topology III, Hart, K.P., van Mill, Jan, Simon, P (Eds.)
Springer Verlag (Atlantis Press), Berlin, (2014), 229--327.

\bibitem{DPS} D. Dikranjan, I. Prodanov, L. Stoyanov,
{\em Topological Groups: Characters Dualities and Minimal Group Topologies}, (2nd edn.),
 Monographs and Textbooks in Pure and Applied Mathematics, Vol. {\bf 130}, Marcel Dekker, New York (1989).

\bibitem{DSh} D.~Dikranjan, D.~Shakhmatov,
{\em Selected topics from the structure theory of topological groups},
 in E. Pearl, ed., Open Problems in Topology 2, Elsevier (2007),  389--406.

\bibitem{DU} D.~Dikranjan, V.V.~Uspenskij, {\em Categorically compact topological groups}, J. Pure Appl. Algebra, {\bf 126} (1998), 149--168.

\bibitem{Eng}
R.~Engelking, \emph{General Topology}, 2nd ed., Heldermann,
Berlin,
1989.

\bibitem{Folner} E.~F\o lner, {\em Generalization of a theorem of Bogoliouboff to topological abelian groups}, Math. Scand. {\bf 2} (1954) 5--18.

\bibitem{Guran} I.I.~Guran, {\em Topological groups similar to Lindel\"of groups}, Dokl. Akad. Nauk SSSR {\bf 256}:6 (1981), 1305--1307.


\bibitem{Meg} M.~Megrelishvili, {\em Generalized Heisenberg groups and Shtern's question}, Georgian Math. J. {\bf 11}:4 (2004), 775--782.

\bibitem{Pater} A.~Paterson, {\em Amenability}, Amer. Math. Soc., Providence, RI, 1988.

\bibitem{PS}
I. Prodanov, L.N. Stojanov, {\em Every minimal abelian group is precompact},
 C. R. Acad. Bulgar. Sci., {\bf 37} (1), (1984), 23--26.

\bibitem{Raikov1946}
D.A.~Raikov, \emph{On a completion of topological groups}, Izv.
Akad. Nauk SSSR \textbf{10}:6 (1946), 513--528 (in
Russian).

\bibitem{Ravsky2003}
A. Ravsky,
\emph{On $H$-closed paratopological groups},
Visnyk Lviv Univ., Ser. Mekh.-Mat. \textbf{61}, (2003), 172--179.

\bibitem{Rob} D.~Robinson, {\em A course in the theory of groups}, Springer-Verlag, New York, 1996.

\bibitem{RD}
W. Roelcke, S. Dierolf, {\em Uniform structures on topological groups and their quotients},
McGrawHill, 1981.


\end{thebibliography}
\end{document}